\tikzset{decorate sep/.style 2 args=
{decorate,decoration={shape backgrounds,shape=circle,shape size=#1,shape sep=#2}}}
\theoremstyle{plain} \newtheorem{Thm}{Theorem}[section]
\theoremstyle{plain} \newtheorem{Cor}[Thm]{Corollary}
\theoremstyle{plain} \newtheorem{Prop}[Thm]{Proposition}
\theoremstyle{plain} \newtheorem{Lemma}[Thm]{Lemma}
\theoremstyle{definition} 
\theoremstyle{definition} \newtheorem{Rem}[Thm]{Remark}
\theoremstyle{definition} 
\theoremstyle{definition} \newtheorem{Ex}[Thm]{Example}
\newcommand{\thmlist}{
\renewcommand{\theenumi}{\alph{enumi}}
\renewcommand{\labelenumi}{(\theenumi)}}
\renewcommand{\Re}{\mathop{\rm{Re}}}
\renewcommand{\Im}{\mathop{\rm{Im}}}
\newcommand{\id}{\mathop{\rm{id}}}
\newcommand{\ad}{\mathop{\rm{ad}}}
\newcommand{\Ker}{\mathop{\rm{ker}}}
\newcommand{\End}{\mathop{\rm{End}}}
\newcommand{\inner}[2]{\langle#1,#2\rangle}
\newcommand{\C}{\ensuremath{\mathbb C}}
\newcommand{\D}{\ensuremath{\mathbb D}}
\newcommand{\R}{\ensuremath{\mathbb R}}
\newcommand{\Z}{\ensuremath{\mathbb Z}}
\newcommand{\N}{\ensuremath{\mathbb N}}
\renewcommand{\l}{\lambda}
\renewcommand{\a}{\alpha}
\renewcommand{\b}{\beta}
\newcommand{\fa}{\mathfrak{a}}
\newcommand{\fm}{\mathfrak{m}}
\newcommand{\la}{\l_\a}
\newcommand\SO{\mathop{\rm{SO}}}
\newcommand\SU{\mathop{\rm{SU}}}
\newcommand\U{\mathop{\rm{U}}}
\newcommand\Sg{\mathop{\rm{S}}}
\newcommand\Sp{\mathop{\rm{Sp}}}
\newcommand\Spin{\mathop{\rm{Spin}}}
\newcommand{\frakacs}{\mathfrak a_{\C}^*}
\newcommand{\polya}{{\rm S}(\mathfrak{a}_\C)}
\newcommand{\rml}{{\rm l}}
\newcommand{\rmm}{{\rm m}}
\newcommand{\rms}{{\rm s}}
\newcommand{\tauwell}{(\ell,\well)}
\newcommand{\wt}{\widetilde}
\newcommand{\well}{\widetilde{\ell}}
\renewcommand{\phi}{\varphi}
\begin{document}

\makeatletter
\title[Hypergeometric functions of type $BC$]{Hypergeometric functions of type $BC$\\ and standard multiplicities}

\author{E. K. Narayanan}
\address{Department of Mathematics, Indian Institute of Science, Bangalore-12, India.}
\email{{naru@iisc.ac.in}}

\author{A. Pasquale}
\address{Institut Elie Cartan de Lorraine (UMR CNRS 7502),
Universit\'{e} de Lorraine, F-57045 Metz, France.}
\email{angela.pasquale@univ-lorraine.fr}

\thanks{The authors thank both the referees for a careful reading of the manuscript and pointing out some mistakes. The authors gratefully acknowledge financial support by the
Indo-French Centre for the Promotion of Advanced Research (IFCPAR/CEFIPRA),
Project No. 5001--1: ``Hypergeometric functions: harmonic analysis and representation theory''. The first named author thanks SERB, India for the financial support through MATRICS grant MTR/2018/00051.}

\date{}
\subjclass[2010]{Primary: 33C67; secondary: 43A32, 43A90}
\keywords{}

\begin{abstract}
We study the Heckman-Opdam hypergeometric functions associated to a root system of type $BC$ and a multiplicity function which is allowed to assume some non-positive values (a standard multiplicity function). For such functions, we obtain positivity properties and sharp estimates  which imply a characterization of the bounded hypergeometric functions. As an application, our results extend known properties of Harish-Chandra's spherical functions on Riemannian symmetric spaces of the non-compact type $G/K$ to spherical functions over homogeneous vector bundles on $G/K$ which are associated to certain small $K$-types.
\end{abstract}

\maketitle

\tableofcontents
\section*{Introduction}

Let $G$ be a connected non-compact real semisimple Lie group with finite center, $K$ a maximal compact subgroup of $G$, and $X=G/K$ the corresponding Riemannian symmetric space of the non-compact type. Harish-Chandra's theory of spherical functions on $X$ has been extended into two different directions. 

The first direction is Heckman-Opdam's theory of hypergeometric functions on root systems. It originated from the fact that, by restriction to a maximally flat subspace of $X$, 
the $K$-invariant functions on $X$ become Weyl-group-invariant functions on the corresponding Cartan subspace $\mathfrak{a}$.
The analogue of Harish-Chandra's spherical functions are the hypergeometric functions associated with root systems, also known as Heckman-Opdam's hypergeometric functions. Heckman and Opdam introduced them in the late 1980s and, in the middle 1990s, Cherednik contributed in simplifying their original definition; see \cite{HS94,OpdamActa}, as well as  \cite{Anker} for a recent overview of the theory. In the context of special functions associated with root systems, the symmetric space $X$ is replaced by a triple $(\mathfrak{a},\Sigma,m)$, where $\mathfrak{a}$ is a Euclidean real vector space (playing the role of the Cartan subspace of $X$), $\Sigma$ is a 
root system in the real dual space $\mathfrak{a}^*$ of $\mathfrak{a}$, and $m:\Sigma\to \C$ is a multiplicity function, i.e. a function on $\Sigma$ which is invariant with respect to the Weyl group $W$ of $\Sigma$. If $\mathfrak{a}$ is a Cartan subspace of the symmetric space $X$, 
$\Sigma$ is the corresponding system of (restricted) roots and the multiplicity function $m$ is given by the dimensions of the root spaces, then Heckman-Opdam's hypergeometric functions associated with $(\mathfrak{a},\Sigma,m)$ are precisely the restrictions to $\mathfrak{a}$ of Harish-Chandra's spherical functions. Here and in the following, we adopt the convention of identifying the Cartan subspace $\mathfrak{a}$ with its diffeomorphic image $A=\exp(\mathfrak{a})$ inside $G$.
 
Notice that, even if the multiplicity function in a triple $(\mathfrak{a},\Sigma,m)$ might be generally complex-valued, most of the properties of Heckman-Opdam's hypergeometric functions and also the harmonic analysis associated with them, are known assuming that the multiplicity functions have values in $[0,+\infty)$.
This is of course the most natural setting to include Harish-Chandra's spherical harmonic analysis on Riemannian symmetric spaces.

The second direction extending Harish-Chandra's theory of spherical functions concerns the so-called $\tau$-spherical functions, where $\tau$ is a unitary irreducible representation of $K$. These functions already appeared in the work of Godement \cite{Go52} and Harish-Chandra \cite{HC51a, HC51b}, and they agree with 
Harish-Chandra's spherical functions on $X$ when $\tau$ is the trivial representation. 
They have been studied either in the context of the representation theory of $G$ or in relation to  the harmonic analysis on homogeneous vector bundles over $X$. Among the references related to the present paper, we mention \cite{HC60, Tak74, ShimenoPlancherel, HS94, O94, Ca97, Ped98, DP, OS17, RS18, HOS}. 
Several new features appear when $\tau$ is non-trivial. For instance, the algebra $\mathbb{D}(G/K;\tau)$ of invariant differential operators acting on the sections of the homogeneous vector bundle, might not be commutative. 
Let $V_\tau$ be the space of $\tau$ and let $L^1(G//K;\tau)$ denote the convolution algebra of $\End(V_\tau)$-valued functions $f$ on $G$ satisfying $f(k_1gk_2)=\tau(k_2^{-1})f(g)\tau(k_2^{-1})$ for all $g\in G$ and $k_1, k_2\in K$. It is a classical result that 
$\mathbb{D}(G/K;\tau)$ is commutative if and only if $L^1(G//K;\tau)$ is commutative. In this case, $(G,K,\tau)$ is said to be a Gelfand triple. A convenient criterion to check whether
$(G,K,\tau)$ is a Gelfand triple is due to Deitmar \cite{D90}. Namely,  $\mathbb{D}(G/K;\tau)$ is commutative if and only if the restriction of $\tau$ to $M$, the centralizer in $K$ of the Cartan subspace $\mathfrak{a}$, is multiplicity free.
This happens for instance when $\tau|_M$ is irreducible, i.e. $\tau$ is a small $K$-type. For example, a $1$-dimensional representation $\tau$ is necessarily a small $K$-type. 
For Gelfand triples, the theory of $\tau$-spherical functions can be set up exactly as in the case of $\tau$ trivial. For instance, the $\tau$-spherical functions can be equivalently defined either as joint eigenfunctions of $\mathbb{D}(G/K;\tau)$ or as characters of $L^1(G//K;\tau)$. Nevertheless, they are much more difficult to handle than in the trivial case, and many of their properties are still not known. 

The present paper is situated at a crossing of the two directions of extensions of Harish-Chandra's theory of spherical functions on $X$ mentioned above. It finds its motivations in Shimeno's paper \cite{ShimenoPlancherel}, in Heckman's chapter \cite[Chapter 5]{HS94} and, more generally, in the recent paper \cite{OS17} by Oda and Shimeno
on $\tau$-spherical functions corresponding to small $K$-types. When $\tau$ is a small $K$-type, a $\tau$-spherical function is uniquely determined by its restriction to a Cartan subspace $\mathfrak{a}$ of $G$. By Schur's lemma, this restriction is of the form $\varphi \cdot \id$ where 
$\id$ is the identity on $V_\tau$ and $\varphi$ is function on $\mathfrak{a}$ which is Weyl-group 
invariant. 
The main theorem of \cite{OS17} proves that, up to multiplication by an explicit non-vanishing smooth $\cosh$-like factor, the function $\varphi$ is a Heckman-Opdam's hypergeometric function. It  
corresponds to a triple $(\mathfrak{a},\Sigma(\tau),m(\tau))$ in which $\Sigma(\tau)$ 
 is possibly not the root system associated with the symmetric space $X$ and 
$m(\tau)$ need not be positive.  
This motivates a systematic study of Heckman-Opdam's hypergeometric functions corresponding to multiplicity functions which may assume negative values. 

In this paper we take up this line of investigations in the case of the so-called standard multiplicities. 
More precisely, let $\Sigma$ be a root system in $\mathfrak{a}^\ast$ of type $BC_r$, where 
$r$ is the dimension of $\mathfrak{a}$, and let $m = (m_\rms, m_\rmm, m_\rml)$ be a multiplicity function on $\Sigma$. Set
$$
\mathcal{M}_1=\{(m_\rms,m_\rmm,m_\rml)\in \mathcal{M}:  m_\rmm> 0, m_\rms> 0, m_\rms+2m_\rml> 0\}\,.
$$
In \cite[Definition 5.5.1]{HS94}, the elements of $\mathcal{M}_1$ were called 
the \textit{standard multiplicities}.
Standard multiplicities have been introduced by Heckman and their definition is linked to the regularity of Harish-Chandra's $c$-function; see \cite[Lemma 5.5.2]{HS94} and \eqref{eq:b-nonzero} below. 

All positive multiplicity functions are standard, but standard multiplicites also allow negative values for the long roots. Because of this, the arguments leading to the known properties of the hypergeometric functions corresponding to positive multiplicity functions do not apply to this case. By suitable modifications of their proofs, we show in this paper that 
various results, including positivity properties and estimates for hypergeometric functions, extend to standard multiplicities (see Proposition \ref{prop:positivity-estimates-M3}, Theorem \ref{thm:real} and Theorem \ref{thm:bddhyp}). 

Let $\mathfrak{a}_\C^*$ denote the complex dual vector space of $\mathfrak a$ and consider the Heckman-Opdam's (symmetric and non-symmetric) hypergeometric functions $F_\lambda(m,x)$ and $G_\lambda(m,x)$ with $\lambda \in \mathfrak{a}_\C^*$; see subsection \ref{subsection:hyp} for the definitions.
Under the condition that $m\in \mathcal{M}_1$ we prove:

\begin{enumerate}
\setlength{\itemsep}{1mm}
\item $F_\lambda$ and $G_\lambda$ are real and strictly positive on $\mathfrak{a}$ for all $\l\in \mathfrak{a}^*$,
\item $|F_\lambda|\leq F_{\Re\lambda}$ and $|G_\lambda|\leq G_{\Re\lambda}$
on $\mathfrak{a}$ for all $\l\in \mathfrak{a}^*_\C$,
\item $\max\{|F_\lambda(x)|, |G_\lambda(x)|\}\leq \sqrt{|W|} e^{\max_{w\in W}(w\l)(x)}$ for all $\lambda \in \mathfrak{a}^*_\C$ and $x\in
\mathfrak{a}$,
\item $F_{\lambda+\mu}(x)\leq F_{\mu}(x) e^{\max_{w\in W}(w\l)(x)}$
and $G_{\lambda+\mu}(x)\leq G_{\mu}(x) e^{\max_{w\in W}(w\l)(x)}$ for all
$\lambda \in \mathfrak{a}^*$, $\mu \in \overline{(\mathfrak{a}^*)^+}$ and $x\in
\mathfrak{a}$,
\item asymptotics on $\mathfrak{a}$ for $F_\lambda$ when $\lambda\in  \mathfrak{a}^*_\C$ is fixed but not necessarily regular,
\item sharp estimates on $\mathfrak{a}$  for $F_\lambda$ when $\lambda \in \mathfrak{a}^*$ is fixed but not necessarily regular.
\end{enumerate}

The estimates (but not the asymptotics) pass by continuity to the boundary of
$\mathcal{M}_1$ as well. The above properties extend to $\mathcal{M}_1$ the corresponding properties proved for nonnegative multiplicities by Opdam \cite{OpdamActa}, Ho and \'Olafsson \cite{HoOl14}, Schapira \cite{Sch08}, R\"osler, Koornwinder and Voit  \cite{RKV13}, and the authors and Pusti \cite{NPP}. 

The set of real-valued multiplicities on which both function $F_\lambda$ and $G_\lambda$ are naturally defined is
$$
\mathcal{M}_0=\{(m_\rms,m_\rmm,m_\rml)\in \mathcal{M}: m_\rmm\geq 0, m_\rms+m_\rml\geq 0\}\,.
$$
Clearly, $\mathcal{M}_1\subset \mathcal{M}_0$. In section \ref{section:basic} we introduce and discuss other subsets of $\mathcal{M}$. For instance, the estimates (3) hold in fact for a larger set of multiplicities than $\mathcal{M}_1$; see Lemma \ref{lemma:OpdamM2M3}.

Let $\rho(m)\in \mathfrak{a}^*$ be the half-sum of the positive roots in $\Sigma$, 
counted with their multiplicities; see \eqref{eq:rho}. Moreover, let $C(\rho(m))$ 
denote the convex hull of the finite set $\{w\rho(m):w\in W\}$.
Suppose first that the triple $(\mathfrak{a},\Sigma,m)$ is associated with a symmetric space $X$
and consider Harish-Chandra's parametrization of the spherical functions by the elements of
$\mathfrak{a}_\C^*$. Let $\varphi_\lambda$ denote the spherical function corresponding to 
$\l\in \mathfrak{a}_\C^*$. 
The spherical functions which are bounded can be identified with the elements of the spectrum of the (commutative) convolution algebra of $L^1$ $K$-invariant functions on $X$. They have been determined by the celebrated theorem of Helgason and Johnson \cite{HeJo}. It states that the spherical function $\varphi_\l$ on $X$ is bounded if and only 
if $\l$ belongs to the tube domain in $\mathfrak a_\C^*$ over $C(\rho)$. It is a fundamental result in the $L^1$ spherical harmonic analysis on $X$. For instance, it implies that the spherical transform of a $K$-invariant $L^1$ function on $X$ is holomorphic in the interior
of the tube domain over $C(\rho)$. In particular, the spherical transform of an $L^1$ function cannot have compact support, unlike what happens for the classical Fourier transform.

The theorem of Helgason and Johnson has been recently extended to Heckman-Opdam's hypergeometric functions in \cite{NPP} under the assumption that the multiplicity function 
$m$ is positive. On the boundary of $\mathcal{M}_1$ there are nonzero multiplicities $m$ for which $\rho(m)=0$. However, $\rho(m)\neq 0$ for $m\in \mathcal{M}_1$. It is therefore 
natural to ask if Helgason-Johnson's characterization of boundedness by means of the
tube domain over $C(\rho)$ holds for Heckman-Opdam's hypergeometric functions associated with arbitrary standard multiplicity functions $m\in \mathcal{M}_1$ and not only to the positive ones. The answer to this question is positive and is given by Theorem \ref{thm:bddhyp}.

Let $\mathcal{M}_+$ denote the set of nonnegative multiplicity functions on a fixed root system of type $BC$.
In section \ref{section:applications} we consider the 2-parameter deformations of $m=(m_\rms,m_\rmm,m_\rml)\in \mathcal{M}_+$ of the form
$$
m(\ell, \well)=(m_\rms+2\ell, m_\rmm + 2\well,m_\rml-2\ell)\,,
$$
where $\ell,\well\in \R$. The corresponding non-symmetric and symmetric $(\ell,\well)$-hypergeometric functions are respectively defined for $\lambda\in \mathfrak{a}_\C^*$ and $x\in \mathfrak{a}$ by 
\begin{eqnarray*}
G_{\ell,\well,\lambda}(m;x)&=&u(x)^{-\ell} v(x)^{-\well} G_\lambda(m(\ell,\well);x)\,, \\
F_{\ell,\well,\lambda}(m;x)&=&u(x)^{-\ell} v(x)^{-\well} F_\lambda(m(\ell,\well);x)\,, 
\end{eqnarray*}
where $u(x)$ and $v(x)$ are suitable products of hyperbolic cosine functions depending on the roots; see 
\eqref{eq:u} and \eqref{eq:v}. Given $m=(m_\rms,m_\rmm,m_\rml)\in \mathcal{M}_+$, then $m(\ell,\well)\in \mathcal{M}_1$ if and only if $-\frac{m_\rms}{2}< \ell <\frac{m_\rms}{2}+m_\rml$ and $\well> -\frac{m_\rmm}{2}$. For these values, the results proved in section \ref{section:basic} for 
$G_\lambda(m(\ell,\well))$ and $F_\lambda(m(\ell,\well))$ extend to corresponding results for the 
$(\ell,\well)$-hypergeometric functions, but some care is needed to take the factors $u^{-\ell}$ and $v^{-\well}$ into account. In particular, Theorem \ref{thm:bdd} characterizes the $F_{\ell,\well,\lambda}$'s which are bounded. This theorem as well as other properties extends to all $\ell$'s for which 
$\big| \ell-(m_\rml-1)/2\big| < (m_\rms+m_\rml+1)/2$ thanks to the symmetry relation 
$F_{\ell,\well,\lambda}=F_{-\ell+m_\rml-1,\well,\lambda}$ proved in \eqref{symmetry}.

The final section of this paper, section \ref{section:geometric}, is devoted to geometric examples based on \cite{HS94,ShimenoPlancherel} and \cite{OS17}. Namely, we prove that the $\tau$-spherical functions on $G/K$ associated with a small $K$-type
$\tau$ and for which the root system of $G/K$ is of type $BC$ can be described as symmetric $(\ell,\well)$-hypergeometric functions on the root system $\Sigma(\tau)$ of \cite{OS17} and specific choices of a multiplicity function $m\in \mathcal{M}_+$ and of the deformation parameters $(\ell,\well)$. Our general results from section \ref{section:applications} apply therefore to these cases. 

\section{Notation and preliminaries} \label{section:notation}

In this section we collect the basic notation and some preliminary results. We refer to \cite{HS94} and \cite{NPP} for a more extended exposition.
\smallskip

If $F(m)$ is a function on a space $X$ that depends on a parameter $m$, we will denote its value at $x\in X$ by $F(m;x)$ rather than $F(m)(x)$.
Given two nonnegative functions $f$ and $g$
on a same domain $D$, we write $f \asymp g$ if there exist
positive constants $C_1$ and $C_2$ so that $C_1 \leq
\frac{f(x)}{g(x)} \leq C_2$ for all $x \in D$.

\subsection{Root systems}
\label{subsection:roots}

Let $\mathfrak{a}$ be an $r$-dimensional real Euclidean vector space
with inner product $\inner{\cdot}{\cdot}$, and let $\mathfrak{a}^*$ be
the dual space of $\mathfrak{a}$. For $\l\in \mathfrak{a}^*$ let $x_\l \in
\mathfrak a$ be determined by the condition that
$\l(x)=\inner{x}{x_\l}$ for all $x \in \mathfrak a$. The assignment
$\inner{\l}{\mu}:=\inner{x_\l}{x_\mu}$ defines an inner product in
$\mathfrak{a}^*$. Let $\mathfrak{a}_\C$ and $\mathfrak{a}_\C^*$ respectively
denote the complexifications of $\mathfrak{a}$ and $\mathfrak{a}^*$. The
$\C$-bilinear extension to $\mathfrak{a}_\C$ and $\mathfrak{a}_\C^*$ of the
inner products on $\mathfrak{a}^*$ and $\mathfrak{a}$ will also be indicated
by $\inner{\cdot}{\cdot}$.
We denote by  $|\cdot|=\inner{\cdot}{\cdot}^{1/2}$ the associated norm.
We shall often employ the notation
\begin{equation}\label{eq:la}
  \la:=\frac{\inner{\l}{\a}}{\inner{\a}{\a}}
\end{equation}
for elements $\l,\a\in \fa_\C^*$ with $|\a|\neq 0$.

Let $\Sigma$ be a root system in $\mathfrak{a}^*$
with set of positive roots of the form
$\Sigma^+=\Sigma^+_\rms \sqcup \Sigma^+_\rmm \sqcup \Sigma^+_\rml$, where
\begin{equation}
\label{eq:roots}
\Sigma^+_\rms=\big\{\frac{\beta_j}{2}: 1\leq j \leq r\big\}\,,  \notag\, \quad
\Sigma^+_\rmm=\big\{\frac{\beta_j\pm \beta_i}{2}: 1\leq i < j \leq r\big\}\,,\quad
\Sigma^+_\rml=\{\beta_j: 1\leq j \leq r\} \,.
\end{equation}
The positive Weyl chamber $\mathfrak{a}^+$ consists of the elements  $x\in\mathfrak{a}$ for which
$\a(x)>0$ for all $\a \in \Sigma^+$; its closure is
$\overline{\mathfrak{a}^+}=\{x \in \mathfrak{a}: \text{$\a(x) \geq 0$ for all $\a \in \Sigma^+$}\}$.
Dually, the positive Weyl chamber $(\mathfrak{a}^*)^+$ consists of the elements  $\l\in\mathfrak{a}^*$ for which
$\inner{\l}{\a}>0$ for all $\a \in \Sigma^+$. Its closure is denoted $\overline{(\mathfrak{a}^*)^+}$.

We assume that the elements of $\Sigma^+_\rml$ form an orthogonal basis of $\fa^*$ and that they all have the same norm $p$.
We denote by $W$ the Weyl group of $\Sigma$. 
It acts on the roots by permutations and sign changes. For  $a\in \{\rms,\rmm,\rml\}$ set $\Sigma_a=\Sigma^+_a \sqcup (-\Sigma^+_a)$.  
A multiplicity function on $\Sigma$ is a $W$-invariant function on $\Sigma$. It is therefore given by a triple $m=(m_\rms,m_\rmm,m_\rml)$ of complex numbers so that $m_{a}$ is the (constant) value of $m$ on $\Sigma_{a}$ for ${a}\in \{\rms,\rmm,\rml\}$.

It will be convenient to refer to a root system $\Sigma$ as above as of type $BC_r$ even if some values of $m$ are zero. This means that root systems of type $C_r$ will be considered as being of type $BC_r$ with $m_\rms=0$ and $m_\rmm\neq 0$. Likewise, the direct products of rank-one root systems $(BC_1)^r$ and $(A_1)^r$ will be considered of type $BC_r$ with $m_\rmm=0$ and with $m_\rmm=m_\rms=0$, respectively.

The dimension $r$ of $\mathfrak{a}$ is called the \emph{(real) rank} of
the triple $(\mathfrak{a}, \Sigma, m)$. Finally, we set
\begin{equation}
\label{eq:rho}
 \rho(m)=\frac{1}{2} \sum_{\a \in \Sigma^+} m_\a \a=\frac{1}{2}  \sum_{j=1}^r \Big(\frac{m_\rms}{2}+m_\rml+(j-1)m_\rmm\Big) \beta_j \in \fa^*\,.
\end{equation}

\begin{Ex}[Geometric multiplicities]
\label{rem:hermitian}
For special values of the multiplicities $m=(m_\rms,m_\rmm,m_\rml)$, triples $(\mathfrak{a}, \Sigma, m)$ as above appear as restricted root systems of a large family of irreducible symmetric spaces $G/K$ of the non-compact type.  Among these spaces, a remarkable class consists of the noncompact Hermitian symmetric spaces; see \cite{He1} or \cite{ShimenoPlancherel}. For these spaces $m_\rml=1$.  Their root systems and multiplicity functions are listed in Table \ref{table:Hermitian}. The literature on Hermitian symmetric spaces refers to the situations where $\Sigma=C_r$ and $\Sigma=BC_r$ as to the \textit{Case I} and the \textit{Case II}, respectively.

\begin{table}[h]
\setlength{\extrarowheight}{2pt}
\begin{tabular}{|c|c|c|c|}
\hline
$G$ &$K$ & $\Sigma$ &$(m_\rms,m_\rmm,m_\rml)$ \\[.2em]
\hline\hline
$\SU(p,q)$ & $\Sg(\U(p)\times \U(q))$ &
\begin{tabular}{l}$p=q$: $C_p$ \\$p<q$: $BC_p$ \end{tabular}
&\begin{tabular}{l} $p=q$: $(0,2,1)$\\ $p<q$: $(2(q-p),2,1)$\end{tabular}  \\[.2em]
\hline
$\SO_0(p,2)$ &$\SO(p)\times\SO(2)$ & $C_2$ &$(0,p-2,1)$ \\[.2em]
\hline
$\SO^*(2n)$ & $\U(n)$ & \begin{tabular}{l} $n$ even: $C_n$\\
$n$ odd: $BC_n$ \end{tabular}  & \begin{tabular}{l} $n$ even: $(0,4,1)$\\ $n$ odd: $(4,4,1)$\end{tabular} \\[.2em]
\hline
$\Sp(n,\R)$ &$\U(n)$ & $C_n$ &$(0,1,1)$ \\[.2em]
\hline
$\mathfrak{e}_{6(-14)}$ &$\Spin(10)\times\U(1)$ & $BC_2$ &$(8,6,1)$ \\[.2em]
\hline
$\mathfrak{e}_{7(-25)}$ &$\ad(\mathfrak{e}_6)\times \U(1)$ & $C_3$ &$(0,8,1)$ \\[.2em]
\hline
\end{tabular}
\bigskip

\caption{Root multiplicities of irreducible non-compact Hermitian symmetric spaces}
\label{table:Hermitian}
\end{table}

In the following, the triples $m=(m_\rml,m_\rmm,m_\rms)$ appearing as root multiplicities of Riemannian symmetric spaces of the non-compact type and root system of type $BC$ will be called \textit{geometric multiplicities}.

Other examples of geometric multiplicities will be considered in Section \ref{section:geometric}.
\end{Ex}

Notice that in this paper we adopt the notation commonly used in the theory of symmetric spaces. It differs from the notation in the work of Heckman and Opdam in the following ways. The root system $R$ and the multiplicity function $k$ used by Heckman and Opdam
are related to our $\Sigma$ and $m$ by the relations $R=\{2\a:\a \in \Sigma\}$ and $k_{2\a}=m_\a/2$ for $\a \in \Sigma$.

\subsection{Cherednik operators and hypergeometric functions}
\label{subsection:hyp}
In this subsection, we review some basic notions on the hypergeometric functions associated with root systems. This theory has been developed by Heckman, Opdam and Cherednik. We refer the reader to \cite{HS94}, \cite{OpdamActa}, and \cite{Opd00} for more details and further references.

Let $\polya$ denote the symmetric algebra over $\mathfrak{a}_\C$ considered as the
space of polynomial functions on $\frakacs$, and let $\polya^W$ be the
subalgebra of $W$-invariant elements.
Every $p \in \polya$ defines a
constant-coefficient differential operators $p(\partial)$ on
$A_\C$ and on $\mathfrak{a}_\C$
such that $\xi(\partial)=\partial_\xi$ is the directional derivative in the
direction of $\xi$ for all $\xi \in \mathfrak{a}$. The algebra of the differential operators $p(\partial)$
with $p \in \polya$ will also be indicated by $\polya$.

Set $\mathfrak{a}_{\rm reg}=\{x\in \mathfrak{a}: \text{$\alpha(x)\neq 0$ for all $\alpha\in \Sigma$}\}$. 
Let $\mathcal{R}$ denote the algebra of functions on 
$\mathfrak{a}_{\rm reg}$
generated by $1$ and
\begin{equation}
\label{eq:galpha}
g_\a=(1-e^{-2\a})^{-1} \qquad (\a\in \Sigma^+)\,.
\end{equation}
Notice that for $\xi\in \fa$ and $\a\in \Sigma^+$ we have
\begin{eqnarray}
\partial_\xi g_\a&=&2\a(\xi)  \big(g_\a^2-g_\a\big) \notag\\
\label{eq:fminusa}
g_{-\a}&=&1-g_\a\,.
\end{eqnarray}
Hence $\mathcal{R}$ is stable under the actions of  $S(\fa_\C)$ and of the Weyl group.
Let $\D_\mathcal{R}=\mathcal{R}\otimes S(\fa_\C)$ be the algebra of differential operators on $\mathfrak{a}_{\rm reg}$ with coefficients in $\mathcal{R}$. The Weyl group $W$ acts on $\D_\mathcal{R}$
according to
\begin{equation*}
w\big(\phi\otimes p(\partial)):=w\phi \otimes (wp)(\partial).
\end{equation*}
We indicate by $\D_\mathcal{R}^W$ the subalgebra of $W$-invariant elements of $\D_\mathcal{R}$.
The space $\D_\mathcal{R} \otimes \C[W]$ can be naturally endowed with the structure of an associative algebra.

The differential component of a differential-reflection operator $P \in \D_{\mathcal R}\otimes \C[W]$ is the differential operator $\beta(P)\in\D_{\mathcal R}$ such that
\begin{equation}
\label{eq:beta}
Pf=\beta(P)f\,.
\end{equation}
for all $f\in \C[A_\C]^W$. If $P=\sum_{w\in W} P_w\otimes w$ with $P_w\in \D_{\mathcal R}$,
then $\beta(P)=\sum_{w\in W} P_w$.

For $\xi \in \mathfrak{a}$ the Cherednik operator (or Dunkl-Cherednik operator\/)  $T_\xi(m)\in \D_\mathcal{R} \otimes \C[W]$
is defined by
\begin{equation}
\label{eq:T}
  T_\xi(m):=
\partial_\xi-\rho(m)(\xi)+\sum_{\a\in \Sigma^+} m_\a \a(\xi) (1-e^{-2\a})^{-1}
\otimes (1-r_\a)
\end{equation}
%
The Cherednik operators commute with each other (cf.
\cite{OpdamActa}, Section 2). Therefore the map  $\xi \to T_\xi(m)$
on $\mathfrak{a}$ extends uniquely to an algebra homomorphism $p
\to T_p(m)$ of $\polya$ into $\D_\mathcal{R} \otimes \C[W]$.
If $p\in \polya^W$, then $D_p(m):=\beta\big(T_p(m)\big)$ turns out to be in $\D_\mathcal{R}^W$.

Let $p_L \in \polya^W$ be defined by $p_L(\l):=\inner{\l}{\l}$ for $\l \in \frakacs$.
Then $T_{p_L}(m)=\sum_{j=1}^r T_{\xi_j}(m)^2$, where  $\{\xi_j\}_{j=1}^r$ is any orthonormal basis of
$\fa$, is called the Heckman-Opdam Laplacian. Explicitly,
\begin{equation}
\label{eq:HOLaplacian}
T_{p_L}(m)=L(m)+\inner{\rho(m)}{\rho(m)}-\sum_{\a\in \Sigma^+} m_\a \frac{\inner{\a}{\a}}{\sinh^2\alpha} \otimes (1-r_\a)\,,
\end{equation}
where
\begin{equation}
    \label{eq:Laplm}
  L(m):=L_{\mathfrak a}+\sum_{\a \in \Sigma^+} m_\a \,\coth\a \;
       \partial_{x_\a}\,,
\end{equation}
$L_{\mathfrak a}$ is the Laplace operator on $\mathfrak a$, and
$\partial_{x_\a}$ is the directional derivative in the direction
of the element $x_\a\in \fa$  corresponding to $\a$ in the
identification of $\fa$ and $\fa^*$ under $\inner{\cdot}{\cdot}$,
as at the beginning of subsection \ref{subsection:roots}.  See
\cite[(1)]{Sch08} and \cite[\S 2.6]{SchThese} for the computation
of \eqref{eq:HOLaplacian}. In \eqref{eq:HOLaplacian} and
(\ref{eq:Laplm}) we have set 
\begin{eqnarray}
\label{eq:sinh-alpha}
\sinh \a&=&\frac{e^\a-e^{-\a}}{2}=\frac{e^\a}{2}\, (1-e^{-2\a})\,,\\
\label{eq:coth-alpha}
\coth \a&=&\frac{1+e^{-2\a}}{1-e^{-2\a}}=\frac{2}{1-e^{-2\a}}-1\,.
\end{eqnarray}
Moreover,
$$D_{p_L}(m) =L(m)+ \inner{\rho(m)}{\rho(m)}\,.$$

Set $\D(m):=\{D_p(m):p \in \polya^W\}$. The map $\gamma(m): \D(m)\rightarrow \polya^W$ defined by
\begin{equation}
  \label{eq:HChomo}
\gamma(m)\big(D_p(m)\big)(\l):=p(\l)
\end{equation}
is called the Harish-Chandra homomorphism.
It defines an algebra isomorphism of $\D(m)$ onto $\polya^W$ (see \cite[Theorem 1.3.12 and Remark 1.3.14]{HS94}). From Chevalley's theorem it therefore follows that
$\D(m)$ is generated by $r (=\dim \mathfrak{a})$ elements. The next lemma will play a decisive role for us.

\begin{Lemma}
\label{lemma:commutatorL}
For every multiplicity function $m$, the algebra $\D(m)$ is the centralizer of $L(m)$ in $\D_\mathcal{R}^W$.
\end{Lemma}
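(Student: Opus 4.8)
The plan is to prove the two inclusions $\D(m)\subseteq\comm(L(m))$ and $\comm(L(m))\subseteq\D(m)$ separately, where $\comm(L(m))$ denotes the centralizer of $L(m)$ in $\D_{\mathcal R}^W$. The first inclusion is the easy direction. Since $D_{p_L}(m)=L(m)+\inner{\rho(m)}{\rho(m)}$ differs from $L(m)$ only by a scalar, an operator commutes with $L(m)$ if and only if it commutes with $D_{p_L}(m)$. Now every element of $\D(m)$ has the form $D_p(m)$ for some $p\in\polya^W$, and these arise as differential components of the operators $T_p(m)$. Because the Cherednik operators $T_\xi(m)$ commute with one another, the whole algebra they generate is commutative; in particular $T_p(m)$ and $T_{p_L}(m)$ commute inside $\D_{\mathcal R}\otimes\C[W]$. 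First I would argue that taking differential components $\beta$ of commuting $W$-invariant operators preserves commutativity, so that $D_p(m)$ commutes with $D_{p_L}(m)=L(m)+\inner{\rho(m)}{\rho(m)}$, and hence with $L(m)$. This gives $\D(m)\subseteq\comm(L(m))$.

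For the reverse and harder inclusion, I would exploit the Harish-Chandra homomorphism $\gamma(m):\D(m)\to\polya^W$, which is an algebra isomorphism. The strategy is to take an arbitrary $D\in\D_{\mathcal R}^W$ commuting with $L(m)$ and show it must lie in $\D(m)$. The natural tool is the radial behavior of these operators together with their action on the hypergeometric functions $F_\lambda$, which for generic regular $\lambda\in\frakacs$ are joint eigenfunctions of all of $\D(m)$ with eigenvalues $D_p(m)F_\lambda=p(\lambda)F_\lambda$. If $D$ commutes with $L(m)$, then $D$ preserves each joint eigenspace of the commuting family $\D(m)$; since for generic $\lambda$ these eigenspaces are spanned by $F_\lambda$, one deduces that $F_\lambda$ is also an eigenfunction of $D$, say $DF_\lambda=c(\lambda)F_\lambda$. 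Analyzing the leading exponential asymptotics of $F_\lambda$ (the Harish-Chandra series with leading term $e^{(\lambda-\rho)(x)}$) lets one read off $c(\lambda)$ as a $W$-invariant polynomial in $\lambda$, whence $c=\gamma(m)(D_q(m))$ for some $D_q(m)\in\D(m)$. Then $D$ and $D_q(m)$ act identically on all $F_\lambda$, and by density/separation of the $F_\lambda$ this forces $D=D_q(m)\in\D(m)$.

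The main obstacle will be justifying that $D$ preserves the one-dimensionality of the generic joint eigenspaces and that the resulting eigenvalue $c(\lambda)$ depends polynomially (and $W$-invariantly) on $\lambda$. This requires care: one must know that the joint eigenspace of $\D(m)$ in the relevant function space (e.g. solutions of the system on $\mathfrak a_{\rm reg}$ with prescribed moderate growth, or formal series solutions) is exactly one-dimensional for $\lambda$ in a Zariski-dense set, and that commuting with the single operator $L(m)$ suffices to commute with the entire algebra $\D(m)$ generated by it. The cleanest route is to use the fact that $L(m)$ alone, together with $W$-invariance, already determines the full commutative algebra through the Harish-Chandra series: the indicial/leading-term analysis shows any $W$-invariant differential-reflection operator commuting with $L(m)$ acts as a scalar on each $e^{(\lambda-\rho)(x)}$-type solution, and matching leading exponents across the Weyl group orbit pins down the eigenvalue as an element of $\polya^W$.

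I would therefore organize the argument as: (i) reduce commuting with $L(m)$ to commuting with $D_{p_L}(m)$; (ii) establish $\D(m)\subseteq\comm(L(m))$ from commutativity of the Cherednik operators; (iii) for the converse, use the joint eigenfunctions $F_\lambda$ and their leading asymptotics to attach to any $D\in\comm(L(m))$ a $W$-invariant polynomial eigenvalue $c\in\polya^W$; (iv) invoke bijectivity of $\gamma(m)$ to find $D_q(m)\in\D(m)$ with the same symbol; and (v) conclude $D=D_q(m)$ by separation of eigenfunctions. The delicate point throughout is the asymptotic/indicial analysis underpinning step (iii), which is exactly where the interplay between the differential-reflection structure and the Harish-Chandra homomorphism must be handled rigorously.
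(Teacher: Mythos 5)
A preliminary caveat on the comparison: the paper does not prove this lemma at all --- its ``proof'' is the citation of \cite[Theorem 1.3.12 and Remark 1.3.14]{HS94}. So the only question is whether your outline would actually close the argument. Your easy inclusion $\D(m)\subseteq\comm(L(m))$ is fine: Cherednik commutativity, multiplicativity of $\beta$ on operators preserving $W$-invariant functions, and injectivity of the restriction map from $\D_{\mathcal R}^W$ to operators on $W$-invariant functions give exactly that. The converse, however, has real gaps as written.

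First, your primary formulation is circular: $[D,L(m)]=0$ only says that $D$ preserves eigenspaces of the single operator $L(m)$ --- the solution space of one second-order PDE in $r$ variables, which is far from one-dimensional --- not the joint eigenspaces of the whole family $\D(m)$; preserving those is essentially the statement being proved. Your ``cleanest route'' repairs this correctly: since $D$ has coefficients in $\mathcal{R}$, it maps formal series $\sum_{\mu\in2\Lambda}a_\mu e^{(\l-\rho(m)-\mu)}$ to series of the same type, and for generic $\l$ the space of such formal solutions of $L(m)u=(\inner{\l}{\l}-\inner{\rho(m)}{\rho(m)})u$ is one-dimensional, whence $D\Phi_\l=c(\l)\Phi_\l$ with $c$ polynomial. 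But the next step --- the $W$-invariance of $c$, which is precisely what you need before invoking bijectivity of $\gamma(m)$ onto $\polya^W$ --- is left as ``matching leading exponents across the Weyl group orbit'', and that is not an argument: the $\Phi_{w\l}$, $w\in W$, are linearly independent eigenfunctions of $L(m)$ with eigenvalues $c(w\l)$ that formal computations in the chamber $\fa^+$ cannot relate to one another. Relating them is the heart of the proof; it is done classically by a regularity/wall-crossing argument, or avoided altogether by a symbol argument: $[D,L(m)]=0$ forces the principal symbol $\sigma_D(x,\xi)$ to Poisson-commute with $\inner{\xi}{\xi}$, hence to be constant along lines $x+t\xi$; letting $t\to\infty$ (where the coefficients in $\mathcal R$ have limits) shows $\sigma_D$ is a polynomial in $\xi$ alone, $W$-invariance of $D$ puts it in $\polya^W$, and one subtracts the corresponding Cherednik operator and inducts on the order. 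Finally, two inaccuracies: $\D(m)$ is \emph{not} ``generated by'' $L(m)$ (it is a polynomial algebra on $r$ generators), and the lemma is asserted for \emph{every} multiplicity function, whereas $F_\l(m)$ and the $c$-function expansion that your steps (iii)--(v) rely on exist only for $m$ in the regular set $\mathcal{M}_{F,{\rm reg}}$ of Appendix \ref{appendix:A}; so a route through $F_\l$ cannot give the lemma in its stated generality, while the formal-series and symbol arguments do.
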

\begin{proof}
This is \cite[Theorem 1.3.12]{HS94}. See also \cite[Remark 1.3.14]{HS94} for its extension to
complex-valued multiplicities.
\end{proof}

Let $\l \in \frakacs$ be fixed.
The Heckman-Opdam hypergeometric function with spectral parameter $\l$ is the
unique $W$-invariant analytic solution $F_\l(m)$ of the system of differential equations
\begin{equation}
  \label{eq:hypereq1}
D_p(m) f=p(\l)f \qquad (p \in \polya^W),
\end{equation}
which satisfies $f(0)=1$.
The non-symmetric hypergeometric function with spectral parameter $\l$ is the
unique analytic solution $G_\l(m)$ of the system of differential equations
\begin{equation}
  \label{eq:hypereq2}
T_\xi(m) g=\l(\xi) g \qquad (\xi \in \fa),
\end{equation}
which satisfies $g(0)=1$.
These functions are linked by the relation
\begin{equation}
\label{eq:F-G}
F_\l(m;x)=\frac{1}{|W|}\, \sum_{w\in W} G_\l(m;w^{-1}x) \qquad (x \in \fa)\,,
\end{equation}
where $|W|$ denotes the cardinality of $W$.
The existence of the hypergeometric functions requires suitable assumptions on $m$. We refer the reader to Appendix \ref{appendix:A} for additional information.

For geometric multiplicities, $\D(m)$ coincides with the algebra
of radial components of the $G$-invariant differential operators
on $G/K$.
Moreover, $F_\l(m)$ agrees with the restriction to $A\equiv
\fa$  of Harish-Chandra's spherical function on $G/K$ with
spectral parameter $\l$. A geometric intepretation of the
functions $G_\l(m)$ has been recently given in \cite{Oda14}.


\section{Hypergeometric functions associated with standard multiplicities}
\label{section:basic}

In this section we look at the hypergeometric functions associated with standard multiplicites. Basic estimates and a characterization of the bounded hypergeometric functions (for these sets of multiplicity functions) are established.

Let $\mathcal{M}$ denote the set of real-valued multiplicity functions $m=(m_\rms,m_\rmm,m_\rml)$ on a root system $\Sigma$ of type $BC_r$.
We will consider the following subsets of $\mathcal{M}$:
\begin{eqnarray}
\label{eq:M+}
\mathcal{M}_+&=&\{m\in \mathcal{M}:
\text{$m_\a\geq 0$ for every $\alpha\in\Sigma$}\}\,,\\
\label{eq:M0}
\mathcal{M}_0&=&\{(m_\rms,m_\rmm,m_\rml)\in \mathcal{M}: m_\rmm\geq 0, m_\rms+m_\rml\geq 0\}\,,\\
\label{eq:M1}
\mathcal{M}_1&=&\{(m_\rms,m_\rmm,m_\rml)\in \mathcal{M}:  m_\rmm> 0, m_\rms> 0, m_\rms+2m_\rml> 0\}\,,\\
\label{eq:M2}
\mathcal{M}_2&=&\{(m_\rms,m_\rmm,m_\rml)\in \mathcal{M}: m_\rmm\geq 0, m_\rml\geq 0, m_\rms+m_\rml\geq 0\}\,,\\
\label{eq:M3}
\mathcal{M}_3&=&\{(m_\rms,m_\rmm,m_\rml)\in \mathcal{M}: m_\rmm\geq 0, m_\rml\leq 0, m_\rms +2m_\rml\geq 0\}\,.
\end{eqnarray}
So $\mathcal{M}_+$ consists of the non-negative multiplicity functions
and $\mathcal{M}_1=(\mathcal{M}_+\cup \mathcal{M}_3)^0$, the interior of
$\mathcal{M}_+\cup \mathcal{M}_3$.
For real-valued multiplicities, $\mathcal{M}_0$ is the natural set for which both hypergeometric functions $G_\lambda(m)$ and $F_\lambda(m)$ are defined for all $\lambda\in\mathfrak{a}^*_\C$; see Appendix A.
Recall that the elements of $\mathcal{M}_1$ are called standard multiplicity functions (see \cite[Definition 5.5.1]{HS94}). These sets of multiplicities are represented in Figure \ref{fig:m}.
\begin{center}
\begin{figure}[H]
\begin{tikzpicture}[
    scale=1.8,
    axis/.style={very thick, ->, >=stealth'},
    equation line/.style={thin},
    ]
   \fill[gray!20!,path fading=east, fading angle=45]
    (0,0) -- (1.8,0) -- (1.8,1.8) -- (0,1.8) -- cycle;
   \fill[gray!110!,path fading=east]
   (0,0) -- (1.8,0) -- (1.8,-.9) -- cycle;
   \fill[gray!40!,path fading=east]
   (0,0) -- (1.8,-.9) -- (1.4,-1.4) -- cycle;
   \fill[gray!40!,path fading=north]
   (0,0) -- (0,1.8) -- (-1.4,1.4) -- cycle;
    \draw[axis] (-1.4,0)  -- (2,0) node(xline)[right]
        {$m_{\mathrm{s}}$};
    \draw[axis] (0,-1.4) -- (0,2) node(yline)[above]
        {$m_{\mathrm{l}}$};
     \draw[equation line] (0,0) -- (1.8,-.9)
        node[right, text width=10em, rotate=0]
        {$m_{\mathrm{s}}+2m_{\mathrm{l}}=0$};
     \draw[equation line] (-1.4,1.4) -- (1.4,-1.4)
        node[right, text width=10em, rotate=0]
        {$m_{\mathrm{s}}+m_{\mathrm{l}}=0$};
     \draw [->,line width=.5pt] (.8,-.4)
        arc[x radius=.9cm, y radius =.9cm,
        start angle=-22.5, end angle=88];
     \draw [->,line width=.5pt] (-.5,.5)
        arc[x radius=.7cm, y radius =.7cm,
        start angle=135, end angle=0];
    \draw (1.2,1.5) node[right] {$\mathcal{M}_+$};
  	\draw (-.67,.78) node[right] {{\footnotesize$\mathcal{M}_2$}};
  	\draw (.8,.5) node[right] {{\footnotesize $\mathcal{M}_1$}};
  	\draw (1.2,-.4) node[right] {$\mathcal{M}_3$};
\end{tikzpicture}
\caption{Sets of $BC$ multiplicities}
\label{fig:m}
\end{figure}
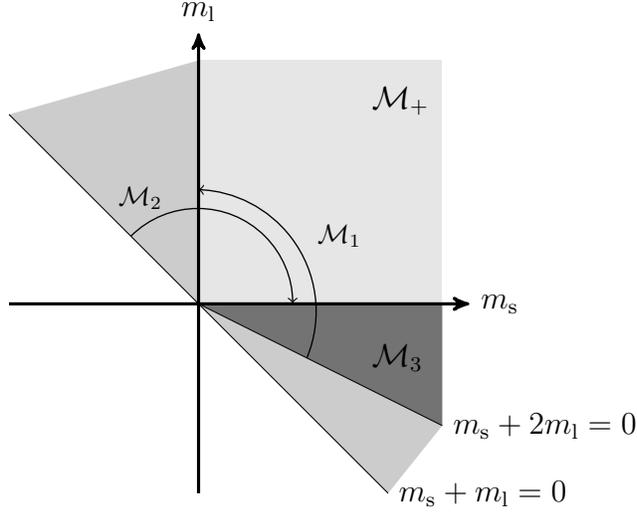
\end{center}


\subsection{Basic estimates}
\label{subsection:estimates}

Under the assumption that $m\in \mathcal{M}_+$, the positivity properties of  $F_{\l}(m)$ and $G_{\l}(m)$ for $\l\in\fa^*$ as well as their basic estimates have been proved by Schapira in \cite[\S 3.1]{Sch08}, by refining some ideas from \cite[\S 6]{OpdamActa}. Under the same assumption, Schapira's estimates for $F_\l(m)$ and $G_\l(m)$ have been sharpened by R\"osler, Koornwinder and Voit in \cite[\S 3]{RKV13}.
The following proposition collects their results.

\begin{Prop}
\label{prop:positivity}
Let $m\in \mathcal{M}_+$.
Then the following properties hold.
\begin{enumerate}
\thmlist
\item For all $\l\in\fa^*$ the functions $G_\l(m)$ and  $F_\l(m)$ are real and strictly positive.
\item For all $\l\in\fa_\C^*$
\begin{equation}
\label{eq:basic-estimate1}
\begin{split}
|G_\l(m)|&\leq G_{\Re\l}(m)\,,\\
|F_\l(m)|&\leq F_{\Re\l}(m)\,.
\end{split}
\end{equation}
\item For all $\l\in\fa^*$ and all $x\in \fa$
\begin{equation}
\begin{split}
\label{eq:basic-estimate2}
G_\l(m;x)&\leq G_0(m;x)e^{\max_w (w\l)(x)} \\
F_\l(m;x)&\leq F_0(m;x)e^{\max_w (w\l)(x)} \,.
\end{split}
\end{equation}
More generally, for all $\l\in\fa^*$, $\mu\in \overline{(\fa^*)^+}$ and all $x\in \fa$
\begin{equation}
\begin{split}
\label{eq:basic-estimate3}
G_{\l+\mu}(m;x)&\leq G_\mu(m;x)e^{\max_w (w\l)(x)} \\
F_{\l+\mu}(m;x)&\leq F_\mu(m;x)e^{\max_w  (w\l)(x)} \,.
\end{split}
\end{equation}
\end{enumerate}
(In the above estimates, $\max_w$ denotes the maximum over all $w\in W$.)
\end{Prop}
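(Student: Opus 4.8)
The plan is to deduce all three assertions from a single positive integral representation of the non-symmetric function $G_\l(m)$, and then to pass to the symmetric function $F_\l(m)$ by averaging over the Weyl group via \eqref{eq:F-G}.

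First I would import the key analytic input, which is the common source of (a)--(c): for $m\in\mathcal{M}_+$ and each fixed $x\in\fa$, the entire function $\l\mapsto G_\l(m;x)$ on $\fa_\C^*$ has the form
\begin{equation*}
G_\l(m;x)=\int_{\conv(Wx)} e^{\l(\xi)}\,d\nu_x(\xi)\,,
\end{equation*}
where $\nu_x$ is a \emph{positive} measure supported in the convex hull $\conv(Wx)$ of the Weyl orbit of $x$, of total mass $\nu_x(\conv(Wx))=G_0(m;x)$. This is exactly the content of Opdam's analysis of the Cherednik system \cite[\S6]{OpdamActa} together with its refinements by Schapira \cite[\S3.1]{Sch08} and R\"osler, Koornwinder and Voit \cite[\S3]{RKV13}. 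Positivity of $\nu_x$ is where the hypothesis $m\in\mathcal{M}_+$ enters decisively (it may fail outside $\mathcal{M}_+$), while the localization of the support in $\conv(Wx)$ is a Kostant-convexity/finite-propagation-type fact matching the geometric case, in which $G_\l$ reduces to a Harish-Chandra-type integral. Establishing this representation is the main obstacle; granting it, everything else is elementary.

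Given the representation, (a) for $G_\l$ is immediate: for $\l\in\fa^*$ the integrand $e^{\l(\xi)}$ is real and positive and $\nu_x$ is positive of positive mass, so $G_\l(m;x)>0$. Assertion (b) follows from the triangle inequality for integrals,
\begin{equation*}
|G_\l(m;x)|\le\int_{\conv(Wx)}\big|e^{\l(\xi)}\big|\,d\nu_x(\xi)=\int_{\conv(Wx)} e^{(\Re\l)(\xi)}\,d\nu_x(\xi)=G_{\Re\l}(m;x)\,.
\end{equation*}
For (c), any $\xi\in\conv(Wx)$ is $\xi=\sum_{w\in W}c_w\,wx$ with $c_w\ge0$ and $\sum_w c_w=1$, so for real $\l$ one has $\l(\xi)\le\max_{w\in W}\l(wx)=\max_{w\in W}(w\l)(x)$, the last equality because $w\mapsto w^{-1}$ permutes $W$. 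Factoring $e^{\l(\xi)}\le e^{\max_w(w\l)(x)}$ out of the integral gives $G_\l(m;x)\le G_0(m;x)\,e^{\max_w(w\l)(x)}$; the same bound applied inside $G_{\l+\mu}(m;x)=\int e^{\l(\xi)}e^{\mu(\xi)}\,d\nu_x(\xi)$ leaves $\int e^{\mu(\xi)}\,d\nu_x=G_\mu(m;x)$ and yields $G_{\l+\mu}(m;x)\le G_\mu(m;x)\,e^{\max_w(w\l)(x)}$ (dominance of $\mu$ is not actually needed for this step).

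Finally I would transfer the three properties to $F_\l(m)$ using $F_\l(m;x)=|W|^{-1}\sum_{w\in W}G_\l(m;w^{-1}x)$. Positivity and $|F_\l|\le F_{\Re\l}$ pass through term by term. For the last estimate the key observation is that $\max_{w'\in W}(w'\l)(w^{-1}x)$ is independent of $w$: writing $u=(w')^{-1}w^{-1}$, which runs over all of $W$ as $w'$ does, it equals $\max_{u\in W}\l(ux)=\max_{w\in W}(w\l)(x)$. Hence the common factor $e^{\max_w(w\l)(x)}$ pulls out of the average, and the remaining averages $|W|^{-1}\sum_w G_0(m;w^{-1}x)$ and $|W|^{-1}\sum_w G_\mu(m;w^{-1}x)$ are precisely $F_0(m;x)$ and $F_\mu(m;x)$, giving the two claimed bounds.
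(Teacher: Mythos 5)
Your deductions are the easy part of the argument, and they are all correct: given the representation $G_\l(m;x)=\int_{\conv(Wx)}e^{\l(\xi)}\,d\nu_x(\xi)$ with $\nu_x\geq 0$, properties (a), (b), (c) for $G_\l$ follow exactly as you say, and the transfer to $F_\l$ by Weyl-group averaging (including the observation that $\max_{w'}(w'\l)(w^{-1}x)$ does not depend on $w$) is sound. The genuine gap is the "key analytic input" itself. The positive representing measure in the spectral variable, supported in $\conv(Wx)$, is \emph{not} the content of the references you cite: \cite[\S 6]{OpdamActa} proves the $\sqrt{|W|}$-bounds by a maximum-principle argument applied to $\sum_w|\phi_w|^2e^{-2\mu}$; \cite[\S 3.1]{Sch08} proves positivity, (b) and \eqref{eq:basic-estimate2} by minimum-principle arguments applied directly to the differential-reflection system \eqref{eq:hypereq2}; and \cite[\S 3]{RKV13} obtains \eqref{eq:basic-estimate3} by a Phragm\'en--Lindel\"of argument. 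Those authors use such arguments precisely because no positive integral representation of Laplace type is known for general $m\in\mathcal{M}_+$; this is the trigonometric analogue of R\"osler's theorem on the positivity of the rational Dunkl intertwining operator and is an open problem, a point made explicitly in \cite{Sch08} and \cite{RKV13}. Such a representation is available essentially only for geometric multiplicities (Harish-Chandra's integral formula combined with Kostant's convexity theorem) and in rank one via explicit Jacobi-function formulas, and then for the symmetric function $F_\l$ rather than the non-symmetric $G_\l$, which is what your argument needs.

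Note also that your hypothesis is strictly stronger than the proposition: it asserts that $\l\mapsto G_\l(m;x)$ is exponentially convex (i.e. $(\l_1,\l_2)\mapsto G_{\l_1+\l_2}(m;x)$ is a positive semidefinite kernel), which does not follow from (a)--(c). So as written the proposal assumes an unproven statement from which the proposition is a trivial consequence, and nothing is actually proved. For comparison, the paper does not reprove Proposition \ref{prop:positivity} at all: it quotes it from \cite{Sch08} and \cite{RKV13}, and the arguments it does spell out (in Proposition \ref{prop:positivity-estimates-M3}, to extend the statement to $\mathcal{M}_3$) are exactly the PDE-type ones above: a minimal-norm-zero argument on the system \eqref{eq:hypereq2} for positivity, Schapira's $|Q_\l|^2$ and $R_\l$ computations for (b) and \eqref{eq:basic-estimate2}, and Phragm\'en--Lindel\"of for \eqref{eq:basic-estimate3}. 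A correct self-contained proof would have to follow that route (or first settle the representation problem, which is a much harder task).
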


Proposition \ref{prop:positivity} lists the sharpest estimates known so far for Heckman-Opdam’s hypergeometric functions. The following lemma, even if quite elementary, is the key result allowing us to modify their proofs and to extend these estimates to multiplicities $m\in \mathcal{M}_3$.. It will play a similar role also for extending the asymptotics and the boundedness characterization..

\begin{Lemma}
\label{lemma:pos-coeffs}
Let $m=(m_\rms,m_\rmm,m_\rml)\in \mathcal{M}_0$ and $\beta\in \Sigma_{\rml}^+$. Then  the following inequalities hold for all $x\in \mathfrak{a}$:
\begin{enumerate}
\thmlist
\item
$\displaystyle{\frac{m_\rms}{2}+m_\rml \, \frac{1}{1+e^{-\beta(x)}}\geq 0}$\quad  if $m\in \mathcal{M_+}\cup \mathcal{M}_3$;
\smallskip

\item
$\displaystyle{\frac{m_\rms}{2}+m_\rml \, \frac{1+e^{-2\beta(x)}}{(1+e^{-\beta(x)})^2}\geq 0}$\quad  if $m\in \mathcal{M}_2\cup \mathcal{M}_3$.
\end{enumerate}
\end{Lemma}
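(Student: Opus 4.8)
The plan is to reduce each inequality to a one-variable estimate via the substitution $t = e^{-\beta(x)}$. Since $\beta\in \Sigma_{\rml}^+$ is a nonzero root and $x$ ranges over all of $\mathfrak{a}$, the number $\beta(x)$ takes every real value, so $t$ ranges over the whole open half-line $(0,\infty)$. In both parts the left-hand side is \emph{affine} in the fractional factor that multiplies $m_\rml$, so I expect the sign of $m_\rml$ alone to control monotonicity and thereby locate the infimum.

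For part (a), I would set $u = \tfrac{1}{1+t}$, which sweeps out the open interval $(0,1)$ as $t$ runs over $(0,\infty)$, and then show $\tfrac{m_\rms}{2} + m_\rml u \geq 0$ for all $u\in(0,1)$. When $m\in\mathcal{M}_+$ both coefficients are nonnegative and there is nothing to do. When $m\in\mathcal{M}_3$ one has $m_\rml\leq 0$, so the affine function is nonincreasing in $u$; its infimum over $(0,1)$ is the limit as $u\to 1$, namely $\tfrac{m_\rms}{2}+m_\rml = \tfrac12(m_\rms+2m_\rml)$, which is $\geq 0$ precisely by the defining inequality of $\mathcal{M}_3$.

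For part (b), the key preliminary step is to understand the one-variable function $v(t) = \tfrac{1+t^2}{(1+t)^2}$. Rewriting it as $v(t) = 1 - \tfrac{2t}{(1+t)^2}$ makes its behaviour transparent: the subtracted term is maximized at $t=1$ with value $\tfrac12$ and tends to $0$ as $t\to 0^+$ or $t\to\infty$, so $v$ attains its minimum $\tfrac12$ at $t=1$ and approaches $1$ at both ends, giving range $[\tfrac12,1)$. Again $\tfrac{m_\rms}{2}+m_\rml v$ is affine in $v$. For $m\in\mathcal{M}_2$ one has $m_\rml\geq 0$ and the function is nondecreasing, so its minimum sits at $v=\tfrac12$, yielding $\tfrac12(m_\rms+m_\rml)\geq 0$, the defining inequality of $\mathcal{M}_2$. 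For $m\in\mathcal{M}_3$ one has $m_\rml\leq 0$ and the function is nonincreasing, so its infimum is the limit $v\to 1$, yielding $\tfrac12(m_\rms+2m_\rml)\geq 0$, the defining inequality of $\mathcal{M}_3$.

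There is no serious obstacle here: the argument is elementary and amounts to monotonicity in $m_\rml$ followed by evaluation at an endpoint. What makes it run cleanly is the exact match between the endpoints of the range of each auxiliary function and the linear conditions cutting out the multiplicity sets — the endpoint value $1$ reproduces the combination $m_\rms+2m_\rml$ governing $\mathcal{M}_3$, while the interior minimum $\tfrac12$ of part (b) reproduces $m_\rms+m_\rml$ governing $\mathcal{M}_2$. The only computation demanding a little care is verifying that this minimum equals $\tfrac12$ and is attained at $t=1$; the hypothesis $m\in\mathcal{M}_0$ is then automatically consistent, since each of $\mathcal{M}_+$, $\mathcal{M}_2$, $\mathcal{M}_3$ is contained in $\mathcal{M}_0$.
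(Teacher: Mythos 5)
Your proof is correct and takes essentially the same route as the paper's: both arguments reduce the claim to bounding an affine function of the fractional factor over its range, using that the factor in (a) lies in $[0,1]$ and the factor in (b) lies in $[\tfrac{1}{2},1)$, and then invoking the defining inequalities $m_\rms+2m_\rml\geq 0$ (for $\mathcal{M}_3$) and $m_\rms+m_\rml\geq 0$ (for $\mathcal{M}_2$). The only cosmetic differences are that the paper gets $\frac{1+e^{-2\beta(x)}}{(1+e^{-\beta(x)})^2}\geq \tfrac{1}{2}$ from the general inequality $\frac{1+|z|^2}{|1+z|^2}\geq\tfrac{1}{2}$ rather than your explicit minimization at $t=1$, and it handles $\mathcal{M}_+\cup\mathcal{M}_3$ uniformly via the single bound $\frac{m_\rms}{2}+Cm_\rml\geq(1-C)\frac{m_\rms}{2}$ for $C\in[0,1]$ instead of splitting on the sign of $m_\rml$.
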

\begin{proof}
If $m\in \mathcal{M_+}\cup \mathcal{M}_3$, then $m_\rms\geq 0$ and $m_\rml\geq -\frac{1}{2}m_\rms$. Hence, for every $0\leq C\leq 1$, we have
$\frac{m_\rms}{2}+C m_\rml  \geq (1-C)\frac{m_\rms}{2} \geq 0$.
We therefore obtain both (a) and (b) for these $m$'s by observing that for every $t=-\b(x)\in \R$ we have
$$0\leq \frac{1}{1+e^t}\leq 1 \qquad \text{and} \qquad 0\leq\frac{1+e^{2t}}{(1+e^t)^2}\leq 1\,.$$

Suppose now that $m\in \mathcal{M}_2$. Since $m_\rml\geq 0$, $m_\rms+m_\rml\geq 0$ and $\frac{1+|z|^2}{|1+z|^2}\geq \frac{1}{2}$ for all $z\in \C$, we immediately
obtain that for all $t=-\beta(x)\in \R$ we have
$$\frac{m_\rms}{2}+m_\rml \, \frac{1+e^{2t}}{(1+e^t)^2} \geq \frac{1}{2}(m_\rms+m_\rml) \geq 0\,.$$
This proves the lemma.
\end{proof}

\begin{Rem}
Since
$$
\lim_{t\to+\infty} \Big(\frac{m_\rms}{2}+m_\rml \frac{1}{1+e^t}\Big)=\frac{m_\rms}{2}
\quad \text{and} \quad
\left.\Big( \frac{1}{2} m_\rms+m_\rml\frac{1+e^{2t}}{(1+e^t)^2} \Big)\right|_{t=0}=\frac{1}{2}m_\rms+m_\rml\,,$$
it is clear that the inequality (a) of Lemma \ref{lemma:pos-coeffs} cannot extend to $m\in
\mathcal{M}_0 \setminus \big(\mathcal{M}_+\cup \mathcal{M}_3\big)$ and (b) cannot extend to $m\in
\mathcal{M}_0 \setminus \big(\mathcal{M}_2\cup \mathcal{M}_3\big)$.
\end{Rem}

To use the methods from \cite[\S 3]{RKV13}, we will
also need an extension of the real case of  Opdam's estimates, \cite[Proposition 6.1 (1)]{OpdamActa}. We will do this for $m\in \mathcal{M}_2\cup \mathcal{M}_3$.
Notice that the complex variable version of Opdam's estimates, \cite[Proposition 6.1 (2)]{OpdamActa}, has been recently extended by Ho and \'Olafsson \cite[Appendix A]{HoOl14} to $m\in \mathcal{M}_2$ and to a domain in $\mathfrak{a}_\C$ which is much larger than the original one considered by Opdam. For $m\in \mathcal{M}_2$  the inequality (b) of Lemma \ref{lemma:pos-coeffs} was noticed in the proof of \cite[Proposition 5.A, p. 25]{HoOl14}.

\begin{Lemma}
\label{lemma:OpdamM2M3}
Suppose that $m\in \mathcal{M}_2 \cup \mathcal{M}_3$.
Then for all $\l\in\fa_\C^*$ and $x\in \fa$
\begin{equation}
\label{eq:basic-estimate4}
\begin{split}
|G_\l(m;x)|&\leq \sqrt{|W|} e^{\max_w \Re(w\l)(x)}\,,\\
|F_\l(m;x)|&\leq \sqrt{|W|} e^{\max_w \Re(w\l)(x)}\,.
\end{split}
\end{equation}
\end{Lemma}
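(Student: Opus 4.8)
The plan is to follow the argument Opdam gave for nonnegative multiplicities (\cite[Prop.~6.1(1)]{OpdamActa}, in the form used in \cite{RKV13}), isolating the single place where $m_\a\geq 0$ is invoked and replacing it by Lemma~\ref{lemma:pos-coeffs}(b), which holds precisely on $\mathcal{M}_2\cup\mathcal{M}_3$. First I would reduce the bound for $F_\l$ to that for $G_\l$: by \eqref{eq:F-G} and the triangle inequality, $|F_\l(m;x)|\leq\frac{1}{|W|}\sum_{w\in W}|G_\l(m;w^{-1}x)|$, and since $W$ acts by isometries the function $x\mapsto\max_{w}\Re(w\l)(x)$ is $W$-invariant; applying the $G_\l$-estimate to each term $G_\l(m;w^{-1}x)$ then yields the $F_\l$-estimate. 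Thus it suffices to bound $|G_\l|$.

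Next, following Opdam, I would introduce the auxiliary function
\[
\Phi(x)=\sum_{w\in W}\bigl|G_\l(m;wx)\bigr|^2,
\]
which is smooth, strictly positive, $W$-invariant, and satisfies $\Phi(0)=|W|$ because $G_\l(m;0)=1$. Since $|G_\l(m;x)|^2\leq\Phi(x)$, the estimate \eqref{eq:basic-estimate4} for $G_\l$ reduces to the single inequality $\Phi(x)\leq|W|\,e^{2\max_{w}\Re(w\l)(x)}$. By $W$-invariance it is enough to prove this on the closed chamber $\overline{\fa^+}$, where the exponent equals the linear function $\nu(x)=\langle(\Re\l)^+,x\rangle$, with $(\Re\l)^+$ the dominant representative of $W\Re\l$. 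Setting $\psi=e^{-2\nu}\Phi$, the goal becomes $\psi\leq|W|=\psi(0)$ on $\overline{\fa^+}$.

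The heart of the proof is a second-order differential inequality for $\Phi$ (equivalently $\psi$) extracted from the eigenvalue equation $T_{p_L}(m)G_\l=\langle\l,\l\rangle G_\l$ together with the explicit form \eqref{eq:HOLaplacian} of the Heckman--Opdam Laplacian. Expanding $L_\fa\Phi$ and substituting the eigenequation produces, besides the manifestly nonnegative gradient terms $\sum_w|\nabla G_\l(m;w\,\cdot)|^2$ and terms depending only on $\l$, a sum over $\Sigma^+$ of reflection contributions weighted by the $m_\a$. In Opdam's computation the sign of these contributions is controlled by the nonnegativity of each $m_\a$. For $m\in\mathcal{M}_2\cup\mathcal{M}_3$ the long multiplicity $m_\rml$ may be negative, so the contribution of a long root $\b$ and that of the collinear short root $\b/2$ must be collected together; this is legitimate because $r_{\b/2}=r_\b$, and after collecting them the only surviving multiplicity-dependent factor is
\[
\frac{m_\rms}{2}+m_\rml\,\frac{1+e^{-2\b(x)}}{(1+e^{-\b(x)})^2},
\]
which is nonnegative by Lemma~\ref{lemma:pos-coeffs}(b). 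With this in hand the grouped contributions keep the same sign as in the positive case, and Opdam's derivation yields the required differential inequality for $\psi$ unchanged.

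From the differential inequality I would then conclude by the same maximum-principle (comparison) argument as in the nonnegative case: the sign of the inequality, together with the normalization $\psi(0)=|W|$ and the control of $\psi$ at infinity, forces $\sup_{\overline{\fa^+}}\psi=|W|$, whence $\Phi(x)\leq|W|e^{2\nu(x)}$ and therefore \eqref{eq:basic-estimate4}. I expect the main obstacle to be exactly the grouping step: one must verify that, after collecting all short- and long-root terms collinear to each $\b\in\Sigma_\rml^+$, the precise multiplicity factor that remains is the one appearing in Lemma~\ref{lemma:pos-coeffs}(b), with no residual negative remainder that would destroy the sign needed for the comparison argument. Once this bookkeeping is settled, everything else is a routine transcription of the nonnegative-multiplicity proof.
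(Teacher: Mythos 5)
Your overall architecture --- reducing $F_\l$ to $G_\l$ via \eqref{eq:F-G}, bounding the $W$-invariant auxiliary function $\sum_{w\in W}|G_\l(m;wx)|^2$, grouping the contributions of $\b$ and $\b/2$ for $\b\in\Sigma^+_\rml$, and invoking Lemma \ref{lemma:pos-coeffs}(b) --- is exactly the paper's (and Opdam's). But the technical engine you chose does not deliver it. The paper follows \cite[Proposition 6.1 (1)]{OpdamActa}, which is a \emph{first-order} argument: one differentiates the auxiliary function in a direction $\xi$ lying in the same chamber as $x$, using the Cherednik equations $T_\xi(m)G_\l=\l(\xi)G_\l$, and concludes by monotonicity along the segment from $0$ to $x$. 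In that computation the reflection contribution of a root $\a$ carries the weight $m_\a\,\a(\xi)\,(1-e^{-4\a(x)})/(1-e^{-2\a(x)})^2=m_\a\,\a(\xi)\coth\a(x)$, and grouping $\b$ with $\b/2$ produces precisely the factor $\frac{m_\rms}{2}+m_\rml\,\frac{1+e^{-2\b(x)}}{(1+e^{-\b(x)})^2}$ of Lemma \ref{lemma:pos-coeffs}(b). Your \emph{second-order} route through \eqref{eq:HOLaplacian} produces different weights: there the reflection term of $\a$ is weighted by $m_\a\inner{\a}{\a}/\sinh^2\a$, and grouping gives, up to the positive factor $\inner{\b}{\b}\big/\big(4\sinh^2(\b(x)/2)\big)$, the quantity $m_\rms+m_\rml\cosh^{-2}(\b(x)/2)$. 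This is \emph{not} the factor of Lemma \ref{lemma:pos-coeffs}(b), and it is not nonnegative on all of $\mathcal{M}_2$: for $(m_\rms,m_\rmm,m_\rml)=(-1,1,2)\in\mathcal{M}_2$ it tends to $m_\rms=-1<0$ as $\b(x)\to\infty$. So the bookkeeping you yourself flagged as the main obstacle actually fails in your setup, on part of the very set $\mathcal{M}_2\cup\mathcal{M}_3$ that the lemma covers.

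There is a second, independent gap at the concluding step. You appeal to ``the control of $\psi$ at infinity'' to force $\sup_{\overline{\fa^+}}\psi=\psi(0)$; but no such control is available a priori --- a growth bound on $G_\l$ is exactly what the lemma asserts, so as stated this step is circular. (It also mischaracterizes the nonnegative-multiplicity proof: Opdam's argument is not a maximum principle; the inequality $\partial_\xi\big(e^{-2\mu}\sum_w|\phi_w|^2\big)\le 0$ for $\xi$ in the chamber of $x$ bounds the auxiliary function pointwise by its value $|W|$ at $0$, with no analysis at infinity at all.) Moreover, in the second-order expansion the drift terms $m_\a\coth\a\,\partial_{x_\a}$ of $L(m)$ in \eqref{eq:Laplm}, which are singular on the walls and carry no sign control when some $m_\a<0$, are never dealt with. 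Both gaps disappear if you run your plan at first order, as the paper does: then your grouping yields exactly the factor of Lemma \ref{lemma:pos-coeffs}(b), and the monotonicity conclusion goes through verbatim.
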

\begin{proof}
Regrouping the terms corresponding to
$\b$ and $\b/2$ and setting $z=x\in \mathfrak{a}$, we can
rewrite the first term on the right-hand side of the first displayed equation in \cite[p. 101]{OpdamActa} as
\begin{eqnarray*}
&&\sum_{w\in W, \a\in \Sigma_\rmm^+}
m_\a \frac{\a(\xi)(1-e^{-4\a(x)})}{(1-e^{-2\a(x)})^2} |\phi_w-\phi_{r_\a w}|^2 e^{-2\mu(x)}+\\
&&\quad \sum_{w\in W, \beta\in \Sigma_\rml^+}
\Big(m_{\beta/2} \frac{\frac{\b}{2}(\xi)(1-e^{-2\beta(x)})}{(1-e^{-\beta(x)})^2} +
m_{\beta} \frac{\beta(\xi)(1-e^{-4\beta(x)})}{|1-e^{-2\beta(x)})^2} \Big) |\phi_w-\phi_{r_\beta w}|^2 e^{-2\mu(x)}
\end{eqnarray*}
in which
$\alpha(\xi) (1-e^{-4\a(x)})$ is positive for all $\a\in \Sigma^+$ when $\xi\in \mathfrak{a}$ is chosen in the same Weyl chamber of $x$. 
(In \cite[p. 101]{OpdamActa}, there is an additional multiplicative constant $-\frac{1}{2}$. Our argument takes into account this sign change.)
The coefficient of $ |\phi_w-\phi_{r_\beta w}|^2 e^{-2\mu(x)}$ for $\beta\in \Sigma_\rmm^+$ is then clearly positive. For $\beta\in \Sigma_\rml^+$, this coefficient is equal to
$$
\frac{\beta(\xi)(1-e^{-2\beta(x)})}{(1-e^{-\beta(x)})^2}
\Big(\frac{m_\rms}{2} +m_\rml
\frac{1+e^{-2\beta(x)}}{(1+e^{-\beta(x)})^2} \Big)\,,$$
which is positive by Lemma \ref{lemma:pos-coeffs}(b).
Therefore, the same proof as in \cite[Proposition 6.1]{OpdamActa} allows us to obtain the required inequalities.
\end{proof}

The following proposition extends the positivity properties and basic estimates from \cite[\S 3.1]{Sch08} and \cite[\S 3]{RKV13} to the multiplicity functions in $\mathcal{M}_3$.

\begin{Prop}
\label{prop:positivity-estimates-M3}
Proposition \ref{prop:positivity} holds for $m\in \mathcal{M}_3$.
\end{Prop}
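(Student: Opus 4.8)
The plan is to reprove Proposition \ref{prop:positivity} for $m\in\mathcal{M}_3$ by running the arguments of Schapira \cite[\S 3.1]{Sch08} and of R\"osler, Koornwinder and Voit \cite[\S 3]{RKV13} essentially unchanged, modifying only the treatment of the sums over positive roots in which nonnegativity of the multiplicity is used. (Note that $\mathcal{M}_3\subset\mathcal{M}_0$, so $G_\l(m)$ and $F_\l(m)$ are defined for all $\l\in\mathfrak{a}_\C^*$.) For $m\in\mathcal{M}_+$ each summand $m_\a(\cdots)$ is separately nonnegative; for $m\in\mathcal{M}_3$ the middle-root terms remain harmless since $m_\rmm\geq0$, but the long-root coefficient $m_\rml$ may be negative and must be grouped with the corresponding short-root coefficient before its sign can be controlled.

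This grouping is the crux. Each short root $\b/2\in\Sigma_\rms^+$ is proportional to the long root $\b\in\Sigma_\rml^+$ and induces the same reflection $r_{\b/2}=r_\b$, so in any expression $\sum_{\a\in\Sigma^+}m_\a\,\a(\xi)(\cdots_\a)$ occurring in these proofs the $\b/2$- and $\b$-contributions coalesce into a single term. When the argument is phrased through the Cherednik operators \eqref{eq:T}, using $\tfrac{\b}{2}(\xi)=\tfrac12\b(\xi)$ the combined coefficient is $\b(\xi)$ times
\begin{equation*}
\frac{m_\rms}{2}\,(1-e^{-\b(x)})^{-1}+m_\rml\,(1-e^{-2\b(x)})^{-1}
=(1-e^{-\b(x)})^{-1}\Big(\frac{m_\rms}{2}+m_\rml\,\frac{1}{1+e^{-\b(x)}}\Big),
\end{equation*}
whose parenthetical factor is nonnegative by Lemma \ref{lemma:pos-coeffs}(a); when the argument is phrased through $L(m)$ or through Opdam's quadratic form (coefficients $\coth\a$, resp.\ $(1-e^{-4\a})/(1-e^{-2\a})^2$), the same regrouping produces instead $\tfrac{m_\rms}{2}+m_\rml(1+e^{-2\b(x)})/(1+e^{-\b(x)})^2\geq0$, i.e.\ Lemma \ref{lemma:pos-coeffs}(b), exactly as already carried out in the proof of Lemma \ref{lemma:OpdamM2M3}.

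With this substitution the three parts follow. For the comparison estimates \eqref{eq:basic-estimate1}--\eqref{eq:basic-estimate3} I would follow Schapira and R\"osler--Koornwinder--Voit: the gradient and monotonicity arguments rest on the Cherednik eigenvalue equation \eqref{eq:hypereq2} for $G_\l$ and reduce to Lemma \ref{lemma:pos-coeffs}(a), while wherever Opdam's estimate \cite[Proposition 6.1]{OpdamActa} is invoked it is replaced by Lemma \ref{lemma:OpdamM2M3}, which is already available on $\mathcal{M}_2\cup\mathcal{M}_3\supseteq\mathcal{M}_3$. The statements for $F_\l$ then follow from those for $G_\l$ through the averaging formula \eqref{eq:F-G}, using that $\max_{w}(w\l)(w'^{-1}x)=\max_w(w\l)(x)$ is independent of $w'\in W$. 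For the positivity (a) I would reduce, as in \cite{Sch08}, to the strict positivity of the base functions $G_0(m)$ and $F_0(m)$ and then propagate to all real $\l$ by continuity together with the estimates; the positivity of $G_0,F_0$ is obtained from a maximum-principle argument for the elliptic operator $L(m)$, whose regrouped first-order coefficients have the required sign on $\overline{\mathfrak{a}^+}$ by Lemma \ref{lemma:pos-coeffs}(b).

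The main obstacle will be bookkeeping rather than conceptual. One must verify, step by step in \cite{Sch08} and \cite{RKV13}, that the potentially negative $m_\rml$-terms never appear in isolation but only in the grouped form to which Lemma \ref{lemma:pos-coeffs} applies, and that the factor pulled out of the combined coefficient (here $\b(\xi)(1-e^{-\b(x)})^{-1}$) carries the sign the argument needs when $\xi$ lies in the Weyl chamber of $x$. The genuinely delicate point is the base positivity of $G_0(m)$ and $F_0(m)$ for $m\in\mathcal{M}_3$: the positive integral representation available when $m\in\mathcal{M}_+$ is lost once $m_\rml<0$, so this positivity must be secured independently, via the maximum-principle argument just described. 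Once these verifications are in place, the original proofs yield Proposition \ref{prop:positivity} for $m\in\mathcal{M}_3$.
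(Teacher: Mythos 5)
Most of your proposal coincides with the paper's proof: the regrouping of the $\b/2$- and $\b$-contributions into the single coefficient $\frac{\b(\xi)}{1-e^{-\b(x)}}\big[\frac{m_\rms}{2}+m_\rml\frac{1}{1+e^{-\b(x)}}\big]$, with sign controlled by Lemma \ref{lemma:pos-coeffs}(a), is exactly the paper's \eqref{eq:positivity-beta}; your treatment of \eqref{eq:basic-estimate1} and \eqref{eq:basic-estimate2} (Schapira's computations rerun with grouped coefficients) and of \eqref{eq:basic-estimate3} (the R\"osler--Koornwinder--Voit argument with Opdam's estimate replaced by Lemma \ref{lemma:OpdamM2M3}) is what the paper does, as is the passage from $G_\l$ to $F_\l$ via \eqref{eq:F-G}.

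The genuine gap is your treatment of part (a), the strict positivity. Your plan --- establish the estimates first, then propagate positivity from $G_0(m)$, $F_0(m)$ to all real $\l$ ``by continuity together with the estimates'' --- is circular. In the Schapira/R\"osler--Koornwinder--Voit scheme you are following, positivity of $G_\l(m)$ for \emph{every} $\l\in\fa^*$ is the first input: the proof of $|G_\l|\leq G_{\Re\l}$ requires $G_{\Re\l}>0$ (the quantity $Q_\l$ in Schapira's Proposition 3.1(a) is built from it), the proof of \eqref{eq:basic-estimate2} requires $G_0>0$, and the proof of \eqref{eq:basic-estimate3} in \cite{RKV13} takes (a), (b) and \eqref{eq:basic-estimate2} as hypotheses. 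So none of the estimates is available before (a) is proved; moreover they are \emph{upper} bounds on $G_\l$, and the only lower bound one can extract (take $\l=-\mu$ in \eqref{eq:basic-estimate3} to get $G_\mu\geq G_0\,e^{\min_w(w\mu)(x)}$) concerns dominant $\mu$ only and again presupposes \eqref{eq:basic-estimate3}. Your fallback for the base positivity is also unsound: $G_0(m)$ satisfies only the differential-reflection system $T_\xi(m)G_0=0$, not a scalar elliptic equation, so there is no ``maximum principle for $L(m)$'' to apply to it; and $F_0(m)$ satisfies $\big(L(m)+\inner{\rho(m)}{\rho(m)}\big)F_0=0$, an equation whose zeroth-order coefficient is strictly positive, which is precisely the sign for which the classical maximum principle fails (compare $u''+u=0$ on $(0,\pi)$: $\sin$ vanishes at the endpoints and is positive inside, with no contradiction).

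The repair is the point you seem to have missed: Schapira's positivity proof is a direct argument valid for each fixed real $\l$ separately, and it uses no integral representation even when $m\in\mathcal{M}_+$, so nothing is ``lost'' when $m_\rml<0$. One assumes $G_\l(m)$ vanishes somewhere, takes a zero $x$ of minimal norm, and evaluates the Cherednik equation \eqref{eq:hyperGlambda} at $x$, treating separately $x$ regular ($\xi$ in the chamber of $x$) and $x$ singular ($\xi$ in the face of $x$, which kills the terms $\partial_{x_\a}G_\l(m;x)$ with $\a(x)=0$). All coefficients of the differences $G_\l(m;r_\a x)-G_\l(m;x)$ are then nonnegative --- for the short/long pairs this is precisely your grouped coefficient and Lemma \ref{lemma:pos-coeffs}(a) --- and the contradiction follows exactly as for $m\in\mathcal{M}_+$. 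There is nothing special about $\l=0$. Once (a) is known for all real $\l$, parts (b), \eqref{eq:basic-estimate2} and \eqref{eq:basic-estimate3} follow in the order you describe, and Proposition \ref{prop:positivity} extends to $\mathcal{M}_3$.
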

\begin{proof}
The proofs of (a), (b) and \eqref{eq:basic-estimate2} follow the same steps as the proofs of \cite[Lemma 3.1 and Proposition
3.1]{Sch08}. So we shall just indicate what has to be
modified in the proofs when considering $m\in \mathcal{M}_3$ instead of $m\in \mathcal{M}_+$.

Suppose $G_\l(m)$ is not positive and let $x\in \fa$ be a zero of $G_\l(m)$ of minimal norm.
As in \cite[Lemma 3.1]{Sch08}, one has to distinguish whether $x$ is regular or singular. If $x$ is regular, take $\xi$ in the same chamber of $x$.
Evaluation at $x$ of the equation
\begin{equation}
\label{eq:hyperGlambda}
T_\xi(m) G_\l(m)=\l(\xi) G_\l(m)
\end{equation}
yields
\begin{multline*}
\partial_\xi G_\l(m;x)=\sum_{\a\in \Sigma^+} m_\alpha \frac{\a(\xi)}{1-e^{-2\a(x)}}
\big[G_\l(m;r_\a x)-G_\l(m;x)\big]
+\big(\rho(m)+\l\big)(\xi) G_\l(m;x)
\end{multline*}
in which $G_\lambda(m;x)$ vanishes.
In the sum over $\Sigma^+$, the coefficient of
$G_\l(m;r_\a x)-G_\l(m;x)$ is always non-negative for
$\a\in \Sigma_\rmm^+$. Moreover, grouping together
those corresponding to $\b/2$ and $\b\in \Sigma_\rml^+$, we obtain
as coefficient of
$G_\l(m;r_\b x)-G_\l(m;x)$
\begin{equation}
\label{eq:positivity-beta}
m_{\beta/2} \frac{\frac{\beta}{2}(\xi)}{1-e^{-\beta(x)}}
+m_{\beta} \frac{\beta(\xi)}{1-e^{-2\beta(x)}}
=\frac{\b(\xi)}{1-e^{-\b(x)}}\Big[ \frac{m_\rms}{2} +
 m_\rml \frac{1}{1+e^{-\b(x)}}\Big],
\end{equation}
which is positive by Lemma \ref{lemma:pos-coeffs}(a).

If $x$ is singular, let $I=\{\a \in \Sigma^+: \a(x)=0\}$ and let $\xi$ be in the same face of $x$ so that $\alpha(\xi)=0$ for all $\alpha\in I$. In this case,
\eqref{eq:hyperGlambda} evaluated at $x$ gives
\begin{multline*}
\partial_\xi G_\l(m;x)=-\sum_{\a\in I} m_\a \frac{\a(\xi)}{\inner{\a}{\a}}\partial_{x_\a}  G_\l(m;x)\\
+\sum_{\alpha\in \Sigma_{\rmm}^+\setminus I}
m_{\rmm}  \frac{\a(\xi)}{1-e^{-2\a(x)}}
\big[G_\l(m;r_\a x)-G_\l(m;x)\big]\\
+\sum_{\b\in\Sigma_{\rml}^+ \setminus I}
\Big[m_{\beta/2} \frac{\frac{\beta}{2}(\xi)}{1-e^{-\beta(x)}}
+m_{\beta} \frac{\beta(\xi)}{1-e^{-2\beta(x)}}\Big]
\big[G_\l(m;r_\beta x)-G_\l(m;x)\big]\\
+\big(\rho(m)+\l\big)(\xi) G_\l(m;x)\,,
\end{multline*}
in which the first sum and $G_\l(m;x)$ vanish, and
the sum over $\Sigma^+_\rml\setminus I$ has coefficient as in \eqref{eq:positivity-beta}.

In both cases, one can argue as in
\cite[Lemma 3.1]{Sch08} by replacing the multiplicities $k_\a\geq 0$ in that proof with
$\frac{m_\rms}{2} +
 m_\rml\, \frac{1}{1+e^{-\b(x)}}$ and using Lemma \ref{lemma:pos-coeffs}(a).

Having that $G_{\Re \l}(m)$ is real and positive, to prove (b) for $G_\l(m)$, one can make the same grouping and substitution of multiplicities, as done above for $\partial_\xi G_\l(m;x)$, inside the formula for $\partial_\xi |Q_\l|^2(x)$ appearing in the proof of \cite[Proposition 3.1 (a)]{Sch08}. A third application of the same grouping in the formula of $\partial_\xi R_\l(x)$ in the proof of \cite[Proposition 3.1 (b)]{Sch08} yields \eqref{eq:basic-estimate2}.

Finally, \eqref{eq:basic-estimate3} has been proven for
multiplicities $m_\a\geq 0$ in \cite[Theorem 3.3]{RKV13} using the
original versions of (a), (b), (c) and  \eqref{eq:basic-estimate2}
together with a clever application of the Phragm\'en-Lindel\"of
principle that does not involve the root multiplicities. With Opdam's estimate for $m\in \mathcal{M}_3$, as in Lemma \ref{lemma:OpdamM2M3}, the original proof from \cite{RKV13}
extends to the case of $m\in \mathcal{M}_3$ and yields \eqref{eq:basic-estimate3}.
\end{proof}

\subsection{Asymptotics}
\label{subsection:asymptotics}

We now investigate the asymptotic behavior of
the hypergeometric functions $F_\l(m)$ for $m\in \mathcal{M}_{1}$.
For this we follow \cite{NPP}. Before we state our results, it is useful to recall
the methods adopted in \cite{NPP}. Let $m$ be an arbitrary
non-negative multiplicity function. One of the main results in
\cite{NPP} is Theorem 2.11, where a series expansion away from the
walls was obtained for $F_\lambda(m) $ for all $\lambda \in
\frakacs$ (even when $\lambda$ is non-generic). Recall from \cite[\S 4.2]{HS94} (or
from \cite[\S 1.2]{NPP} in symmetric space notation) that, for
generic $\lambda$, the function $F_\lambda(m)$ is given on $\fa^+$ by
$\sum_{w \in W} c(m;w\lambda) \Phi_{w\lambda}(m)$, where
$c(m;\lambda)$ is Harish-Chandra's $c$-function (see \eqref{eq:c} in the appendix)  and
$\Phi_\lambda(m;x)$ admits the series expansion
$$\Phi_\lambda(m;x) = e^{(\lambda-\rho(m))(x)} \sum_{\mu \in 2\Lambda}
\Gamma_\mu(m;\lambda) e^{-\mu(x)} \qquad (x\in \fa^+).$$
The coefficients
$\Gamma_\mu(m,\lambda)$ are determined from the recursion relations
$$ \inner{\mu}{\mu-2\lambda} \Gamma_\mu(m,\lambda)= 2 \sum_{\alpha \in
\Sigma^+} m_\alpha \sum_{n \in \mathbb N, \mu-2n\alpha \in
\Lambda} \Gamma_{\mu-2n\alpha}(m,\lambda) \inner
{\mu+\rho(m)-2n\alpha-\lambda}{\alpha} ,$$ with the initial
condition that $\Gamma_0(m, \lambda)= 1$.
The above defines
$\Gamma_\mu(m;\lambda)$ as meromorphic functions on $\frakacs.$

For a non-generic point $\lambda = \lambda_0$ a series expansion for
$F_\lambda(m)$  was obtained in \cite{NPP} following the steps given
below:

\medskip
\textit{Step I:} List the possible singularities of the
$c$-function and the coefficients  $\Gamma_\mu(m;\lambda)$ at $\lambda =
\lambda_0.$

\smallskip
\textit{Step II:} Identify a polynomial $p$ so that $$\lambda \to
p(\lambda) \Big ( \sum_{w \in W}
c(m;w\lambda)e^{(\lambda-\rho(m))(x)} \sum_{\mu \in 2\Lambda}
\Gamma_\mu(m;\lambda) e^{-\mu(x)} \Big )$$ is holomorphic in a
neighborhood of $\lambda_0.$

\smallskip
\textit{Step III:} Write $F_{\lambda_0}(m) = a~\partial(\pi)
(pF_\lambda(m))|_{\lambda = \lambda_0}$ where $\partial(\pi)$ is the
differential operator corresponding to the highest degree
homogenous term in $p$ and $a,$ is a non-zero constant. This gives
the series expansion of $F_{\lambda_0}(m).$

\medskip
A careful examination shows that the same proofs go through even
with the assumption that the multiplicity function belongs to
$\mathcal{M}_{1}$.
Indeed, the possible singularities of
the $c$-function and the $\Gamma_\mu(m)$ are contained in the same set of
hyperplanes as listed in \cite[Lemma 2.3]{NPP}. Hence the same
polynomials and differential operators can be used in Step II and
III above. The crucial detail to be checked is the
computation of the constant $b_0(m;\lambda_0)$ appearing in
\cite[Lemma 2.6]{NPP}.
The explicit expression in terms of
Harish-Chandra's $c$-function shows that, for root systems of type $BC_r\setminus C_r$, the function $b_0(m;\lambda_0)$ is
non-zero if and only if
\begin{equation}
\label{eq:b-nonzero}
\prod_{\alpha \in \Sigma^+_\rms}
\Gamma\Big(\frac{(\lambda_0)_\alpha}{2} +\frac{m_\rms}{4}+\frac{1}{2}\Big)
\Gamma\Big(\frac{(\lambda_0)_\alpha}{2} +\frac{m_\rms}{4}+\frac{m_\rml}{2}\Big)
\prod_{\alpha \in \Sigma^+_\rmm}
\Gamma\Big(\frac{(\lambda_0)_\alpha}{2} +\frac{m_\rmm}{4}+\frac{1}{2}\Big) \Gamma\Big(\frac{(\lambda_0)_\alpha}{2} +\frac{m_\rmm}{4}\Big)
\end{equation}
(where the second factor in the product does not appear if the rank $r$ is one)
is nonsingular. 
It is clear that the expression in \eqref{eq:b-nonzero} is nonsingular 
for $\lambda_0 \in \frakacs$ with $\Re
\lambda_0 \in \overline{(\fa^\ast)^+}$ if $m\in \mathcal{M}_{1}$.
Notice that the nonvanishing of $b_0(m;\lambda_0)$ identifies the main term in the expansion of
$F_{\lambda_0}(m;x)$ as
$\frac{b_0(m;\lambda_0)}{\pi_0(\rho_0(m))} \pi_0(x) e^{(\lambda_0 - \rho(m))(x)}$,
where $\rho_0(m)$ is defined as in \cite[(58)]{NPP}.
Notice also that $c(m;\lambda_0)$, and hence $b_0(m;\lambda_0)$, can vanish
for $m\in \mathcal{M}_0\setminus \mathcal{M}_1$.

It follows from the above that \cite[Theorem 2.11]{NPP} and, as a
consequence, \cite[Theorem 3.1]{NPP} continue to hold true for a
multiplicity function $m\in \mathcal{M}_{1}$.
We state it below and refer to \cite{NPP} for any unexplained notation.

\begin{Thm}
\label{thm:hc-expansion}
Suppose $m\in \mathcal{M}_{1}$. Let $\lambda_0 \in \frakacs$ with $\Re
\lambda_0 \in \overline{(\fa^\ast)^+},$ and let $x_0 \in \fa^+$ be
fixed. Then, there are constants $C_1 > 0, C_2 > 0$ and $b > 0$
(depending on $m$, $\lambda_0$ and $x_0$) so that for all $x \in x_0 +
\fa^+ :$
\begin{multline} \label{eq:restF-est}
\Big| \frac{F_{\l_0}(m; x)
e^{-(\Re\l_0-\rho(m))(x)}}{\pi_0(x)} -\Big(
\frac{b_0(m;\l_0)}{\pi_0(\rho_0(m))} e^{i\Im\l_0(x)} + \sum_{w\in
W_{\Re\l_0} \setminus W_{\l_0}} \! \!
\frac{b_w(m;\l_0)\pi_{w,\l_0}(x)}{c_0\pi_0(x)}
e^{i w\Im\l_0(x)} \Big)\Big|  \\
\leq C_1 (1+\b(x))^{-1} +C_2 (1+ \b(x))^{|\Sigma_{\l_0}^+|}
e^{-b\beta(x)}\,,
\end{multline}
where $\beta(x)$ is the minimum of $\alpha(x)$ over the simple roots $\alpha\in \Sigma^+$
and the term $C_1 (1+\b(x))^{-1}$ on the right-hand side of
(\ref{eq:restF-est}) does not occur if $\inner{\alpha}{\l_0}\neq 0$ for all $\alpha\in \Sigma$.
\end{Thm}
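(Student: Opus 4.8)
The plan is to follow the three-step reduction of \cite{NPP} that produced the analogous expansion for nonnegative multiplicities, and to verify that each step survives the passage to a standard multiplicity $m\in\mathcal{M}_1$. Recall that for generic $\lambda$ the function $F_\lambda(m)$ equals $\sum_{w\in W}c(m;w\lambda)\Phi_{w\lambda}(m)$ on $\fa^+$, where $\Phi_\lambda(m;x)=e^{(\lambda-\rho(m))(x)}\sum_{\mu\in 2\Lambda}\Gamma_\mu(m;\lambda)e^{-\mu(x)}$ and the $\Gamma_\mu(m;\lambda)$ are the meromorphic coefficients fixed by the stated recursion. First I would carry out Step I and locate the singularities of $c(m;\cdot)$ and of the $\Gamma_\mu(m;\cdot)$ at $\lambda=\lambda_0$: the recursion involves $m$ only through the scalars $m_\alpha$ and the inner products, so the poles of $\Gamma_\mu$ lie on exactly the hyperplanes of \cite[Lemma 2.3]{NPP}, and the poles of $c(m;\cdot)$ arise from the same Gamma factors. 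Hence the polynomial $p$ of Step II and the differential operator $\partial(\pi)$ of Step III may be taken identical to those of \cite{NPP}, and $F_{\lambda_0}(m)=a\,\partial(\pi)\big(pF_\lambda(m)\big)\big|_{\lambda=\lambda_0}$ furnishes a convergent series for $F_{\lambda_0}(m)$ on $\fa^+$.

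The only place where nonnegativity of $m$ was genuinely used in \cite{NPP} is in pinning down the leading term, through the nonvanishing of the constant $b_0(m;\lambda_0)$ of \cite[Lemma 2.6]{NPP}. I therefore expect the main obstacle to be the verification that $b_0(m;\lambda_0)\neq 0$ for standard multiplicities: expressing $b_0$ via the relevant residue of the $c$-function, its nonvanishing is equivalent to the nonsingularity of the product of Gamma factors displayed in \eqref{eq:b-nonzero}, and it is precisely here that the sign conditions defining $\mathcal{M}_1$ must be used rather than mere nonnegativity.

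To settle this, I would inspect the arguments of the four families of Gamma factors in \eqref{eq:b-nonzero}. Since $\Gamma$ has poles only at the non-positive integers, it suffices to show that each argument has strictly positive real part. Using $(\lambda_0)_\alpha=\inner{\lambda_0}{\alpha}/\inner{\alpha}{\alpha}$ together with the hypothesis $\Re\lambda_0\in\overline{(\fa^*)^+}$, one has $\Re(\lambda_0)_\alpha\geq 0$ for every $\alpha\in\Sigma^+$, so the four families of arguments have real parts bounded below by $\tfrac{m_\rms}{4}+\tfrac12$, by $\tfrac{m_\rms+2m_\rml}{4}$, by $\tfrac{m_\rmm}{4}+\tfrac12$, and by $\tfrac{m_\rmm}{4}$, respectively. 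The defining inequalities $m_\rms>0$, $m_\rmm>0$ and $m_\rms+2m_\rml>0$ of $\mathcal{M}_1$ render all four strictly positive, so \eqref{eq:b-nonzero} is nonsingular and $b_0(m;\lambda_0)\neq 0$. This identifies the main term of the expansion as $\frac{b_0(m;\lambda_0)}{\pi_0(\rho_0(m))}\pi_0(x)e^{(\lambda_0-\rho(m))(x)}$.

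With the leading term secured and the convergent series in hand, the estimate \eqref{eq:restF-est} would follow by the same remainder analysis as in \cite[Theorem 2.11]{NPP}: the tail of the series is majorized by the exponentially decaying term $C_2(1+\beta(x))^{|\Sigma_{\lambda_0}^+|}e^{-b\beta(x)}$, the subleading contributions indexed by $W_{\Re\lambda_0}\setminus W_{\lambda_0}$ are split off explicitly, and the extra factor $C_1(1+\beta(x))^{-1}$ appears exactly when some $\inner{\alpha}{\lambda_0}=0$ forces a confluence in the $c$-function. None of these majorizations uses the sign of an individual $m_\alpha$, only the convergence and growth bounds for the $\Gamma_\mu(m;\lambda)$, which are unchanged for $m\in\mathcal{M}_1$; hence the whole argument transfers to standard multiplicities.
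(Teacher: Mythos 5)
Your proposal is correct and follows essentially the same route as the paper: both reduce to the three-step scheme of \cite{NPP}, observe that the singular hyperplanes of the $c$-function and of the coefficients $\Gamma_\mu(m;\lambda)$ are unchanged so the same polynomial $p$ and operator $\partial(\pi)$ apply, and identify the nonvanishing of $b_0(m;\lambda_0)$ --- i.e.\ the nonsingularity of the Gamma-factor product \eqref{eq:b-nonzero} --- as the only point where the defining inequalities $m_\rms>0$, $m_\rmm>0$, $m_\rms+2m_\rml>0$ of $\mathcal{M}_1$ enter. Your explicit lower bounds $\tfrac{m_\rms}{4}+\tfrac12$, $\tfrac{m_\rms+2m_\rml}{4}$, $\tfrac{m_\rmm}{4}+\tfrac12$, $\tfrac{m_\rmm}{4}$ on the real parts of the Gamma arguments merely spell out what the paper dismisses as ``clear,'' so no gap remains.
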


Notice that, for fixed $x_0\in \fa^+$, we have $\beta(x) \asymp |x|$ as $x\to \infty$ in $ x_0+\overline{\fa^+}$, where $|x|$ is the Euclidean norm on $\fa$.

Likewise, one can extend to $m\in \mathcal{M}_{1}$ the following corollary, which restates Theorem \ref{thm:hc-expansion} in the special case where $\l_0\in \overline{(\mathfrak{a}^*)^+}$.

\begin{Cor} \label{cor:leading-termF-realcase}
Suppose $m\in \mathcal{M}_{1}$.  Let $\l_0 \in \overline{(\mathfrak{a}^*)^+}$, and let 
$x_0 \in \mathfrak{a}^+$ be fixed. Then there are constants $C_1>0$, $C_2>0$ and $b>0$
(depending on $m$, $\l_0$ and $x_0$) so that for all $x \in x_0+
\overline{\mathfrak{a}^+}$:
\begin{multline} \label{eq:restF-est-realcase}
\Big| F_{\l_0}(m; x) -\frac{b_0(m;\l_0)}{\pi_0(m;\rho_0)} \pi_0(x) e^{(\l_0-\rho(m))(x)}\Big|  \leq \\
\leq \big[ C_1 (1+\b(x))^{-1} +C_2 (1+ \b(x))^{|\Sigma_{\l_0}^+|}
e^{-b\beta(x)}\big] \pi_0(x) e^{(\l_0-\rho(m))(x)}\,.
\end{multline}
The term $C_1 (1+\b(x))^{-1}$ on the right-hand side of
(\ref{eq:restF-est-realcase}) does not occur if $\inner{\alpha}{\l_0}\neq 0$ for all $\alpha\in \Sigma$.
\end{Cor}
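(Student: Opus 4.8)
The plan is to read off Corollary \ref{cor:leading-termF-realcase} from Theorem \ref{thm:hc-expansion} by specializing to a real spectral parameter. The hypothesis $\l_0\in\overline{(\fa^*)^+}$ forces $\l_0\in\fa^*$, hence $\Re\l_0=\l_0$ and $\Im\l_0=0$; in particular $\Re\l_0\in\overline{(\fa^*)^+}$, so Theorem \ref{thm:hc-expansion} applies to $\l_0$.

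The key observation is that the bracketed main term in \eqref{eq:restF-est} collapses to a single summand. Since $\Im\l_0=0$, every exponential $e^{iw\Im\l_0(x)}$ equals $1$; and since $\l_0=\Re\l_0$, the stabilizers coincide, $W_{\Re\l_0}=W_{\l_0}$, so that the indexing set $W_{\Re\l_0}\setminus W_{\l_0}$ is empty and the sum drops out entirely. The expression inside the absolute value in Theorem \ref{thm:hc-expansion} therefore reduces to
$$
\frac{F_{\l_0}(m;x)\,e^{-(\l_0-\rho(m))(x)}}{\pi_0(x)}-\frac{b_0(m;\l_0)}{\pi_0(\rho_0(m))}\,.
$$

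Next I would clear denominators. The factor $\pi_0(x)\,e^{(\l_0-\rho(m))(x)}$ is real and strictly positive on the open chamber, where $\pi_0$ is nonvanishing, so multiplying the inequality of Theorem \ref{thm:hc-expansion} through by it turns the left-hand side into exactly $\big|F_{\l_0}(m;x)-\frac{b_0(m;\l_0)}{\pi_0(\rho_0(m))}\pi_0(x)e^{(\l_0-\rho(m))(x)}\big|$, while the bound on the right is simply multiplied by the same positive factor. Identifying $\pi_0(\rho_0(m))$ with $\pi_0(m;\rho_0)$, this is precisely \eqref{eq:restF-est-realcase}, and the clause that the $C_1$-term is absent when $\inner{\a}{\l_0}\neq0$ for all $\a\in\Sigma$ is inherited verbatim from Theorem \ref{thm:hc-expansion}.

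The one point requiring attention — and the step I expect to be the only genuine obstacle, though a mild one — is that Theorem \ref{thm:hc-expansion} is stated on $x_0+\fa^+$, whereas the corollary asserts the estimate on the closure $x_0+\overline{\fa^+}$. To handle this I would shrink the base point: fixing any $\delta\in\fa^+$ and setting $x_0':=x_0-t\delta$ with $t>0$ small enough that $x_0'\in\fa^+$, one checks that $\a(t\delta+y)=t\a(\delta)+\a(y)>0$ for every $y\in\overline{\fa^+}$ and every $\a\in\Sigma^+$, so that $x_0+\overline{\fa^+}\subseteq x_0'+\fa^+$ and all these points are regular. Applying Theorem \ref{thm:hc-expansion} at $x_0'$ then delivers the estimate, with a single choice of constants $C_1,C_2,b$, on the whole of $x_0+\overline{\fa^+}$, completing the argument.
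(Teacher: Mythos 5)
Your proposal is correct and follows essentially the same route as the paper, which presents Corollary \ref{cor:leading-termF-realcase} as nothing more than the restatement of Theorem \ref{thm:hc-expansion} in the special case $\l_0\in\overline{(\mathfrak{a}^*)^+}$: for real $\l_0$ the set $W_{\Re\l_0}\setminus W_{\l_0}$ is empty and the oscillatory factors are $1$, after which one multiplies through by the positive quantity $\pi_0(x)\,e^{(\l_0-\rho(m))(x)}$. Your extra step of shrinking the base point to pass from $x_0+\fa^+$ to $x_0+\overline{\fa^+}$ is a legitimate (and careful) way to handle the closure, which the paper glosses over.
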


It might be useful to observe that for $m\in \mathcal{M}_0$ the only obstruction to \eqref{eq:restF-est} is that $b_0(m;\lambda_0)\neq 0.$ For arbitrary values of $\lambda_0 \in \frakacs$ with $\Re
\lambda_0 \in \overline{(\fa^\ast)^+}$, by \eqref{eq:b-nonzero}, this happens if and only if
$$
m_\rms>-2\,, \qquad m_\rms+2 m_\rml >0 \quad \text{and, if $r>1$} \quad m_\rmm>0\,.
$$

\begin{Cor}
\label{cor:asymptotics-b0}
Let $m\in \mathcal{M}_0$. Then Theorem \ref{thm:hc-expansion} holds for every $\l_0\in \mathfrak{a}_\C^*$ with $\Re\l_0\in\overline{(\mathfrak{a}^*)^+}$ such that
\eqref{eq:b-nonzero} is nonsingular.
\end{Cor}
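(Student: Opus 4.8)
The plan is to observe that the proof of Theorem \ref{thm:hc-expansion}, carried out above for $m\in\mathcal{M}_1$, is modular: the hypothesis $m\in\mathcal{M}_1$ enters in exactly one place, while every other ingredient already works on the larger set $\mathcal{M}_0$. So first I would revisit the three-step procedure of \cite{NPP} recalled above and pin down precisely where the assumption on $m$ is actually used, rather than reproving anything from scratch.

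The key steps, in order, would be as follows. First, I would record that Step I, and the consequent choice of the polynomial $p$ and of the differential operator $\partial(\pi)$ in Steps II and III, depend on $m$ only through the \emph{location} of the singularities of Harish-Chandra's $c$-function and of the coefficients $\Gamma_\mu(m;\l)$; by \cite[Lemma 2.3]{NPP} these lie in a fixed union of hyperplanes for every $m\in\mathcal{M}_0$ (the natural domain on which both the $c$-function and the $\Gamma_\mu(m)$ are defined), so the same $p$, the same $\partial(\pi)$, the same series expansion, and the same error estimate \eqref{eq:restF-est} are produced exactly as before. Second, I would isolate the single step that genuinely used $m\in\mathcal{M}_1$, namely the verification that the leading constant $b_0(m;\l_0)$ of \cite[Lemma 2.6]{NPP} does not vanish; for $m\in\mathcal{M}_1$ and $\Re\l_0\in\overline{(\fa^*)^+}$ this was automatic because the expression in \eqref{eq:b-nonzero} is then nonsingular. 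Since $b_0(m;\l_0)\neq 0$ is equivalent to the nonsingularity of \eqref{eq:b-nonzero} at $\l_0$, I would simply replace the appeal to $m\in\mathcal{M}_1$ by the direct hypothesis of the corollary, after which the identification of the main term and the bound on the remainder go through verbatim, yielding the theorem for the given $m\in\mathcal{M}_0$ and $\l_0$.

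The hard part is bookkeeping rather than a new estimate: one must confirm that \emph{nothing} in the argument of \cite{NPP} silently exploited positivity of $m$ or membership in $\mathcal{M}_1$ beyond the hyperplane containment and the nonvanishing of $b_0(m;\l_0)$. Concretely, the meromorphic continuation and the convergence of $\sum_{\mu}\Gamma_\mu(m;\l)e^{-\mu(x)}$, together with the decay rates appearing in the remainder of \eqref{eq:restF-est}, must be checked to persist for all $m\in\mathcal{M}_0$. This is exactly the content of the preceding observation that for $m\in\mathcal{M}_0$ the sole obstruction to \eqref{eq:restF-est} is $b_0(m;\l_0)\neq 0$; once that is verified, the corollary follows immediately.
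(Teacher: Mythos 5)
Your proposal is correct and follows essentially the same route as the paper: the paper's own justification is precisely the observation (stated just before the corollary) that for $m\in\mathcal{M}_0$ the sole obstruction to \eqref{eq:restF-est} is the nonvanishing of $b_0(m;\l_0)$, which is equivalent to the nonsingularity of \eqref{eq:b-nonzero}, so the three-step procedure of \cite{NPP} — with the same hyperplane analysis from \cite[Lemma 2.3]{NPP}, the same polynomial $p$ and operator $\partial(\pi)$ — goes through once the hypothesis of the corollary replaces the appeal to $m\in\mathcal{M}_1$. Your added bookkeeping remark about checking that no other step silently uses positivity is exactly the content of the paper's phrase ``a careful examination shows that the same proofs go through.''
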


\subsection{Sharp estimates}
\label{subsection:sharp}

In this subsection we assume that $m\in \mathcal{M}_3$.
Let $\mathcal{M}_3^0$ denote the interior of $\mathcal{M}_3$.
Since $\mathcal{M}_3^0 \subset \mathcal{M}_1$, the results of both subsections \ref{subsection:estimates} and \ref{subsection:asymptotics} are available on $\mathcal{M}_3^0$.

Let $\l\in \mathfrak{a}^*$. Using the nonsymmetric hypergeometric
functions $G_\l(m)$, their relation to the hypergeometric
function, and the system of differential-reflection equations they satisfy, Schapira proved in \cite{Sch08} the following local Harnack principle for the hypergeometric function $F_{\l}(m)$: for all $x \in \overline{\mathfrak{a}^+}$
\begin{equation} \label{eq:localHarnack}
\nabla F_{\l}(m;x)=-\frac{1}{|W|} \sum_{w \in W}
w^{-1}(\rho(m)-\l)G_{\l}(m;wx)\,,
\end{equation}
the gradient being taken with respect to the space variable $x\in
\mathfrak{a}$. It holds for every multiplicity function $m$ for which both $G_\l(m)$ and $F_\l(m)$ are defined. See \cite[Lemma 3.4]{Sch08}. Since
$\partial_\xi F=\inner{\nabla F}{\xi}$ and since $G_\l(m)$ and $F_\l(m)$ are real and non-negative for $m\in \mathcal{M}_3$, one obtains as in \textit{loc. cit.}  that for all $\xi \in \mathfrak{a}$
$$
\partial_\xi \Big( e^{K_\xi \frac{\inner{\xi}{\cdot}}{|\xi|^2}} F_{\l}(m;\cdot) \Big) \geq 0\,,
$$
where $K_\xi=\max_{w\in W} (\rho(m)-\l)(w\xi)$. See Appendix \ref{appendix:B} for a proof.
This in turn yields
the following subadditivity property, which is implicit in
\cite{Sch08} for $m\in \mathcal{M}_+$ and in fact holds also for $m\in \mathcal{M}_3^0$ and, by continuity, on $\mathcal{M}_3$.

\begin{Lemma}\label{lemma:subadd-Schapira}
Suppose $m\in \mathcal{M}_3$.
Let $\l\in\mathfrak{a}^*$. Then for all $x, x_1 \in \mathfrak{a}$ we have
\begin{equation} \label{eq:subadd-Schapira}
F_{\l}(m; x+x_1) e^{\min_{w\in W} (\rho(m)-\l)(wx_1)}
\leq F_\l(m; x) \leq F_{\l}(m; x+x_1) e^{\max_{w\in W}
(\rho(m)-\l)(wx_1)}\,.
\end{equation}
In particular, if $\l \in \overline{(\mathfrak{a}^*)^+}$ and $x_1 \in
\mathfrak{a}^+$, then
\begin{equation} \label{eq:subadd-Schapira-special}
F_{\l}(m; x+x_1) e^{-(\rho(m)-\l)(x_1)} \leq
F_{\l}(m; x) \leq F_{\l}(m; x+x_1)
e^{(\rho(m)-\l)(x_1)}\,
\end{equation}
for all $x \in \mathfrak{a}$.
\end{Lemma}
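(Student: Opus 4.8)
The plan is to extract both inequalities in \eqref{eq:subadd-Schapira} from the single monotonicity statement recorded just above the lemma: for every $\xi\in\fa$ the function $y\mapsto e^{K_\xi\inner{\xi}{y}/\abs{\xi}^2}F_\l(m;y)$ is non-decreasing in the direction $\xi$, where $K_\xi=\max_{w\in W}(\rho(m)-\l)(w\xi)$. I would treat this as the only analytic input. It follows from the gradient formula \eqref{eq:localHarnack}: using $\partial_\xi F_\l=\inner{\nabla F_\l}{\xi}$ and $F_\l(m;y)=\frac{1}{\abs{W}}\sum_{w\in W}G_\l(m;wy)$ from \eqref{eq:F-G}, a direct computation gives
\begin{equation*}
\partial_\xi\big(e^{K_\xi\inner{\xi}{\cdot}/\abs{\xi}^2}F_\l(m;\cdot)\big)(y)=\frac{e^{K_\xi\inner{\xi}{y}/\abs{\xi}^2}}{\abs{W}}\sum_{w\in W}\big(K_\xi-(\rho(m)-\l)(w\xi)\big)G_\l(m;wy),
\end{equation*}
where every summand is non-negative by the choice of $K_\xi$ and by the strict positivity of $G_\l(m)$ for $\l\in\fa^*$, $m\in\mathcal{M}_3$ (Proposition \ref{prop:positivity-estimates-M3}).

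Granting the monotonicity, the general case is pure bookkeeping. Fixing $x,x_1\in\fa$ with $x_1\neq0$ (the case $x_1=0$ being trivial), I would take $\xi=x_1$ and evaluate the monotone function at the two endpoints of the segment $t\mapsto x+tx_1$, $t\in[0,1]$. The common positive factor $e^{K_{x_1}\inner{x_1}{x}/\abs{x_1}^2}$ cancels, leaving $F_\l(m;x)\leq e^{K_{x_1}}F_\l(m;x+x_1)$, which is the upper bound in \eqref{eq:subadd-Schapira}. For the lower bound I would apply the same inequality with base point $x+x_1$ and increment $-x_1$ and use $K_{-x_1}=\max_{w}(\rho(m)-\l)(-wx_1)=-\min_{w}(\rho(m)-\l)(wx_1)$; rearranging gives $F_\l(m;x+x_1)e^{\min_{w}(\rho(m)-\l)(wx_1)}\leq F_\l(m;x)$.

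For the specialization \eqref{eq:subadd-Schapira-special} I would simply evaluate the two extrema under the hypotheses $\l\in\overline{(\fa^*)^+}$ and $x_1\in\fa^+$. Since $x_1$ is strictly dominant, the function $w\mapsto(\rho(m)-\l)(wx_1)=\big(w^{-1}(\rho(m)-\l)\big)(x_1)$ is maximized over the orbit $W(\rho(m)-\l)$ at its dominant representative and minimized at its antidominant one; because the longest element of a Weyl group of type $BC$ is $w_0=-\id$, the minimum equals the negative of the maximum. Once $\rho(m)-\l$ lies in the closed dominant chamber the maximum sits at $w=e$, so the two extrema are $\pm(\rho(m)-\l)(x_1)$ and \eqref{eq:subadd-Schapira} reduces to \eqref{eq:subadd-Schapira-special}.

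The hard part is entirely upstream, in the monotonicity itself (the passage from \eqref{eq:localHarnack} to the displayed non-negativity), carried out in Appendix \ref{appendix:B}; its only nontrivial ingredient is the strict positivity of $F_\l(m)$ and $G_\l(m)$ for $m\in\mathcal{M}_3$ supplied by Proposition \ref{prop:positivity-estimates-M3}. The one step in the present argument that needs care is the identification of the two orbit extrema, which relies on $x_1$ being strictly dominant together with $\rho(m)-\l$ sitting in the closed dominant chamber (so that the maximum is at $w=e$); the minimum then follows automatically from $w_0=-\id$. Finally, since $\mathcal{M}_3^0\subset\mathcal{M}_1$ carries the full positivity and asymptotic machinery, I would first establish \eqref{eq:subadd-Schapira} and \eqref{eq:subadd-Schapira-special} on the interior $\mathcal{M}_3^0$ and then extend them to all of $\mathcal{M}_3$ by continuity in $m$, as indicated before the statement.
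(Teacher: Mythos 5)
Your proof of the two-sided bound \eqref{eq:subadd-Schapira} is correct and is essentially the paper's own argument (Appendix \ref{appendix:B}): the same monotonicity of $t\mapsto e^{K_\xi\inner{\xi}{x+t\xi}/|\xi|^2}F_\lambda(m;x+t\xi)$, obtained from \eqref{eq:localHarnack} and the positivity of $G_\lambda(m)$ on $\mathcal{M}_3$ (Proposition \ref{prop:positivity-estimates-M3}), applied once with increment $x_1$ and once from the base point $x+x_1$ with increment $-x_1$, using $K_{-x_1}=-\min_{w\in W}(\rho(m)-\lambda)(wx_1)$.

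The gap is in your reduction of \eqref{eq:subadd-Schapira-special} to \eqref{eq:subadd-Schapira}. The identification $\max_{w}(\rho(m)-\lambda)(wx_1)=(\rho(m)-\lambda)(x_1)$ holds only when $\rho(m)-\lambda$ is dominant; this is not among the hypotheses and does not follow from $\lambda\in\overline{(\mathfrak{a}^*)^+}$ (take $\lambda$ large, e.g.\ $\lambda=2\rho(m)$). When $\rho(m)-\lambda$ is not dominant, the maximum over the orbit is attained at its dominant representative $(\rho(m)-\lambda)^{+}\neq\rho(m)-\lambda$, and both of your extrema identifications fail. Moreover, no argument can close this gap, because \eqref{eq:subadd-Schapira-special} as printed is false in that regime: taking $x=0$ and using $F_\lambda(m;0)=1$, its two inequalities combine to $e^{(\lambda-\rho(m))(x_1)}\leq F_\lambda(m;x_1)\leq e^{(\rho(m)-\lambda)(x_1)}$, which forces $(\lambda-\rho(m))(x_1)\leq 0$; this fails for large dominant $\lambda$. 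What does follow from \eqref{eq:subadd-Schapira} under the stated hypotheses is the bound with exponents $\pm(\lambda+\rho(m))(x_1)$: since $\rho(m)\in\overline{(\mathfrak{a}^*)^+}$ for $m\in\mathcal{M}_3$ and $w_0=-\id$ in type $BC$, one has $\max_w(\rho(m)-\lambda)(wx_1)\leq\rho(m)(x_1)+\lambda(x_1)$ and $\min_w(\rho(m)-\lambda)(wx_1)\geq-\rho(m)(x_1)-\lambda(x_1)$. This $(\lambda+\rho(m))$-version is exactly the form the paper actually invokes later (proof of Corollary \ref{cor:pos-est-ell}(c)), and it is what you should prove; note that the paper's Appendix \ref{appendix:B} itself establishes only \eqref{eq:subadd-Schapira} and asserts the ``in particular'' part without proof. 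Alternatively, keep your argument and add the hypothesis $\rho(m)-\lambda\in\overline{(\mathfrak{a}^*)^+}$, under which your orbit-extrema identification, and hence \eqref{eq:subadd-Schapira-special}, is correct.
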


Together with Corollary \ref{cor:leading-termF-realcase}, the
above lemma yields the following global estimates of
$F_{\l}(m; x).$

\begin{Thm}\label{thm:real}
Let $m\in \mathcal{M}_3^0 \subset \mathcal{M}_1$ and $\l_0 \in \overline{(\mathfrak{a}^*)^+}$. Then for all $x \in
\overline{\mathfrak{a}^+}$ we have
\begin{equation}
F_{\l_0}(m; x) \asymp \big[\prod_{\a\in\Sigma_{\l_0}^0}
(1+\a(x))\big] e^{(\l_0-\rho(m))(x)}\,,
\end{equation}
where $\Sigma_{\l_0}^0=\{ \a\in \Sigma^+_\rms\cup \Sigma^+_\rmm:\inner{\a}{\l_0}=0\}$.
\end{Thm}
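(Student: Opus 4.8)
The plan is to derive the global two-sided estimate from the pointwise asymptotics of Corollary~\ref{cor:leading-termF-realcase} together with the subadditivity of Lemma~\ref{lemma:subadd-Schapira}, in the spirit of \cite{NPP}; by $W$-invariance it is enough to argue for $x\in\overline{\mathfrak{a}^+}$. I would first pin down the leading term of \eqref{eq:restF-est-realcase}. Since $m\in\mathcal{M}_3^0\subset\mathcal{M}_1$ and $\l_0\in\overline{(\mathfrak{a}^*)^+}$, the expression \eqref{eq:b-nonzero} is nonsingular, so $b_0(m;\l_0)\neq0$; as $F_{\l_0}(m;\cdot)$ is real and strictly positive by Proposition~\ref{prop:positivity-estimates-M3}, the leading coefficient $b_0(m;\l_0)/\pi_0(\rho_0(m))$ is in fact strictly positive. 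Moreover, by its definition in \cite{NPP} and the form of the right-hand side of \eqref{eq:restF-est-realcase}, the polynomial $\pi_0$ is a nonzero constant multiple of $\prod_{\a\in\Sigma_{\l_0}^0}\a$, so $\pi_0(x)\asymp\prod_{\a\in\Sigma_{\l_0}^0}(1+\a(x))$ on any set where the roots of $\Sigma_{\l_0}^0$ are bounded away from $0$.

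Next I would promote the pointwise asymptotics to a uniform two-sided bound on a fixed deep region. Writing $\b(x)$ for the minimum of $\a(x)$ over the simple roots, as in Theorem~\ref{thm:hc-expansion}, and applying Corollary~\ref{cor:leading-termF-realcase} at a fixed interior base point produces constants $C_1,C_2,b$ for which the bracketed error in \eqref{eq:restF-est-realcase} is at most $C_1(1+\b(x))^{-1}+C_2(1+\b(x))^{|\Sigma_{\l_0}^+|}e^{-b\b(x)}$. This error tends to $0$ as $\b(x)\to\infty$, so for $R$ large it stays below $\tfrac12\,b_0(m;\l_0)/\pi_0(\rho_0(m))$ on the region $D_R=\{x\in\overline{\mathfrak{a}^+}:\b(x)\ge R\}$. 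On $D_R$ this yields, with constants independent of $x$,
\[
F_{\l_0}(m;x)\asymp\pi_0(x)\,e^{(\l_0-\rho(m))(x)}\asymp\Big[\prod_{\a\in\Sigma_{\l_0}^0}(1+\a(x))\Big]e^{(\l_0-\rho(m))(x)}\,.
\]

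Finally I would transfer the estimate from $D_R$ to all of $\overline{\mathfrak{a}^+}$ by a single subadditivity step, the decisive point being to shift \emph{deeply}. Choose $x_0\in\mathfrak{a}^+$ with $\a(x_0)\ge R$ for every simple root $\a$; then $\a(x+x_0)=\a(x)+\a(x_0)\ge R$ for all $x\in\overline{\mathfrak{a}^+}$, so $x+x_0\in D_R$ irrespective of how close $x$ lies to a wall. Now \eqref{eq:subadd-Schapira-special} with $x_1=x_0$ gives $F_{\l_0}(m;x)\asymp F_{\l_0}(m;x+x_0)$ with implied constants $e^{\pm(\rho(m)-\l_0)(x_0)}$, and combining this with the $D_R$-estimate and the elementary comparisons $1+\a(x+x_0)\asymp 1+\a(x)$ and $e^{(\l_0-\rho(m))(x+x_0)}\asymp e^{(\l_0-\rho(m))(x)}$ (constants depending only on the fixed $x_0$) produces the asserted $\asymp$ for every $x\in\overline{\mathfrak{a}^+}$.

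The step I expect to be delicate is exactly this last transfer near the walls. If $x$ runs to infinity along a wall while roots of $\Sigma_{\l_0}^0$ keep growing, then $\pi_0(x)\to\infty$ but $\b(x)$ stays bounded, so the error term in \eqref{eq:restF-est-realcase} is of the same size as the main term and, on its own, gives no lower bound; a shift by an arbitrary fixed interior point does not cure this, since the wall root it must cross stays bounded. Pushing every simple root past the threshold $R$ by a fixed deep $x_0$ is what resolves it, landing $x+x_0$ in the uniformly controlled region $D_R$ at the cost of only the bounded factor $e^{(\rho(m)-\l_0)(x_0)}$. Along the way I would also check that $\pi_0(\rho_0(m))\neq0$ and that Proposition~\ref{prop:positivity-estimates-M3} is available here, which it is because $\mathcal{M}_3^0\subset\mathcal{M}_3$.
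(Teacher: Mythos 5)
Your proposal is correct and takes essentially the same route as the paper, whose proof is just the citation ``Same as in \cite[Theorem 3.4]{NPP}'': that argument likewise combines the leading-term asymptotics of Corollary \ref{cor:leading-termF-realcase} on a region deep inside the chamber with the subadditivity of Lemma \ref{lemma:subadd-Schapira} applied to a fixed deep shift $x_0\in\mathfrak{a}^+$ to reach the walls. Your treatment of the wall issue (pushing every simple root past the threshold $R$) and the checks that $b_0(m;\l_0)\neq 0$ and that Lemma \ref{lemma:subadd-Schapira} applies on $\mathcal{M}_3^0\subset\mathcal{M}_3$ are exactly the points that make the transfer work there as well.
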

\begin{proof}
Same as in \cite[Theorem 3.4]{NPP}.
\end{proof}

We end this section with the following characterization of the bounded hypergeometric functions corresponding to multiplicity functions $m \in \mathcal{M}_1.$

\begin{Thm}\label{thm:bddhyp}
Let $m \in \mathcal{M}_1.$ Then $F_\lambda(m)$ is bounded if and only if $\lambda \in C(\rho(m)) + i \fa^\ast,$ where $C(\rho(m))$ is the convex hull of
the set $\{w\rho(m):~w \in W\}.$
\end{Thm}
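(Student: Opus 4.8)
The plan is to prove both implications after two reductions that exploit the $W$-invariance of $F_\lambda(m)$ in the spectral parameter (so that $F_\lambda(m)=F_{w\lambda}(m)$ for all $w\in W$) and in the space variable (so that $F_\lambda(m)$ is bounded on $\mathfrak{a}$ if and only if it is bounded on $\overline{\mathfrak{a}^+}$). Since $C(\rho(m))$ and the tube $C(\rho(m))+i\mathfrak{a}^*$ are $W$-stable, I may assume throughout that $\lambda_0:=\Re\lambda$ is dominant. Two facts are recorded first. Because $m\in\mathcal{M}_1$ forces each coefficient $\tfrac12\big(\tfrac{m_\rms}{2}+m_\rml+(j-1)m_\rmm\big)$ in \eqref{eq:rho} to be strictly positive, a direct computation gives $\langle\rho(m),\alpha\rangle>0$ for every $\alpha\in\Sigma^+$, i.e. $\rho(m)$ is \emph{strictly} dominant. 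Moreover, by computing the support function of the polytope $C(\rho(m))$ and using that $\rho(m)$ is dominant, one checks that for dominant $\mu$ one has $\mu\in C(\rho(m))$ if and only if $(\rho(m)-\mu)(x)\ge 0$ for all $x\in\overline{\mathfrak{a}^+}$, equivalently $\rho(m)-\mu=\sum_i c_i\alpha_i$ with all $c_i\ge 0$, the $\alpha_i$ denoting the simple roots of $\Sigma^+$.

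For sufficiency, suppose $\lambda_0\in C(\rho(m))$. Since $\mathcal{M}_1\subseteq\mathcal{M}_+\cup\mathcal{M}_3$, the estimate $|F_\lambda(m)|\le F_{\lambda_0}(m)$ of Proposition \ref{prop:positivity}(b) (extended in Proposition \ref{prop:positivity-estimates-M3}) reduces us to bounding the real positive function $F_{\lambda_0}(m)$ on $\overline{\mathfrak{a}^+}$. By Theorem \ref{thm:real} (equivalently the upper bound in Corollary \ref{cor:leading-termF-realcase}, combined with continuity on compacta) it suffices to show that $\prod_{\alpha\in\Sigma_{\lambda_0}^0}(1+\alpha(x))\,e^{(\lambda_0-\rho(m))(x)}$ is bounded on $\overline{\mathfrak{a}^+}$. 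Write $\rho(m)-\lambda_0=\sum_{i\in J}c_i\alpha_i$ with $c_i>0$ and set $J'=\{i:\langle\alpha_i,\lambda_0\rangle=0\}$. The crux is the inclusion $J'\subseteq J$: for $i_0\in J'$ one has $\langle\alpha_{i_0},\rho(m)-\lambda_0\rangle=\langle\alpha_{i_0},\rho(m)\rangle>0$ by strict dominance, whereas $\sum_{i\in J}c_i\langle\alpha_{i_0},\alpha_i\rangle$ can be positive only if its diagonal term is present, because $\langle\alpha_{i_0},\alpha_i\rangle\le 0$ for $i\ne i_0$; hence $c_{i_0}>0$. Since every $\alpha\in\Sigma_{\lambda_0}^0$ lies in the root subsystem orthogonal to $\lambda_0$, whose simple roots are exactly $\{\alpha_i:i\in J'\}$, the polynomial $\prod_{\alpha}(1+\alpha(x))$ depends only on the coordinates $\alpha_i(x)$ with $i\in J'\subseteq J$; as $e^{(\lambda_0-\rho(m))(x)}=\prod_{i\in J}e^{-c_i\alpha_i(x)}$ decays in all these directions on $\overline{\mathfrak{a}^+}$ (and is $\le 1$ in the remaining $J$-directions), the product is bounded.

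For necessity I argue contrapositively. If $\lambda_0\notin C(\rho(m))$, the characterization above yields $x_1\in\mathfrak{a}^+$ with $(\lambda_0-\rho(m))(x_1)>0$, and I may take $x_1$ regular and generic in the open region where this inequality holds. I claim $F_\lambda(m)$ is unbounded along the ray $tx_1$, $t\to\infty$. Writing $\nu=\Im\lambda$ and applying Theorem \ref{thm:hc-expansion} with $x=tx_1$, the quantity $F_\lambda(m;tx_1)\,\pi_0(tx_1)^{-1}e^{-t(\lambda_0-\rho(m))(x_1)}$ equals a finite exponential sum $\sum_w p_w(t)\,e^{itw\nu(x_1)}$, where $p_0$ is a nonzero constant multiple of $b_0(m;\lambda)$ and the remaining $p_w$ grow at most polynomially, up to an error tending to $0$ because $x_1$ is regular. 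Since $b_0(m;\lambda)\ne 0$ for $m\in\mathcal{M}_1$ with $\Re\lambda$ dominant (the nonsingularity of \eqref{eq:b-nonzero}), this sum is not identically zero, so its modulus has strictly positive $\limsup$ in $t$. As $\pi_0(tx_1)e^{t(\lambda_0-\rho(m))(x_1)}\to\infty$, we conclude $\limsup_{t\to\infty}|F_\lambda(m;tx_1)|=\infty$, so $F_\lambda(m)$ is unbounded.

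The main obstacle is the combinatorial step $J'\subseteq J$ in the sufficiency proof: controlling the polynomial prefactor $\prod_{\alpha\in\Sigma_{\lambda_0}^0}(1+\alpha(x))$ in the sharp asymptotics so that it cannot produce polynomial growth along a face of $\overline{\mathfrak{a}^+}$ on which the exponential fails to decay. This is precisely where the hypothesis $m\in\mathcal{M}_1$ enters decisively, through the strict dominance (hence regularity) of $\rho(m)$; on the larger set $\mathcal{M}_0$, where $\rho(m)$ may be singular or vanish, both the argument and the conclusion break down. A secondary technical point is the almost-periodicity argument in the necessity part, needed to exclude accidental cancellation of the oscillatory leading factor along the chosen ray; this proceeds exactly as in \cite{NPP}.
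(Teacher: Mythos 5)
Your proposal is correct, but your sufficiency argument takes a genuinely different route from the paper's. The paper's proof is a one-line delegation: it runs the argument of \cite[Theorem 4.2]{NPP} verbatim, with the estimates of this section substituted for their positive-multiplicity versions. In that argument the ``if'' direction is complex-analytic in $\lambda$: for fixed $x$ one notes that $\lambda\mapsto F_\lambda(m;x)$ is holomorphic and, by Lemma \ref{lemma:OpdamM2M3}, bounded on the tube $C(\rho(m))+i\fa^*$, and a Phragm\'en--Lindel\"of/maximum-principle argument on the tube reduces everything to the vertex tubes $w\rho(m)+i\fa^*$, where $|F_{w\rho(m)+i\nu}(m;x)|\le F_{\rho(m)}(m;x)=1$; this yields the stronger conclusion $|F_\lambda(m;x)|\le 1$ on the whole tube. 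You instead prove sufficiency purely from the real-parameter sharp estimates: $|F_\lambda|\le F_{\Re\lambda}$ (Propositions \ref{prop:positivity} and \ref{prop:positivity-estimates-M3}), then Theorem \ref{thm:real} reduces boundedness to boundedness of $\prod_{\alpha\in\Sigma^0_{\lambda_0}}(1+\alpha(x))\,e^{(\lambda_0-\rho(m))(x)}$ on $\overline{\fa^+}$, which you settle by the combinatorial inclusion $J'\subseteq J$; that inclusion rests on the regularity (strict dominance) of $\rho(m)$, which indeed holds for $m\in\mathcal{M}_1$ (note this uses both $m_\rms+2m_\rml>0$ and, for the roots $(\beta_j-\beta_i)/2$, also $m_\rmm>0$). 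All three ingredients --- regularity of $\rho(m)$, the lemma $J'\subseteq J$, and the fact that $\Sigma^0_{\lambda_0}$ lies in the cone spanned by the simple roots orthogonal to $\lambda_0$ --- are correct. Your necessity argument (ray asymptotics from Theorem \ref{thm:hc-expansion}, nonvanishing of $b_0$ via \eqref{eq:b-nonzero} for $m\in\mathcal{M}_1$ and dominant $\Re\lambda$, and the almost-periodicity step deferred to \cite{NPP}) coincides with the paper's. Comparing what each buys: the paper's route needs only the elementary estimates plus a maximum principle and gives the explicit bound by $F_{\rho(m)}(m)=1$; yours avoids complex analysis in $\lambda$ but leans on the heavier sharp-estimates machinery, and it has the merit of isolating exactly where the hypothesis $m\in\mathcal{M}_1$ enters the ``if'' direction.

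Two small repairs to citations. Theorem \ref{thm:real} is stated only for $m\in\mathcal{M}_3^0$; for $m\in\mathcal{M}_1\cap\mathcal{M}_+$ you should invoke \cite[Theorem 3.4]{NPP} instead --- harmless, since $\mathcal{M}_1=(\mathcal{M}_1\cap\mathcal{M}_+)\cup\mathcal{M}_3^0$. Also, your parenthetical alternative ``Corollary \ref{cor:leading-termF-realcase} combined with continuity on compacta'' does not work as stated: the corollary controls $F_{\lambda_0}$ only on the shifted cone $x_0+\overline{\fa^+}$, and $\overline{\fa^+}\setminus(x_0+\overline{\fa^+})$ is unbounded, so continuity on compacta is not enough; passing from the shifted cone to all of $\overline{\fa^+}$ requires the subadditivity of Lemma \ref{lemma:subadd-Schapira}, which is precisely what the proof of Theorem \ref{thm:real} packages. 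Since Theorem \ref{thm:real} is your primary citation, this does not affect the validity of the argument.
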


\begin{proof}
The same arguments as in \cite[Theorem 4.2]{NPP} work using the results in this section.
\end{proof}


\section{Applications and developments}
\label{section:applications}
In this section we consider two-parameter deformations of the multiplicities in $\mathcal{M}_+$ and study a class of hypergeometric functions associated with them. 
As we shall see in Section \ref{section:geometric}, for specific values of $m\in\mathcal{M}_+$ and of the deformation parameters 
$(\ell,\well)$, these hypergeometric functions turn out to agree with the 
$\tau$-spherical functions on the homogeneous vector bundles over $G/K$, when $\tau$ is a small $K$-type and $G/K$ has root system of type $BC$. 
The general properties proved in this section will provide for  
most of the $BC$ cases in \cite{OS17} symmetry properties, estimates, aymptotics and a characterization of the $\tau$-spherical functions which are bounded.

\subsection{A two-parameter deformation of a multiplicity function}
\label{subsection:deformation}

Let $(\fa,\Sigma,m)$ be a triple as in subsection \ref{subsection:roots}, with $m=(m_\rms,m_\rmm,m_\rml)$. For any two parameters $\ell,\well $ we define a deformation 
$m(\ell,\well)$ of $m$ as follows:
\begin{equation}
\label{eq:mult-l-tl-gen}
m_\a(\ell,\well)=\begin{cases}
m_\rms+2\ell  &\text{if $\a\in \Sigma_\rms$}\\
m_\rmm +2\well &\text{if $\a\in \Sigma_\rmm$}\\
m_\rml-2\ell  &\text{if $\a\in \Sigma_\rml$}\,.
\end{cases}
\end{equation}
We shall suppose in the following that $m\in \mathcal{M}_0$, $\ell, \well\in \R$ and 
$\well\geq -m_\rmm/2$. 
When this does not cause any confusion, we shall shorten the notations $m(\ell,0)$ and $m(0,\well)$ and write $m(\ell)$ and $m(\well)$, respectively. Since $m_\rms(\ell, \well)+m_\rml(\ell, \well)=m_\rms+m_\rml$, the above assumptions ensure that $m(\ell, \well)\in \mathcal{M}_0$.
The two deformations, in $\ell$ and $\well$, are independent. So, $m(\ell,\well)=m(\ell)(\well)=
m(\well)(\ell)$.
Since we are assuming that $\well\geq -m_\rmm/2$, the additional parameter $\well$
does not increase the range of 
possible multiplicities of the middle roots.
Its relevance will appear in the definitions the of $\tauwell$-hypergeometric functions in \eqref{eq:Fell-F} and \eqref{eq:Gell-G}. 

The following theorem shows that every element of
$\mathcal{M}_+ \cup \mathcal{M}_3$ is of the form $m(\ell)$ for some $m\in \mathcal{M}_+$. 
For a fixed $m=(m_\rms,m_\rmm,m_\rml)$ we shall use the notation
\begin{equation}
\label{eq:lmin-lmax}
\ell_{\rm min}(m)=-\frac{m_\rms}{2}\qquad \text{and}\qquad  \ell_{\rm max}(m)=\frac{m_\rms}{2}+m_\rml\,.
\end{equation}
We simply write $\ell_{\rm min}$ and $\ell_{\rm max}$
when this does not cause any ambiguity.

\begin{Lemma}
\label{lemma:M+M3}
Let $m^0=(m^0_\rms, m^0_\rmm,m^0_\rml)\in \mathcal{M}_0$. Then  $m^0\in \mathcal{M}_+ \cup \mathcal{M}_3$ if and only if there are
$m=(m_\rms,m_\rmm,m_\rml)\in \mathcal{M}_+$ and
$\ell\in \R$ so that $m^0=m(\ell)$ and
$\ell \in \big[\ell_{\rm min}(m),\ell_{\rm max}(m)\big]$.
Moreover, $m^0\in \mathcal{M}_1=(\mathcal{M}_+ \cup \mathcal{M}_3)^0$ if and only if  $m=(m_\rms,m_\rmm,m_\rml)\in \mathcal{M}_+$ is as above and
$\ell \in \big]\ell_{\rm min}(m),\ell_{\rm max}(m)\big[$.
\end{Lemma}
\begin{proof}
If $m^0\in \mathcal{M}_+ \cup \mathcal{M}_3$, then
$m^0=m(\ell)$ for
$$
m=(m_\rms,m_\rmm,m_\rml)=(m^0_\rms+m^0_\rml,m^0_\rmm,0)\in \mathcal{M}_+ \quad\text{and} \quad \ell=-\frac{m^0_\rml}{2}\,.$$
Observe that $\ell$ satisfies
$-\frac{m_\rms}{2}=-\frac{m^0_\rms+m^0_\rml}{2}\leq \ell \leq \frac{m^0_\rms+m^0_\rml}{2}=\frac{m_\rms}{2}+m_\rml$
because $m^0_\rms\geq 0$ and $m^0_\rms+2m^0_\rml\geq 0$.
The inequalities for $\ell$ are strict if $m^0 \in \mathcal{M}_1$ since in this case
$m^0_\rms>0$.

Conversely, suppose that $m^0=m(\ell)$ for $m\in \mathcal{M}_+$ and $\ell$ as in the statement.
Then $-m_\rms\leq 2\ell \leq m_\rms+2m_\rml$ and
$m_\rms+m_\rml=m^0_\rms+m^0_\rml$.
Hence 
$$m^0_\rms=m_\rms+2\ell\leq 2(m_\rms+m_\rml)=2(m^0_\rms+m^0_\rml), \quad \text{i.e.} \quad m^0_\rms+2m^0_\rml\geq 0.$$ Moreover,
$m^0_\rms=m_\rms+2\ell\geq m_\rms-m_\rms=0$.
Thus $m^0 \in \mathcal{M}_+\cup \mathcal{M}_3$.
All the inequalities are strict if $\ell \in \big]\ell_{\rm min}(m),\ell_{\rm max}(m)\big[$. Hence, in this case, $m^0 \in \mathcal{M}_1$.
\end{proof}

Lemma \ref{lemma:M+M3} is pictured in Figure \ref{fig:mell} below. Recall that in this paper we consider the root system $C_r$ as a root system $BC_r$ with $m_\rms=0$.
The diagonal segments belong to lines $m_\rms+m_\rml=\text{constant}$. If $m^0\in \mathcal{M}_+ \cup \mathcal{M}_3$ belongs to such a line, the corresponding element $m\in \mathcal{M}_+$ 
in the first part of the proof, is the intersection of this line with the 
$m_\rms$-axis. The specific segments drawn are those passing through geometric multiplicities $(m_\rms,m_\rmm, m_\rml=1)$: the segments contain the values of $(m_\rms(\ell),m_\rml(\ell)) =(m_\rms+2\ell,1-2\ell)$
with $\ell\in \big[\ell_{\rm min},\ell_{\rm max}]$.

\begin{figure}[h] 
\begin{tikzpicture}[
    scale=.5,
    axis/.style={very thick, ->, >=stealth'},
    equation line/.style={thick},
    equation line dashed/.style={thick, dashed},
    reference line/.style={thin},
    ]
\draw [decorate sep={.8mm}{10mm},fill] (0,1) -- (10,1);
\draw [decorate sep={.5mm}{10mm},fill] (0,0) -- (20,0);
\draw [decorate sep={.4mm}{3mm},fill] (2.5,9.5) -- (5,12);
\draw [decorate sep={.4mm}{3mm},fill] (8.5,3.5) -- (11,6);
\draw [decorate sep={.4mm}{3mm},fill] (16.5,-4.5) -- (19,-2);
\draw [dotted] (2,0) -- (2,1);
\draw [dotted] (4,0) -- (4,1);
\draw [dotted] (6,0) -- (6,1);
\draw [dotted] (8,0) -- (8,1);
\draw [dotted] (10,0) -- (10,1);
\draw [dotted] (12,0) -- (12,1);
\draw [dotted] (14,0) -- (14,1);
\draw [dotted] (16,0) -- (16,1);
\draw [dotted] (18,0) -- (18,0);
\draw [dotted] (20,0) -- (20,0);

\draw[fill=black] (16,1) circle (.8mm);

\draw[equation line dashed] (4,13) -- (6,11);
\draw[equation line dashed] (19,-2) -- (21,-4);

    \draw[axis] (-.5,0)  -- (22,0) node(xline)[right]
        {$m_{\mathrm{s}}$};
    \draw[axis] (0,-.5) -- (0,14.5) node(yline)[above]
        {$m_{\mathrm{l}}$\qquad\null};
     \draw[reference line] (0,0) -- (20,-10)
        node[below left, text width=8em, rotate=-28]
        {$m_{\mathrm{s}}+2m_{\mathrm{l}}=0$};
     \draw[reference line] (0,1) -- (22,1)
        node[above, text width=10em, rotate=0]
        {\null\qquad $m_{\mathrm{l}}=1$};
     \draw[equation line] (0,1) -- (2,-1);
     \draw[equation line] (0,3) -- (6,-3);
     \draw[equation line] (0,5) -- (10,-5);
     \draw[equation line] (0,7) -- (14,-7);
     \draw[equation line] (0,9) -- (18,-9);
     \draw[equation line] (0,11) -- (20,-9);
    \draw[equation line] (6,11) -- (19,-2);

\node[left, rotate=-45] at (0,1) {$\Sigma=C_r$ \;};
\node[left, rotate=-45] at (0,3)
        {$\mathrm{SU}(p,p+1)$ \;};
\node[left, rotate=-45] at (0,5)
        {$\mathrm{SO}^*(2(2n+1))$
        \& $\mathrm{SU}(p,p+2)$ \;};
\node[left, rotate=-45] at (0,7)
        {$\mathrm{SU}(p,p+3)$ \;};
\node[left, rotate=-45] at (0,9)
        {$\mathfrak{e}_{6(-14)}$
        \& $\mathrm{SU}(p,p+4)$ \;};
\node[left, rotate=-45] at (0,11)
        {$\mathrm{SU}(p,p+5)$ \;};
\node[left, rotate=-45] at (4,13)
        {$\mathrm{SU}(p,q)$ \;};

\node at (14,8) {$\mathcal{M}_+$};
\node at (19,-5) {$\mathcal{M}_3$};

\draw (-.4,-.2) node[below,scale=.6] {$0$};
  	\draw (2,-.2) node[below,scale=.6] {$2$};
  	\draw (4,-.2) node[below,scale=.6] {$4$};
  	\draw (6,-.2) node[below,scale=.6] {$6$};
  	\draw (8,-.2) node[below,scale=.6] {$8$};
  	\draw (10,-.2) node[below,scale=.6] {$10$};
  	\draw (16,-.2) node[below,scale=.6] {$2(q-p)$};

\node[right, scale=1] at (7,15) {%
\parbox{8truecm}
{
\begin{tabular}{|c|c|c|c|}
\hline
$G$ & $m_\rms$ & $\ell_{\rm min}$
& $\ell_{\rm max}$\\
\hline
with $\Sigma=C_r$ & 0 & 0 & 1\\
\hline
$\SO^*(2(2n+1))$ & 4 & $-2$ & 3\\
\hline
$\mathfrak{e}_{6(-14)}$ & 8 & $-4$ & 5\\
\hline
$\SU(p,q)$ $q>p$ & $2(q-p)$ & $p-q$ & $q-p+1$\\
\hline
\end{tabular}
}};
\end{tikzpicture}
\caption{$(m_\rms(\ell),m_\rml(\ell))$ for geometric
$(m_\rms,m_\rml=1)$ and $\ell\in[\ell_{\min}, \ell_{\max}]$}
\label{fig:mell}
\end{figure}
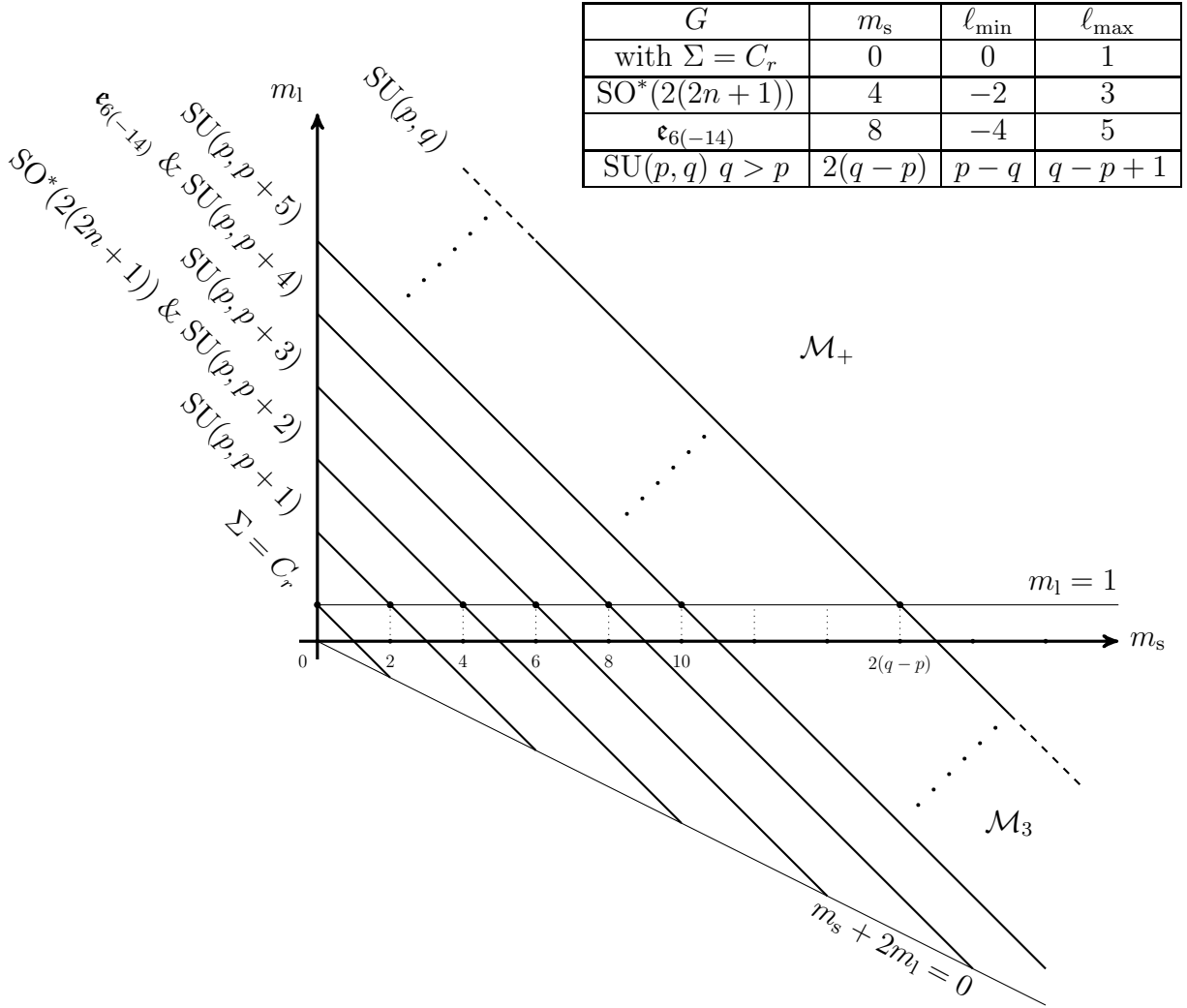


\subsection{$(\ell,\well)$-Cherednik operators}
\label{subsection:Cherednik-two-parameters}
We keep the notation of subsection \ref{subsection:deformation}. Let $u$ and $v$ be the $W$-invariant functions on $\mathfrak{a}$ defined by 
\begin{eqnarray}
\label{eq:u}
&&u(x)=\prod_{j=1}^r  \cosh\Big(\frac{\beta_j(x)}{2}\Big)\,,\\
\label{eq:v}
&&v(x)=\prod_{1\leq i<j\leq r}  \cosh\Big(\frac{\beta_j(x)-\beta_i(x)}{2}\Big)
\cosh\Big(\frac{\beta_j(x)+\beta_i(x)}{2}\Big)\,.
\end{eqnarray}
For $\xi\in \fa$ and $\ell,\well\in \R$ we define the Cherednik operator 
$T_{\ell,\well,\xi}$ by
\begin{equation}
\label{eq:Tellwell}
T_{\ell,\well,\xi}(m)=u^{-\ell} v^{-\well}\circ T_\xi(m(\ell,\well)) \circ u^{\ell}v^{\well}\,.
\end{equation}
A simple computation (using that $u$ and $v$ commute with $1-r_\alpha$ for $\a\in \Sigma$) shows that
$$
T_{\ell,\well,\xi}(m)= T_\xi(m(\ell,\well)) + 
\ell u^{-1} \partial_\xi(u)+\well v^{-1} \partial_\xi(v)\,,
$$
where
\begin{equation}\label{eq:der-u}
u^{-1} \partial_\xi(u)=\frac{1}{2}  \sum_{j =1 }^r
\beta_j(\xi) \tanh \Big( \frac{\beta_j}{2}\Big) 
\end{equation}

and

\begin{equation}\label{eq:der-v}
v^{-1} \partial_\xi(v)=\frac{1}{2}  \sum_{1\leq i<j\leq r}
\Big[(\beta_j(\xi)-\beta_i(\xi)) \tanh \Big( \frac{\beta_j-\beta_i}{2}\Big) +
(\beta_j(\xi)+\beta_i(\xi)) \tanh \Big( \frac{\beta_j+\beta_i}{2}\Big) \Big]\,.
\end{equation}
Let $\mathcal{R}_0$ be the algebra of functions on  $\mathfrak{a}_{\rm reg}$ generated by $1$ and $(1\pm e^{-\alpha})^{-1}$ with $\alpha\in \Sigma^+$; 
see \cite[pp. 63--64]{HC60}. So $\mathcal{R}$ is a subalgebra of $\mathcal{R}_0$. In general, $T_{\ell,\well,\xi} \in  \D_{\mathcal{R}_0} \otimes \C[W]$, where $\D_{\mathcal{R}_0}=
\mathcal{R}_0 \otimes S(\mathfrak{a}_\C)$. However, if $\well=0$, then $T_{\ell,\xi}=T_{\ell,0,\xi} \in  \D_{\mathcal{R}} \otimes \C[W]$.
By construction, 
$\{T_{\ell,\well,\xi}(m) : \xi \in \fa\}$ is a commutative family of differential-reflection operators. So, the map $\xi \to T_{\ell,\well,\xi}(m)$ extends uniquely to an algebra homomorphism $p \to T_{\ell,\well,p}(m)$ from $\polya$ to $\D_{\mathcal{R}_0} \otimes \C[W]$ such that
$T_{\ell,\well,p}(m)=u^{-\ell}v^{-\well} \circ T_p(m(\ell,\well)) \circ v^{\well}u^{\ell}$.
In particular, one can define the $(\ell,\well)$-Heckman-Opdam Laplacian
\begin{equation}
\label{eq:tellHOLaplacian}
T_{\ell,\well,p_L}(m)=\sum_{j=1}^r T_{\ell,\well,\xi_j}(m)^2= u^{-\ell}v^{-\well}\circ T_{p_L}(m(\ell)) \circ u^{\ell}v^{\well}\,,
\end{equation}
where $\{\xi_j\}_{j=1}^r$ is any orthonormal basis of $\fa$ and $p_L$ is defined by $p_L(\l):=\inner{\l}{\l}$ for $\l \in \frakacs$.

Next, we compute the differential part of the $(\ell, \well)$-Heckman-Opdam Laplacian in a closed form, allowing us to deduce some symmetry properties that will be useful in later sections. 
For an arbitrary root system $\Sigma$ on $\mathfrak{a}^*$ and a multiplicity function $m,$ let us set 
\begin{equation}
    \label{fSigma}
 f_\Sigma(m)  = \sum_{\alpha \in \Sigma^+}
\frac{m_\alpha (2-m_\alpha - 2m_{2\alpha}) \langle\alpha, \alpha\rangle}{(e^\alpha - e^{-\alpha})^2}.   
\end{equation}
We start by recalling the following lemma. 

\begin{Lemma}\label{comp-formula1}
Let $\Sigma$ be an arbitrary root system on $\mathfrak{a}^*$ and $m$ a multiplicity function. If $\delta_\Sigma(m)^{\frac{1}{2}} = \prod_{\alpha \in \Sigma^+} (e^\alpha - e^{-\alpha})^{\frac{m_\alpha}{2}},$ then
$$ \delta_{\Sigma}(m)^{\frac{1}{2}} \circ \left ( L_\Sigma(m) + \langle\rho_\Sigma(m), \rho_\Sigma(m)\rangle \right ) \circ \delta_\Sigma(m)^{-\frac{1}{2}}
  = L_{\mathfrak{a}} + f_\Sigma(m), $$
where $L_\Sigma(m)$ is the Heckman-Opdam Laplacian associated to $(\Sigma, m)$ and $\rho_\Sigma(m)$ is the half sum of positive roots.
\end{Lemma}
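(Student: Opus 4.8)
The plan is to verify the conjugation formula by direct computation, treating the operator $L_\Sigma(m)+\langle\rho_\Sigma(m),\rho_\Sigma(m)\rangle$ as a second-order differential operator and conjugating it by the scalar function $\delta_\Sigma(m)^{1/2}$. For any smooth nonvanishing function $\psi$ on $\mathfrak{a}_{\rm reg}$ and any test function $f$, the conjugated Laplacian satisfies $\psi\circ L_{\mathfrak a}\circ \psi^{-1}(f)=L_{\mathfrak a}f+2\langle \nabla\log\psi,\nabla f\rangle+\big(\psi^{-1}L_{\mathfrak a}\psi\big)f$, so first I would record this standard identity with $\psi=\delta_\Sigma(m)^{1/2}$. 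The key quantity is the logarithmic derivative $\nabla\log\delta_\Sigma(m)^{1/2}=\tfrac12\sum_{\alpha\in\Sigma^+}m_\alpha\coth\alpha\,\cdot x_\alpha$, which, using \eqref{eq:coth-alpha}, is exactly the gradient whose inner product against $\nabla f$ produces the first-order term $\sum_{\alpha\in\Sigma^+}m_\alpha\coth\alpha\,\partial_{x_\alpha}f$ of $L(m)$ in \eqref{eq:Laplm}.

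The computation then splits into showing that the first-order terms cancel and that the zeroth-order terms combine into $f_\Sigma(m)$. For the first-order terms, I would observe that conjugating the \emph{full} operator $L_\Sigma(m)=L_{\mathfrak a}+\sum_{\alpha}m_\alpha\coth\alpha\,\partial_{x_\alpha}$ yields a cross term $2\langle\nabla\log\delta^{1/2},\nabla f\rangle$ equal to $+\sum_\alpha m_\alpha\coth\alpha\,\partial_{x_\alpha}f$, while the transport of the existing first-order part of $L_\Sigma(m)$ through the conjugation contributes $-\sum_\alpha m_\alpha\coth\alpha\,\partial_{x_\alpha}f$; these cancel exactly, leaving no first-order term, as required. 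The remaining task is to identify the potential term $\psi^{-1}L_{\mathfrak a}\psi + \sum_\alpha m_\alpha\coth\alpha\,\partial_{x_\alpha}(\log\psi)\cdot(\text{from the drift acting on }\psi)+\langle\rho_\Sigma(m),\rho_\Sigma(m)\rangle$ and to show it equals $f_\Sigma(m)$.

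The computation of the potential is where the real work lies, and it is the step I expect to be the main obstacle. Writing $\psi=\delta^{1/2}$, one must compute $\psi^{-1}L_\Sigma(m)\psi$, which expands into $\sum_\alpha \tfrac{m_\alpha}{2}L_{\mathfrak a}(\log(e^\alpha-e^{-\alpha}))$ plus the square of the gradient $\big|\nabla\log\psi\big|^2$ plus the action of the drift on $\log\psi$. Using $L_{\mathfrak a}\log(e^\alpha-e^{-\alpha})=-\tfrac{4\langle\alpha,\alpha\rangle}{(e^\alpha-e^{-\alpha})^2}$ and carefully expanding $\big|\tfrac12\sum_\alpha m_\alpha\coth\alpha\,x_\alpha\big|^2$, the diagonal terms produce contributions of the form $\tfrac{m_\alpha^2\langle\alpha,\alpha\rangle}{(e^\alpha-e^{-\alpha})^2}$ after using $\coth^2\alpha-1=\tfrac{4}{(e^\alpha-e^{-\alpha})^2}$, while the off-diagonal cross terms $\tfrac14 m_\alpha m_\beta\coth\alpha\coth\beta\langle\alpha,\beta\rangle$ must be shown to recombine — together with the $2m_{2\alpha}$ correction — into the stated coefficient $m_\alpha(2-m_\alpha-2m_{2\alpha})$. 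The constant $\langle\rho_\Sigma(m),\rho_\Sigma(m)\rangle$ is precisely what absorbs the leftover constant (non-decaying) part of these products, since $\rho_\Sigma(m)=\tfrac12\sum_\alpha m_\alpha\alpha$ and the asymptotic value of $\coth\alpha$ is $1$. Since the statement is quoted as a known lemma, I would finally reference the source (this is the computation underlying \eqref{eq:HOLaplacian} and appears in \cite{Sch08,SchThese,HS94}) rather than belaboring every off-diagonal cancellation.
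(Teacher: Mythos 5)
Your overall strategy --- conjugate directly, show the first-order terms cancel, identify the residual potential with $f_\Sigma(m)$ --- is the right one (and it is more than the paper offers: the paper's proof is the single citation to \cite[Theorem 2.1.1]{HS94}). But your setup contains a direction-of-conjugation error that propagates and makes the decisive step fail. In the statement, $\delta_\Sigma(m)^{1/2}\circ D\circ\delta_\Sigma(m)^{-1/2}$ means: multiply by $\delta^{-1/2}$, apply $D$, then multiply by $\delta^{1/2}$. For this order, with $\psi=\delta_\Sigma(m)^{1/2}$, the correct identity is
\begin{equation*}
\psi\circ L_{\mathfrak{a}}\circ\psi^{-1}
=L_{\mathfrak{a}}-2\langle\nabla\log\psi,\nabla\cdot\rangle
+\big(|\nabla\log\psi|^2-L_{\mathfrak{a}}\log\psi\big)\,,
\end{equation*}
whereas the identity you recorded (cross term $+2\langle\nabla\log\psi,\nabla f\rangle$, potential $\psi^{-1}L_{\mathfrak{a}}\psi$) is the one for $\psi^{-1}\circ L_{\mathfrak{a}}\circ\psi$. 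Your cancellation mechanism is also incorrect: conjugating the drift $X=\sum_{\alpha\in\Sigma^+} m_\alpha\coth\alpha\,\partial_{x_\alpha}$ by a scalar function never reverses its first-order part, since $\psi\circ X\circ\psi^{-1}=X-X(\log\psi)$; so the drift cannot contribute $-\sum_\alpha m_\alpha\coth\alpha\,\partial_{x_\alpha}f$ as you claim. The true cancellation is between the Laplacian's cross term $-2\langle\nabla\log\psi,\nabla f\rangle=-\sum_\alpha m_\alpha\coth\alpha\,\partial_{x_\alpha}f$ and the \emph{unchanged} drift. You have two sign errors that compensate at first order, so you land on the right intermediate conclusion, but they do not compensate at order zero.

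At order zero the damage is real. The potential you propose to compute, $\psi^{-1}L_\Sigma(m)\psi=L_{\mathfrak{a}}\log\psi+|\nabla\log\psi|^2+X\log\psi$, is the zeroth-order term of $\delta^{-1/2}\circ L_\Sigma(m)\circ\delta^{1/2}$ (the reversed order); what the lemma requires is $\psi\,L_\Sigma(m)(\psi^{-1})=-L_{\mathfrak{a}}\log\psi+|\nabla\log\psi|^2-X\log\psi$, and the two differ --- indeed the lemma is false for the reversed order. A rank-one test makes this concrete: take $\Sigma^+=\{\alpha\}$, $m_{2\alpha}=0$. The correct potential plus $\langle\rho_\Sigma(m),\rho_\Sigma(m)\rangle$ gives, via $\coth^2\alpha-1=4(e^\alpha-e^{-\alpha})^{-2}$, exactly $m_\alpha(2-m_\alpha)\langle\alpha,\alpha\rangle\,(e^\alpha-e^{-\alpha})^{-2}=f_\Sigma(m)$; your version gives $m_\alpha(3m_\alpha-2)\langle\alpha,\alpha\rangle\,(e^\alpha-e^{-\alpha})^{-2}+m_\alpha^2\langle\alpha,\alpha\rangle$, with the wrong coefficient and a leftover constant that $\langle\rho_\Sigma(m),\rho_\Sigma(m)\rangle$ does not absorb. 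Once the direction is fixed, the remaining issue you flag --- recombining the off-diagonal products $\coth\alpha\coth\beta\,\langle\alpha,\beta\rangle$ and the $m_{2\alpha}$ terms into $f_\Sigma(m)$ plus a constant absorbed by $\langle\rho,\rho\rangle$ --- is indeed the real content, and deferring that to \cite[Theorem 2.1.1]{HS94}, as the paper itself does, is legitimate; but the scaffolding as you wrote it would not reproduce that computation.
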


\begin{proof} See \cite[Theorem 2.1.1]{HS94}
\end{proof}

From now onwards, $\Sigma$ will denote a root system of type $BC$, as in Section \ref{section:notation}.
Recall the operator $\beta$ from \eqref{eq:beta}.
The same definition extends $\beta$ to an operator from 
$\D_{\mathcal{R}_0}\otimes \C[W]$ to $\D_{\mathcal{R}_0}$.
If $p\in \polya$, then $\beta\big(T_{\ell, \well, p}(m)\big)=
u^{-\ell} v^{-\well}\circ \beta\big(T_{p}(m(\ell,\well))\big) \circ u^{\ell} v^{\well}\in \D_{\mathcal{R}_0}$.
Set $D_{\ell, \well,p}(m)=\beta\big(T_{\ell,\well, p}(m)\big)$ for $p\in \polya^W$.
Furthermore, set
\begin{equation}
\label{eq:Dlm}
\D_{\ell, \well}(m)=\{D_{\ell, \well,p}(m): p\in \polya^W\}\,.
\end{equation}

\begin{Lemma}\label{comp-formula2}
Consider the root system $\widetilde{\Sigma} = 2 \Sigma $ and the multiplicity function $\widetilde{m} = (m_\rms+m_\rml, m_\rmm+2\well, 0).$ Then we have the identity 
$$u^{-\frac{m_\rms}{2}} v^{-\frac{m_\rmm}{2}} \circ D_{\ell, \well, p_L}(m) \circ u^{\frac{m_\rms}{2}} v^{\frac{m_\rmm}{2}} = $$
$$L_{\widetilde{\Sigma}} (\widetilde{m}) + \langle\rho_{\widetilde{\Sigma}}(\widetilde{m}), \rho_{\widetilde{\Sigma}}(\widetilde{m})\rangle
 +
f_\Sigma (m(\ell, \widetilde{\ell})) - f_{\widetilde{\Sigma}}(\widetilde{m}).$$
\end{Lemma}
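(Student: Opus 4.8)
The plan is to compute the left-hand side explicitly by combining the defining relation $D_{\ell,\well,p_L}(m)=\beta\big(T_{\ell,\well,p_L}(m)\big)=u^{-\ell}v^{-\well}\circ D_{p_L}(m(\ell,\well))\circ u^{\ell}v^{\well}$ with the closed formula $D_{p_L}(m)=L(m)+\inner{\rho(m)}{\rho(m)}$ from subsection \ref{subsection:hyp}, and then to reconcile the resulting operator (which lives on the root system $\Sigma$) with the operator $L_{\widetilde\Sigma}(\widetilde m)+\inner{\rho_{\widetilde\Sigma}(\widetilde m)}{\rho_{\widetilde\Sigma}(\widetilde m)}$ on the rescaled system $\widetilde\Sigma=2\Sigma$ via Lemma \ref{comp-formula1}. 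First I would conjugate once more: the total conjugating factor on the left-hand side is $u^{-\ell-m_\rms/2}v^{-\well-m_\rmm/2}\circ(\cdots)\circ u^{\ell+m_\rms/2}v^{\well+m_\rmm/2}$ applied to $D_{p_L}(m(\ell,\well))=L(m(\ell,\well))+\inner{\rho(m(\ell,\well))}{\rho(m(\ell,\well))}$. The key observation is that $u$ and $v$ are, up to the exponential normalization hidden in $e^\alpha-e^{-\alpha}$ versus $\cosh$, precisely the pieces of $\delta_\Sigma^{1/2}$ corresponding to the short and middle roots, so the exponents $\ell+m_\rms/2$ and $\well+m_\rmm/2$ are exactly the half-multiplicities $m_\rms(\ell,\well)/2$ and $m_\rmm(\ell,\well)/2$.

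Next I would identify how conjugation by $u^{m_\rms(\ell,\well)/2}v^{m_\rmm(\ell,\well)/2}$ interacts with Lemma \ref{comp-formula1}. The point is that $\delta_\Sigma(m(\ell,\well))^{1/2}$ factors over the three root lengths, and conjugating $L(m(\ell,\well))+\inner{\rho}{\rho}$ by the full factor $\delta_\Sigma^{1/2}$ would produce $L_{\mathfrak a}+f_\Sigma(m(\ell,\well))$. Here, however, we conjugate only by the short- and middle-root portions (the $u$ and $v$ factors, which correspond to the roots $\beta_j/2$ and $(\beta_j\pm\beta_i)/2$), leaving the long-root portion untouched. The long roots $\beta_j$ of $\Sigma$ are exactly the roots of $\widetilde\Sigma=2\Sigma$ after the natural rescaling, and with the multiplicity redistribution $\widetilde m=(m_\rms+m_\rml,m_\rmm+2\well,0)$ the remaining differential operator is precisely $L_{\widetilde\Sigma}(\widetilde m)$ together with its $\inner{\rho_{\widetilde\Sigma}(\widetilde m)}{\rho_{\widetilde\Sigma}(\widetilde m)}$ constant. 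The discrepancy between the $\inner{\rho}{\rho}$ constant produced by the $\Sigma$-computation and the one attached to $\widetilde\Sigma$, together with the potential terms generated by the partial conjugation, is collected into $f_\Sigma(m(\ell,\well))-f_{\widetilde\Sigma}(\widetilde m)$.

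Concretely, I would apply Lemma \ref{comp-formula1} to $\widetilde\Sigma$ with multiplicity $\widetilde m$ to rewrite $\delta_{\widetilde\Sigma}(\widetilde m)^{1/2}\circ(L_{\widetilde\Sigma}(\widetilde m)+\inner{\rho_{\widetilde\Sigma}(\widetilde m)}{\rho_{\widetilde\Sigma}(\widetilde m)})\circ\delta_{\widetilde\Sigma}(\widetilde m)^{-1/2}=L_{\mathfrak a}+f_{\widetilde\Sigma}(\widetilde m)$, and apply it to $\Sigma$ with $m(\ell,\well)$ to get $L_{\mathfrak a}+f_\Sigma(m(\ell,\well))$. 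Matching the two conjugations requires checking that the product of conjugating factors $\delta_\Sigma(m(\ell,\well))^{1/2}$ and $\delta_{\widetilde\Sigma}(\widetilde m)^{1/2}$ agree on the long/doubled roots; this is where the precise choice $\widetilde m_\rms=m_\rms+m_\rml$ (absorbing the long multiplicity $m_\rml=m_\rml(\ell,\well)+2\ell$) is forced. Subtracting the two identities eliminates the common $L_{\mathfrak a}$ and yields the stated $f_\Sigma(m(\ell,\well))-f_{\widetilde\Sigma}(\widetilde m)$ correction, with the constant $\inner{\rho}{\rho}$ terms landing as claimed.

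The main obstacle I anticipate is bookkeeping the exact exponential normalizations: $u,v$ are defined through $\cosh$ while $\delta_\Sigma^{1/2}$ uses $e^\alpha-e^{-\alpha}=2e^{-\alpha}\sinh\alpha$, so each conjugating factor carries a spurious $e^{c\cdot(\text{linear})}$ prefactor whose conjugation shifts $L_{\mathfrak a}$ by a first-order term and a constant. These shifts are precisely what make the $\inner{\rho}{\rho}$ constants come out correctly, and tracking their cancellation — together with verifying that the rescaling $\Sigma\mapsto 2\Sigma$ sends $\inner{\alpha}{\alpha}$ to $4\inner{\alpha}{\alpha}$ consistently in both $f_\Sigma$ and $f_{\widetilde\Sigma}$ — is the delicate computational heart of the argument. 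I expect the verification to reduce, after these normalizations are pinned down, to a direct term-by-term comparison that is routine but must be carried out with care over the three root-length strata.
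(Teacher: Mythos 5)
Your skeleton is the same as the paper's: rewrite the left-hand side as the conjugate of $D_{p_L}(m(\ell,\well))=L(m(\ell,\well))+\inner{\rho(m(\ell,\well))}{\rho(m(\ell,\well))}$ by $u^{\frac{m_\rms}{2}+\ell}v^{\frac{m_\rmm}{2}+\well}$, apply Lemma \ref{comp-formula1} once to $(\Sigma,m(\ell,\well))$ and once to $(\widetilde{\Sigma},\widetilde{m})$, and match the conjugating factors. The gap is in the matching step, where your key claim is false: $u$ and $v$ are \emph{not} the short- and middle-root pieces of $\delta_\Sigma(m(\ell,\well))^{\frac12}$ ``up to exponential normalization''. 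The short-root piece of $\delta_\Sigma^{\frac12}$ is built from $e^{\beta_j/2}-e^{-\beta_j/2}=2\sinh(\beta_j/2)$, whereas $u$ is built from $\cosh(\beta_j/2)$, and their ratio is $\coth(\beta_j/2)$ --- not an exponential. This matters exactly at the level of first-order terms: the conjugation $u^{-a}\circ L(m(\ell,\well))\circ u^{a}$ with $a=\frac{m_\rms}{2}+\ell$ adds $a\tanh(\beta_j/2)\,\partial_{x_{\beta_j}}$, and it is the duplication identity $\coth(\beta_j/2)+\tanh(\beta_j/2)=2\coth\beta_j$ that converts the coefficient $\frac{m_\rms+2\ell}{2}\coth(\beta_j/2)+(m_\rml-2\ell)\coth\beta_j$ into $(m_\rms+m_\rml)\coth\beta_j$, i.e.\ into the first-order part of $L_{\widetilde{\Sigma}}(\widetilde{m})$. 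Conjugating instead by the sinh-piece of $\delta_\Sigma^{\frac12}$ shifts by $a\coth(\beta_j/2)\,\partial_{x_{\beta_j}}$, leaving a $\coth(\beta_j/2)$ term that is not attached to any root of $\widetilde{\Sigma}=2\Sigma$ (in the opposite direction it removes that term but leaves the wrong, $\ell$-dependent coefficient $(m_\rml-2\ell)\coth\beta_j$); and an exponential prefactor $e^{c\,\lambda(x)}$ shifts first-order parts only by \emph{constant}-coefficient vector fields, so no such correction can convert $\coth(\beta_j/2)$ into $\tanh(\beta_j/2)$. Thus the ``partial conjugation leaving the long-root portion untouched, plus spurious exponential prefactors'' mechanism fails precisely at what you call the delicate computational heart.

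The missing idea --- and what the paper actually verifies --- is a single global identity for the \emph{ratio} of the two $\delta^{\frac12}$'s, in which all root lengths of $\Sigma$ and of $\widetilde{\Sigma}$ participate: by $\sinh\beta_j=2\sinh(\beta_j/2)\cosh(\beta_j/2)$ and its analogue for the middle roots,
\begin{equation*}
\delta_\Sigma(m(\ell,\well))^{\frac12}\,\delta_{\widetilde{\Sigma}}(\widetilde{m})^{-\frac12}
=2^{M}\,u^{-(\frac{m_\rms}{2}+\ell)}\,v^{-(\frac{m_\rmm}{2}+\well)}
\end{equation*}
for a constant $2^{M}$; it is here, and not in any root-by-root agreement, that the choice $\widetilde{m}=(m_\rms+m_\rml,\,m_\rmm+2\well,\,0)$ is forced (the long-root factor of $\delta_\Sigma^{\frac12}$ is consumed in producing the cosh's). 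Since multiplicative constants cancel under conjugation, conjugation by $u^{\frac{m_\rms}{2}+\ell}v^{\frac{m_\rmm}{2}+\well}$ is the composition of the two conjugations furnished by Lemma \ref{comp-formula1}, and the lemma follows because $f_\Sigma(m(\ell,\well))$ and $f_{\widetilde{\Sigma}}(\widetilde{m})$ are multiplication operators and pass through unchanged. If you replace your premise by this identity, your outline becomes the paper's proof; as written, the proposal does not close.
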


\begin{proof}
We start with
\begin{equation}\label{comp-formula3}
\delta_{\Sigma}(m(\ell, \widetilde{\ell}))^{\frac{1}{2}} \circ \left ( L_\Sigma (m(\ell, \widetilde{\ell})) + \langle\rho (\Sigma, m(\ell, \widetilde{\ell})),
\rho (\Sigma, m(\ell, \widetilde{\ell}))\rangle \right ) \circ \delta_{\Sigma}(m(\ell, \widetilde{\ell}))^{-\frac{1}{2}}
\end{equation}
The above, by Lemma \ref{comp-formula1} equals 
$$ L_{\mathfrak{a}} + \sum_{\alpha > 0 }~\frac{m_\alpha(\ell, \widetilde{\ell}) \left (2-m_\alpha(\ell, \widetilde{\ell})
 - 2m_{2\alpha}(\ell, \widetilde{\ell})\right ) \langle\alpha, \alpha\rangle}{(e^\alpha - e^{-\alpha})^2}  = L_{\mathfrak{a}} + f_{\Sigma}(m(\ell, \widetilde{\ell})).$$ 
 Replacing $L_{\mathfrak{a}}$ with 
 $$\delta_{\widetilde{\Sigma}}(\widetilde{m})^{\frac{1}{2}} \circ \left ( L_{\widetilde{\Sigma}} (\widetilde{m}) + \langle\rho_{\widetilde{\Sigma}}(\widetilde{m}), \rho_{\widetilde{\Sigma}}(\widetilde{m})\rangle \right ) \circ \delta_{\widetilde{\Sigma}} (\widetilde{m})^{-\frac{1}{2}} -f_{\widetilde{\Sigma}}(\widetilde{m}) $$ 
 we get the result as 
 $$\delta_\Sigma(m(\ell, \well))^{\frac{1}{2}} \delta_{\widetilde{\Sigma}}(\widetilde{m})^{-\frac{1}{2}} = 2^M u^{-(\frac{m_s}{2} + \ell)} v^{-(\frac{m_m}{2} + \widetilde{\ell})}\,,$$
 where $M$ is a constant depending on $m$ and $r$.
 \end{proof}
 
\begin{Cor}
\label{cor:symmetry}
In the above notation, for all $l\in \R$ and $\well\geq -\frac{m_\rmm}{2}$,
\begin{equation}
\label{symmetry}
D_{\ell, \well, p_L}(m) = D_{-\ell + m_\rml-1, \well, p_L}\,.
\end{equation}
\end{Cor}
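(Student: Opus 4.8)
The plan is to peel off all the $\ell$-independent structure using the closed form of Lemma~\ref{comp-formula2}, thereby reducing \eqref{symmetry} to a scalar symmetry. First I would apply Lemma~\ref{comp-formula2} to both $D_{\ell,\well,p_L}(m)$ and $D_{-\ell+m_\rml-1,\well,p_L}(m)$, which share the same base multiplicity $m$. The conjugating factors $u^{m_\rms/2}$ and $v^{m_\rmm/2}$ depend only on $m$, not on $\ell$, and the auxiliary data $\widetilde{\Sigma}=2\Sigma$ and $\widetilde{m}=(m_\rms+m_\rml,m_\rmm+2\well,0)$ are likewise independent of $\ell$, since the deformation \eqref{eq:mult-l-tl-gen} preserves $m_\rms+m_\rml$. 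Hence on the right-hand side of Lemma~\ref{comp-formula2} every term except $f_\Sigma(m(\ell,\well))$ is independent of $\ell$. Because conjugation by the invertible operator $u^{m_\rms/2}v^{m_\rmm/2}$ is injective, the corollary is equivalent to the equality of functions
\[
f_\Sigma(m(\ell,\well))=f_\Sigma(m(-\ell+m_\rml-1,\well))
\]
on $\mathfrak{a}_{\rm reg}$.

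Next I would evaluate $f_\Sigma(m(\ell,\well))$ from \eqref{fSigma} using the $BC_r$ root data. The middle roots $\alpha\in\Sigma_\rmm^+$ carry multiplicity $m_\rmm+2\well$ and satisfy $2\alpha\notin\Sigma$, so their contribution carries no $\ell$ and can be discarded. For each index $j$ I would pair the short root $\beta_j/2$, for which $2\alpha=\beta_j\in\Sigma_\rml^+$ and hence $m_{2\alpha}=m_\rml-2\ell$, with the long root $\beta_j$, for which $2\alpha\notin\Sigma$ and $m_{2\alpha}=0$. After writing $e^{\beta_j}-e^{-\beta_j}=(e^{\beta_j/2}-e^{-\beta_j/2})(e^{\beta_j/2}+e^{-\beta_j/2})$, the $j$-th contribution separates into an $\ell$-independent function of $x$ multiplied by the two polynomials
\[
P(\ell)=m_\rms(\ell)\big(2-m_\rms(\ell)-2m_\rml(\ell)\big),\qquad
Q(\ell)=m_\rml(\ell)\big(2-m_\rml(\ell)\big),
\]
where $m_\rms(\ell)=m_\rms+2\ell$ and $m_\rml(\ell)=m_\rml-2\ell$ (the first polynomial multiplying $\sinh^{-2}(\beta_j(x)/2)$ and the second $\sinh^{-2}(\beta_j(x)/2)\cosh^{-2}(\beta_j(x)/2)$, up to fixed constants). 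Thus it remains only to check that $P$ and $Q$ are each invariant under $\ell\mapsto-\ell+m_\rml-1$.

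This last step is a short algebraic verification. Putting $u=m_\rml-2\ell$, the substitution sends $u\mapsto 2-u$, and $Q=u(2-u)$ is manifestly symmetric under $u\leftrightarrow 2-u$. Putting $w=m_\rms+2\ell$ and $K=m_\rms+m_\rml-1$, one rewrites $P=w(w-2K)$ and finds that the substitution sends $w\mapsto 2K-w$, under which $P$ is again invariant. Consequently $f_\Sigma(m(\ell,\well))$ is unchanged and \eqref{symmetry} follows. I expect no genuine obstacle here; the only place demanding care is the bookkeeping of the factor-of-two conventions---namely which roots double (short roots do, long and middle roots do not) and the correct value of $m_{2\alpha}$ in each case---since an error there would spoil the precise form of $P$ and $Q$ on which the symmetry hinges.
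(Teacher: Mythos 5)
Your proof is correct and takes essentially the same route as the paper: both use Lemma \ref{comp-formula2} to reduce \eqref{symmetry} to the invariance of $f_\Sigma(m(\ell,\well))$ under $\ell\mapsto -\ell+m_\rml-1$, and then verify that invariance by direct computation on the $BC_r$ root data. The paper merely organizes the final algebra differently, writing $f_\Sigma(m(\ell,\well))=f_\Sigma(m(\well))+\tfrac{p^2}{4}\,\ell(\ell+1-m_\rml)\sum_{j=1}^r \cosh^{-2}\!\big(\tfrac{\beta_j}{2}\big)$ so that all $\ell$-dependence sits in the manifestly symmetric factor $\ell(\ell+1-m_\rml)$, whereas you check the symmetry of the two polynomial coefficients $P(\ell)$ and $Q(\ell)$ separately.
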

\begin{proof}
A simple computation shows that 
$$
f_{\Sigma}(m(\ell, \well)) =  f_{\Sigma}(m(\well)) + \frac{p^2}{4} \ell (\ell + 1 -m_\rml)\sum_{j=1}^r \frac{1}{\cosh^2 \frac{\beta_j}{2}}\,.
$$ 
The stated equality follows then from Lemma \ref{comp-formula2}. 
\end{proof}

\begin{Prop}
\label{prop:Dellwell}
In the above notation, $\D_{\ell, \well}(m)=u^{-\ell}v^{-\well}\circ \D(m(\ell, \well)) \circ u^{\ell}v^{\well}$ is a commutative subalgebra of 
$\D_{\mathcal{R}_0}^W$ of rank $r$. It is the centralizer of $D_{\ell, \well, p_L}$
in $\D_{\mathcal{R}_0}^W$.
As a consequence, $\D_{\ell, \well}(m)=\D_{-\ell + m_\rml-1, \well}(m)$.
In particular, $\D_{\ell, \well}(m)=\D_{-\ell, \well}(m)$ if $m_\rml=1$.

If $\well=0$, then $\mathcal{R}_0$ can be replaced by $\mathcal{R}$.
Furthermore, if $m=(m_\rms,m_\rmm,m_\rml=1)$ is geometric and $\ell\in \Z,$ $\well = 0,$ then $\D_{\ell, 0}(m)=\delta_{\ell}(\D(G/K; \tau_{\ell}))$.
\end{Prop}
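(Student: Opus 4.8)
The plan is to deduce every assertion from the already-known structure of $\D(m(\ell,\well))$ by transporting it through conjugation by $u^\ell v^\well$. Write $m'=m(\ell,\well)$. First I would record the identity $\D_{\ell,\well}(m)=u^{-\ell}v^{-\well}\circ\D(m')\circ u^\ell v^\well$: this is immediate from the definition $D_{\ell,\well,p}(m)=\beta\big(T_{\ell,\well,p}(m)\big)$ together with the formula $\beta\big(T_{\ell,\well,p}(m)\big)=u^{-\ell}v^{-\well}\circ D_p(m')\circ u^\ell v^\well$ recorded just before the proposition, letting $p$ run over $\polya^W$. Next I would check that $C\colon\phi\mapsto u^\ell v^\well\circ\phi\circ u^{-\ell}v^{-\well}$ is an algebra automorphism of $\D_{\mathcal{R}_0}^W$: multiplication operators with coefficients in $\mathcal{R}_0$ are left unchanged, while $C(\partial_\xi)=\partial_\xi-\ell\,u^{-1}\partial_\xi(u)-\well\,v^{-1}\partial_\xi(v)$, whose correction terms lie in $\mathcal{R}_0$ by \eqref{eq:der-u}--\eqref{eq:der-v} (this is exactly why $\mathcal{R}_0$ is chosen, as in the paragraph placing $T_{\ell,\well,\xi}(m)$ in $\D_{\mathcal{R}_0}\otimes\C[W]$); and $W$-invariance is preserved because $u$ and $v$ are $W$-invariant. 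Since $\D(m')$ is a commutative subalgebra of $\D_\mathcal{R}^W\subseteq\D_{\mathcal{R}_0}^W$ which, through the Harish-Chandra isomorphism $\gamma(m')$ onto $\polya^W$ and Chevalley's theorem, is generated by $r$ algebraically independent elements, its image $\D_{\ell,\well}(m)=C^{-1}\big(\D(m')\big)$ is again a commutative subalgebra of $\D_{\mathcal{R}_0}^W$ of rank $r$.

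For the centralizer assertion, one inclusion is free: $\D_{\ell,\well}(m)$ is commutative and contains $D_{\ell,\well,p_L}(m)$, so it is contained in the centralizer $Z$ of $D_{\ell,\well,p_L}(m)$ in $\D_{\mathcal{R}_0}^W$. For the reverse inclusion $Z\subseteq\D_{\ell,\well}(m)$ I would apply $C$ and use $C\big(D_{\ell,\well,p_L}(m)\big)=D_{p_L}(m')=L(m')+\inner{\rho(m')}{\rho(m')}$, which reduces the claim to the single statement that the centralizer of $L(m')$ in $\D_{\mathcal{R}_0}^W$ equals $\D(m')$. This is precisely the extension of Lemma \ref{lemma:commutatorL} from the coefficient ring $\mathcal{R}$ to the larger ring $\mathcal{R}_0$, and I expect it to be the main obstacle: a priori the centralizer computed in the larger algebra could be strictly bigger than the one computed in $\D_\mathcal{R}^W$.

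To overcome this I would re-run the proof of Lemma \ref{lemma:commutatorL} (i.e. of \cite[Theorem 1.3.12]{HS94}) in $\D_{\mathcal{R}_0}^W$. The mechanism there is that the Harish-Chandra homomorphism is injective on the centralizer: a $W$-invariant operator commuting with $L(m')$ is determined by its constant term $p_0(\partial)$ (the leading term of its coefficients as $x\to\infty$ in $\fa^+$) through the recursion relations forced by $[\,\cdot\,,L(m')]=0$, and $W$-invariance forces $p_0\in\polya^W$, whence the operator coincides with $D_{p_0}(m')\in\D(m')$. The only new point in the larger algebra is that the coefficients now expand over the finer exponent semigroup generated by the roots of $\Sigma^+$ rather than their doubles; the delicate verification is therefore that this homogeneous recursion admits no nonzero solution supported on the extra exponents, equivalently that the relevant indicial factors (of $\inner{\mu}{\mu-2\rho(m')}$-type) do not vanish for such $\mu\neq0$, so that $p_0$ still determines the whole operator. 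Equivalently, one may invoke Harish-Chandra's computation of this commutant in \cite[pp.~63--64]{HC60}, which is carried out over exactly the ring $\mathcal{R}_0$, together with its extension to arbitrary multiplicities as in \cite[Remark 1.3.14]{HS94}. Either way one obtains $Z=\D(m')$, hence $Z=\D_{\ell,\well}(m)$.

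The remaining assertions follow formally. By Corollary \ref{cor:symmetry} the operators $D_{\ell,\well,p_L}(m)$ and $D_{-\ell+m_\rml-1,\well,p_L}(m)$ coincide; since, by the centralizer characterization just proved, each of $\D_{\ell,\well}(m)$ and $\D_{-\ell+m_\rml-1,\well}(m)$ is the centralizer of this single operator in $\D_{\mathcal{R}_0}^W$, the two algebras are equal, and the case $m_\rml=1$ gives $\D_{\ell,\well}(m)=\D_{-\ell,\well}(m)$. When $\well=0$ the factor $v^{-\well}$ disappears and, as recorded before the proposition, $T_{\ell,\xi}(m)\in\D_\mathcal{R}\otimes\C[W]$; hence every step above takes place inside $\D_\mathcal{R}^W$ and uses Lemma \ref{lemma:commutatorL} directly, so $\mathcal{R}_0$ may be replaced by $\mathcal{R}$ and no extension is needed in this case. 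Finally, for a geometric $m=(m_\rms,m_\rmm,1)$ with $\ell\in\Z$ and $\well=0$, integrality of $\ell$ guarantees that $\tau_\ell$ is a genuine $K$-type, and the identity $\D_{\ell,0}(m)=\delta_\ell\big(\D(G/K;\tau_\ell)\big)$ is the operator-algebra counterpart of the geometric realization of the $(\ell,0)$-hypergeometric functions as $\tau_\ell$-spherical functions developed in Section \ref{section:geometric} (following \cite{OS17}); it is obtained by matching, for each invariant differential operator, its $\delta_\ell$-conjugated radial part on $G/K$ with the corresponding element $D_{\ell,0,p}(m)$.
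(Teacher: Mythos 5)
Your proposal is correct and follows essentially the same path as the paper's proof: conjugation by $u^{\ell}v^{\well}$ preserving $\D_{\mathcal{R}_0}^W$ (and $\D_{\mathcal{R}}^W$ when $\well=0$), the centralizer characterization of $\D(m(\ell,\well))$, Corollary \ref{cor:symmetry} for the symmetry in $\ell$, and the $\tau_\ell$-radial-component identification in the geometric case. The single point of divergence is the crux you rightly isolate: that the centralizer of $D_{p_L}(m(\ell,\well))$ in the larger algebra $\D_{\mathcal{R}_0}^W$ is no bigger than the one in $\D_{\mathcal{R}}^W$. The paper settles this by citing \cite[Lemma 5.2]{OS17}, which asserts exactly this equality of centralizers; your plan of re-running the proof of \cite[Theorem 1.3.12]{HS94} over the finer exponent semigroup generated by $\Sigma^+$ is in substance what the proof of that lemma does, so your sketch, once the non-vanishing of the relevant indicial factors on the extra exponents is written out, yields the same conclusion. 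One caveat: the paper cites \cite[pp.~63--64]{HC60} only for the definition of the ring $\mathcal{R}_0$, not for a commutant computation over it, so you should not lean on that reference as a ready-made substitute; \cite[Lemma 5.2]{OS17} is the citation that actually closes this step.
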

\begin{proof}
Because of \eqref{eq:der-u}, \eqref{eq:der-v} and since $u$ and $v$ are $W$-invariant, one can easily check that the conjugation by $u^{\ell} v^{\well}$ maps 
$\D_{\mathcal{R}_0}^W$ into itself 
and $\D_{\mathcal{R}}^W$ into itself if $\well=0$.
Now the proof follows from the more general result in \cite[Lemma 5.2]{OS17}, 
showing that the centralizers of 
 $D_{p_L}(m(\ell,\well))$ in $\D_{\mathcal{R}_0}^W$ and $\D_{\mathcal{R}}^W$ agree, together with
\eqref{symmetry}. Here, $\delta_{\ell}(\D(G/K, \tau_{\ell})$ denotes the $\tau_{\ell}$-radial components of the commutative algebra $\D(G/K; \tau_{\ell})$ (see subsection \ref{subsection:tauell-spherical}).
\end{proof}

\subsection{$\tauwell$-Hypergeometric functions}
Let $\l\in \fa^*_\C$ be fixed. The \textit{$\tauwell$-Heckman-Opdam
hypergeometric function with spectral parameter $\l$} is the unique
$W$-invariant analytic solution $F_{\ell, \well,\l}(m)$ of the system of
differential equations 
\begin{equation}
  \label{eq:hypereq3}
D_{\ell, \well,p}(m) f=p(\l)f \qquad (p \in \polya^W),
\end{equation}
which satisfies $f(0)=1$.

The \textit{non-symmetric $\tauwell$-hypergeometric function with
spectral parameter $\l$} is the unique analytic solution
$G_{\ell,\well, \l}(m)$ of the system of differential equations 
\begin{equation}
  \label{eq:hypereq4}
T_{\ell,\well,\xi}(m) g=\l(\xi) g \qquad (\xi \in \fa),
\end{equation}
which satisfies $g(0)=1$.

The symmetric and non-symmetric $\tauwell$-Heckman-Opdam
hypergeometric functions are therefore (suitably normalized) joint
eigenfunctions of the commutative algebras $\D_{\ell, \well}(m)$ and
$\{T_{\ell, \well,p}:p\in S(\fa_\C)\}$, respectively. Notice that the
equality for $\D_{\ell, \well}(m)$ in Proposition \ref{prop:Dellwell} yields
\begin{equation}
\label{eq:Fellm-even-ell}
F_{\ell,\well, \l}(m)=F_{-\ell+m_\rml-1,\well,\l}(m)\qquad (\l\in\fa_\C^*)\,.
\end{equation}
On the other hand, an analogous symmetry relation is generally not true for $G_{\ell, \well,\l}(m)$ as can be seen from rank one examples. We omit the details.

By definition, $F_{\ell,\well,\l}(m;x)$ is $W$-invariant in $x\in \fa$ and in $\l\in \fa^*_\C$.
Furthermore, since $u^\ell v^{\well} \circ D_{\ell, \well,p}(m)\circ u^{-\ell}v^{-\well}=D_{p}(m(\ell, \well))$ and
$u^\ell v^{\well}\circ T_{\ell,\well,\xi}(m)\circ u^{-\ell}v^{-\well}=T_{\xi}(m(\ell, \well))$, one obtains for all $\l\in \fa_\C^*$:
\begin{eqnarray}
\label{eq:Fell-F}
F_{\ell,\well,\l}(m)&=&u^{-\ell} v^{-\well}F_{\l}(m(\ell, \well))\,,\\
G_{\ell,\well,\l}(m)&=&u^{-\ell}v^{-\well} G_{\l}(m(\ell, \well))\,.
\label{eq:Gell-G}
\end{eqnarray}
As in the case $\ell=\well=0$,
\begin{equation}
\label{eq:Fell-Gell}
F_{\ell,\well,\l}(m;x)=\frac{1}{|W|}\, \sum_{w\in W} G_{\ell,\well,\l}(m;w^{-1}x) \qquad (x \in \fa)\,.
\end{equation}
As a consequnce of \eqref{eq:Fell-F} and of the corresponding properties of the Heckman-Opdam hypergeometric functions (see e.g. \cite[Theorem 4.4.2]{HS94}, \cite[Theorem 3.15]{OpdamActa}), the
$F_{\ell,\well, \l}(m;x)$ and $G_{\ell,\well,\l}(m;x)$ are entire functions of $\lambda \in \fa_\C^*$, analytic functions in $x\in \fa$
and meromorphic functions of $m=(m_\rms,m_\rmm,m_\rml) \in \C^3$.  Let $\mathcal M_{\C,0}=\{m=(m_\rms,m_\rmm,m_\rml)\in \C^3:\Re(m_\rms+m_\rml)\geq 0 \}$.
Observe that $m\in \mathcal M_{\C,0}$ if and only if $m(\ell)\in  \mathcal M_{\C,0}$.  One can show (see Appendix A below) that for fixed $(\l,x)\in \fa^*_\C \times \fa$, the functions $F_\l(m;x)$ and $G_\l(m;x)$ are holomorphic in a neighborhood of $\mathcal M_{\C,0}$. It follows that $F_{\ell,\well,\l}(m;x)$ and $G_{\ell,\well, \l}(m;x)$ are holomorphic near each $m=(m_\rms,m_\rmm,m_\rml)\in \mathcal M_0$.

\subsection{Estimates and asymptotics for the $\tauwell$-hypergeometric functions}
\label{subsection:taul}
Let $m=(m_\rms,m_\rmm,m_\rml)\in \mathcal{M}_+$.
Using \eqref{eq:Fell-F} and \eqref{eq:Gell-G}, we can obtain from the results of section \ref{section:basic} some estimates and asymptotic properties for the $\tauwell$-hypergeometric functions. The factors $u^{-\ell}$ and $v^{-\well}$ require some additional care for the asymptotics.

By Lemma \ref{lemma:M+M3}, we have $m(\ell, \well)\in
\mathcal{M}_+\cup \mathcal{M}_3$ for $m\in \mathcal{M}_+$ and
$\ell\in \big[\ell_{\rm min}(m),\ell_{\rm max}(m)\big], \well \geq -\frac{m_\rmm}{2}$.
This implies that Proposition \ref {prop:positivity-estimates-M3}, Theorem \ref{thm:hc-expansion} and
Corollary \ref{cor:leading-termF-realcase} hold true for
$F_\l(m(\ell, \well))$ for all $m\in \mathcal{M}_+$ and $\ell\in \big]\ell_{\rm min}(m),\ell_{\rm max}(m)\big[$. In turn, \eqref{eq:Fell-F} yields analogous statements for the $\tauwell$-hypergeometric functions.
Recall that $\ell_{\rm min}=-\frac{m_\rms}{2}$ and $\ell_{\rm max}=\frac{m_\rms}{2}+m_\rml$. (We are omitting from the notation the dependence on $m$ of $\ell_{\rm min}$ and $\ell_{\rm max}$). So
$\ell_{\rm min} \leq 0 < \ell_{\rm max}$.

Recalling also that $F_{\ell,\well,\lambda}=F_{-\ell+\rml-1,\well,\lambda}$, we see that we can extend the inequalities for $F_{\ell,\well,\lambda}$ to $\ell \in 
]-\frac{m_\rms}{2}-1, \frac{m_\rms}{2}+m_\rml[$ (or to $\ell \in [-\frac{m_\rms}{2}-1, \frac{m_\rms}{2}+m_\rml]$ where the extension by continuity in the multiplicity parameter is possible; see Theorem \ref{thm:reg-prop} in the appendix). For a fixed multiplicity $m=(m_\rms,m_\rmm,m_\rml)$, the symmetries of $F_{\ell,\well,\lambda}$ in the $\ell$-parameter can be pictured as
symmetries of the deformations $m(\ell)$ around the point $m(\frac{m_\rml-1}{2})$, as in Figure \ref{fig:symmetries-ell}.

Similarly, the inequalities of
Lemma \ref{lemma:OpdamM2M3} extend to any $\ell\in \R$ because either the condition $\ell\leq \ell_{\max}$ or $-\ell+m_\rml-1 \leq \ell_{\max}$ is always satisfied. 
In the first case, $m(\ell,\well)\in \mathcal{M}_2\cup \mathcal{M}_3$; in the other, $m(-\ell+m_\rml+1,\well)\in \mathcal{M}_2\cup \mathcal{M}_3$.

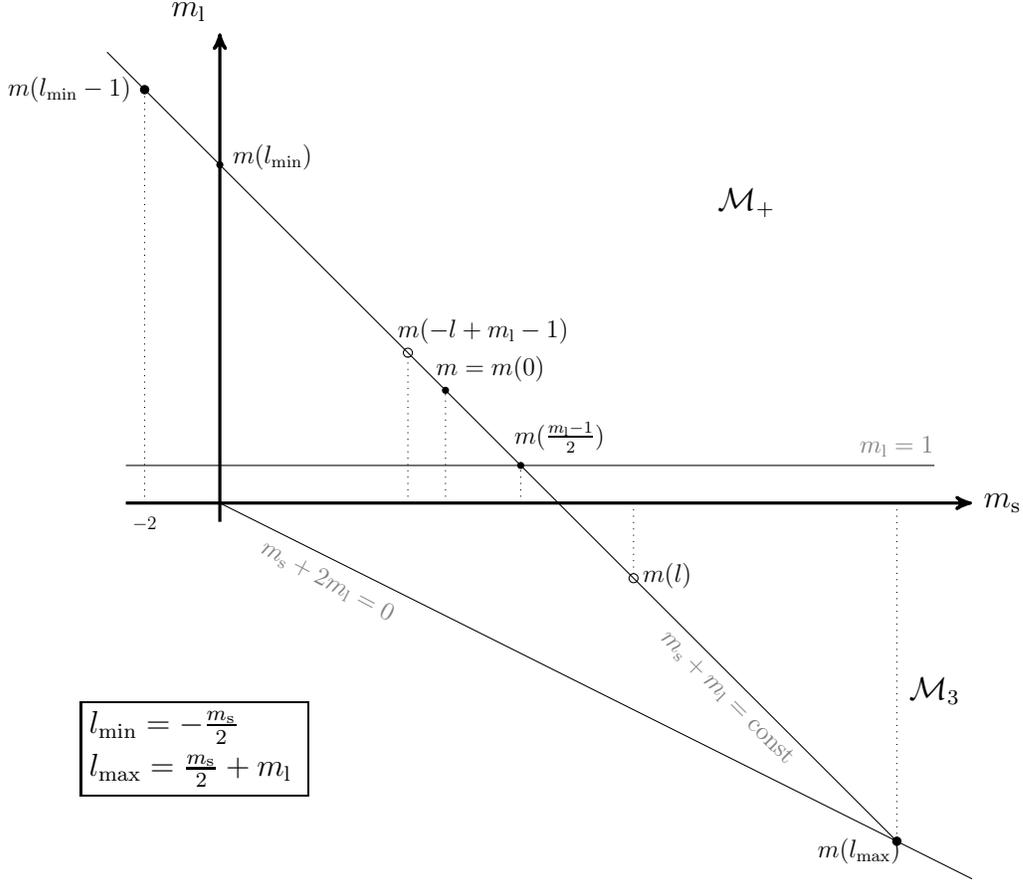
\begin{figure}[h]
\begin{tikzpicture}[
    scale=.5,
    axis/.style={very thick, ->, >=stealth'},
    equation line/.style={thick},
    equation line dashed/.style={thick, dashed},
    reference line/.style={thin},
    ]
\draw [dotted] (-2,0) -- (-2,11);
\draw [dotted] (5,0) -- (5,4);
\draw [dotted] (6,0) -- (6,3);
\draw [dotted] (8,0) -- (8,1);
\draw [dotted] (11,0) -- (11,-2);
\draw [dotted] (18,0) -- (18,-9);

\draw[fill=black] (18,-9) circle (1.1mm);
\draw (11,-2) circle (1.2mm);
\draw[fill=black] (8,1) circle (.8mm);
\draw[fill=black] (6,3) circle (.8mm);
\draw (5,4) circle (1.2mm);
\draw[fill=black] (0,9) circle (.8mm);
\draw[fill=black] (-2,11) circle (1.1mm);

    \draw[axis] (-2.5,0)  -- (20,0) node(xline)[right]
        {$m_{\mathrm{s}}$};
    \draw[axis] (0,-.5) -- (0,12.5) node(yline)[above]
        {$m_{\mathrm{l}}$\qquad\null};
     \draw[reference line] (20,-10) -- (0,0) 
        node[below right, text width=8em, rotate=-28, scale=.8, color=gray]
        {\null\qquad $m_{\mathrm{s}}+2m_{\mathrm{l}}=0$};
     \draw[reference line] (-2.5,1) -- (19,1)
        node[above, text width=10em, rotate=0, scale=.8, color=gray]
        {\null\qquad $m_{\mathrm{l}}=1$};
     \draw[reference line] (-3,12) -- (18,-9) 
   node[below, text width=25em, rotate=-45,scale=.8, color=gray]
        {$m_{\mathrm{s}}+m_{\mathrm{l}}={\rm const}$ };    
        

\node at (14,8) {$\mathcal{M}_+$};
\node at (19,-5) {$\mathcal{M}_3$};
\draw (17,-8.7) node[below, scale=.8] {$m(l_{\max})$};
\draw (11.9,-2.5) node[above, scale=.8] {$m(l)$};
\draw (7.6,2.4) node[below right, scale=.8] {$m(\frac{m_{\rml}-1}{2})$};
\draw (7.2,3) node[above, scale=.8] {$m=m(0)$};
\draw (7,4) node[above,scale=.8] {$m(-l+m_\rml-1)$};
\draw (1.4,9.2) node[scale=.8] {$m(l_{\min})$};
\draw (-4,11) node[scale=.8] {$m(l_{\min}-1)$};
\draw (-2,-.2) node[below,scale=.6] {$-2$};

\draw (-4,-5) node[below right]
{\fbox{\parbox{2.8cm}{$l_{\min}=-\frac{m_\rms}{2}$\\[.2em]
$l_{\max}=\frac{m_\rms}{2}+m_\rml$}}};
\end{tikzpicture}
\vskip -2cm

\caption{Symmetries of $m(\ell)$ 
around $m(\frac{m_\rml-1}{2})$ if $m_\rml\geq 1$}
\label{fig:symmetries-ell}
\end{figure}

We leave to the reader the simple task of modifying the statements of Theorem \ref{thm:hc-expansion} and
Corollary \ref{cor:leading-termF-realcase} in this case, and
we collect the estimates obtained for $G_{\ell,\well, \lambda}(m)$ and $F_{\ell, \well, \lambda}(m)$ in the corollary below.

\begin{Cor}
\label{cor:pos-est-ell}
Let $m=(m_\rms,m_\rmm,m_\rml) \in \mathcal{M}_+$ be a nonnegative multiplicity function
on a root system of type $BC_r$, and let $\ell, \well \in\R,$ $\well \geq -\frac{m_\rmm}{2}$. Then the following properties hold.
\begin{enumerate}
\thmlist
\item For all $\ell \leq\ell_{\rm max}(m)$,  $\l\in\fa_\C^*$ and $x\in \mathfrak{a}$,
\begin{equation}
\label{eq:b-e1-G}
|G_{\ell, \well, \l}(m;x)|\leq \sqrt{|W|} \, u^{-\ell}(x) v^{-\well}(x) e^{\max_w (w\l)(x)}
\end{equation}
and for all $\ell\in\R$, $\l\in\fa_\C^*$ and $x\in \mathfrak{a}$,
\begin{equation}
\label{eq:b-e1-F1}   
|F_{\ell, \well, \l}(m;x)|=|F_{-\ell+m_\rml-1,\l}(m;x)|\leq \sqrt{|W|} \, u^{-\ell}(x) v^{-\well}(x) e^{\max_w (w\l)(x)}\,.
\end{equation}

\item 
Suppose $\ell \in [\ell_{\rm min}-1,\ell_{\rm max}]$.
For all
$\l\in\fa^*$ the function $F_{\ell,\well, \l}(m)$ is real and strictly positive.
For all $\l\in\fa_\C^*$
\begin{equation}
\label{eq:b-e1-F2}
|F_{\ell, \well, \l}(m)|\leq F_{\ell,\well, \Re\l}(m) \,.\\
\end{equation}
Moreover, for all $\l\in\fa^*$ and all $x\in \fa$
\begin{equation}
\label{eq:b-e2-F}
F_{\ell,\well, \l}(m;x)\leq F_{\ell,\well, 0}(m;x)e^{\max_w (w\l)(x)}\,.
\end{equation}
More generally, for all $\l\in\fa^*$, $\mu\in \overline{(\fa^*)^+}$ and all $x\in \fa$
\begin{equation}
\label{eq:b-e3-F}
F_{\ell,\well, \l+\mu}(m;x)\leq F_{\ell,\well, \mu}(m;x)e^{\max_w (w\l)(x)} \,.
\end{equation}
The same properties hold for $G_{\ell, \well, \l}(m)$ provided $\ell \in [\ell_{\min}, \ell_{\max}].$

\item 
Suppose $\ell \in 
[\ell_{\min}-1,\ell_{\max}]$, 
$\well\geq 0$, $\l \in \overline{(\mathfrak{a}^*)^+}$ and $x_1 \in
\mathfrak{a}^+$. Further assume that $m_\rml \geq 1.$ Then for all  $x \in \mathfrak{a}$ we have
\begin{equation} \label{eq:subadd-Schapira-special-well}
F_{\ell,\well, \l}(m; x+x_1) e^{-(\l+\rho(m(2\well)))(x_1)} \leq
F_{\ell,\well, \l}(m; x) \leq F_{\ell,\well, \l}(m; x+x_1)
e^{(\l+\rho(m(2\well)))(x_1)}\,.
\end{equation}
\item 
Suppose $\ell \in ]\ell_{\min}, \ell_{\max}[$, $\well > 0$ if $m_\rmm =0,$ $\well \geq 0$ if $m_\rmm > 0$ and $\l \in \overline{(\mathfrak{a}^*)^+}$. Then for all $x \in
\overline{\mathfrak{a}^+}$ we have
\begin{eqnarray}
\label{asymp-ell-well}
F_{\ell,\well, \l}(m; x) &\asymp &\big[\prod_{\a\in\Sigma_{\l}^0}
(1+\a(x))\big] e^{(\l-\rho(m(2\well)))(x)}\,,
\end{eqnarray}
where $\Sigma_{\l}^0=\{ \a\in \Sigma_\rms^+\cup \Sigma_\rmm^+:\inner{\a}{\l}=0\}$.
The asymptotics \eqref{asymp-ell-well} extend to $\ell \in ]\ell_{\min}-1, \ell_{\max}[$
if $m_\rms+m_\rml\geq 1$.
\end{enumerate}
\end{Cor}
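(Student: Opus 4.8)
The plan is to reduce every assertion to the corresponding property of the Heckman--Opdam functions $F_\l(m(\ell,\well))$ and $G_\l(m(\ell,\well))$ through the identities \eqref{eq:Fell-F} and \eqref{eq:Gell-G}, exploiting that the factors $u^{-\ell}$ and $v^{-\well}$ are everywhere strictly positive (being products of $\cosh$'s). Lemma \ref{lemma:M+M3} locates $m(\ell,\well)$: it lies in $\mathcal{M}_2\cup\mathcal{M}_3$ precisely when $\ell\leq\ell_{\rm max}$, in $\mathcal{M}_+\cup\mathcal{M}_3$ when $\ell\in[\ell_{\rm min},\ell_{\rm max}]$, and in $\mathcal{M}_1$ when $\ell\in\,]\ell_{\rm min},\ell_{\rm max}[$ (the hypothesis on $\well$ forcing $m_\rmm+2\well>0$). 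For the statements about $F$, the symmetry \eqref{eq:Fellm-even-ell}, namely $F_{\ell,\well,\l}(m)=F_{-\ell+m_\rml-1,\well,\l}(m)$, lets me replace $\ell$ by its reflection $\ell'=-\ell+m_\rml-1$ about $(m_\rml-1)/2$; this reflection carries $[\ell_{\rm min},\ell_{\rm max}]$ onto $[\ell_{\rm min}-1,\ell_{\rm max}-1]$, so the union of the two ranges is $[\ell_{\rm min}-1,\ell_{\rm max}]$, which explains the $F$-ranges in (b)--(d). No such symmetry is available for $G$, which is exactly why its estimates in (a)--(b) stay on $\ell\leq\ell_{\rm max}$, respectively $\ell\in[\ell_{\rm min},\ell_{\rm max}]$.

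With this dictionary, parts (a) and (b) are essentially multiplicative. For (a) I apply Lemma \ref{lemma:OpdamM2M3} to bound $|G_\l(m(\ell,\well))|$ and $|F_\l(m(\ell,\well))|$ by $\sqrt{|W|}\,e^{\max_w\Re(w\l)}$ whenever the underlying multiplicity lies in $\mathcal{M}_2\cup\mathcal{M}_3$, then multiply by the positive factor $u^{-\ell}v^{-\well}$; since for every $\ell$ at least one of $\ell$ and $\ell'=-\ell+m_\rml-1$ is $\leq\ell_{\rm max}$, the $F$-bound follows for all $\ell$ after passing through \eqref{eq:Fellm-even-ell} to the admissible representative. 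For (b) I invoke Proposition \ref{prop:positivity} or Proposition \ref{prop:positivity-estimates-M3}, according as $m(\ell,\well)\in\mathcal{M}_+$ or $\mathcal{M}_3$, to obtain positivity and the ratio estimates \eqref{eq:basic-estimate1}--\eqref{eq:basic-estimate3}; the common factor $u^{-\ell}v^{-\well}$ cancels on the two sides of each of these inequalities, and positivity of $u,v$ transfers positivity from $F_\l(m(\ell,\well))$, $G_\l(m(\ell,\well))$ to $F_{\ell,\well,\l}(m)$, $G_{\ell,\well,\l}(m)$. Reflection then extends the $F$-range to $[\ell_{\rm min}-1,\ell_{\rm max}]$, while $G$ stays on $[\ell_{\rm min},\ell_{\rm max}]$.

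The substance lies in (c) and (d), where $u^{-\ell}$ and $v^{-\well}$ interact with the geometry rather than cancelling. The crux is the identity of covectors
\begin{equation*}
\frac{\ell}{2}\sum_{j=1}^r\beta_j+\well\sum_{j=1}^r(j-1)\beta_j+\rho(m(\ell,\well))=\rho(m(2\well)),
\end{equation*}
which I check root by root and in which the $\ell$-dependence cancels (consistent with the $\ell$-symmetry of $F_{\ell,\well,\l}$). For (d) the hypotheses give $m(\ell,\well)\in\mathcal{M}_1$, so the sharp asymptotics of Theorem \ref{thm:real} (valid throughout $\mathcal{M}_1$) yield $F_\l(m(\ell,\well);x)\asymp\big[\prod_{\a\in\Sigma_\l^0}(1+\a(x))\big]e^{(\l-\rho(m(\ell,\well)))(x)}$; multiplying by $u^{-\ell}(x)v^{-\well}(x)$ and using $\cosh t\asymp e^{|t|}$ together with $\beta_j(x),(\beta_j\pm\beta_i)(x)\geq0$ on $\overline{\fa^+}$ gives $u^{-\ell}(x)\asymp e^{-\frac{\ell}{2}\sum_j\beta_j(x)}$ and $v^{-\well}(x)\asymp e^{-\well\sum_j(j-1)\beta_j(x)}$. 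The displayed identity then collapses the exponent to $(\l-\rho(m(2\well)))(x)$, while the polynomial prefactor is unaffected since, the relation $\asymp$ being up to multiplicative constants, only the exponential rates recombine. The extension to $\ell\in\,]\ell_{\rm min}-1,\ell_{\rm max}[$ when $m_\rms+m_\rml\geq1$ comes from reflecting and noting that the two open ranges then overlap.

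For (c) I transfer Lemma \ref{lemma:subadd-Schapira} for $F_\l(m(\ell,\well))$ (available since $m(\ell,\well)\in\mathcal{M}_+\cup\mathcal{M}_3$) through \eqref{eq:Fell-F}; this produces the ratio $u^\ell(x+x_1)v^\well(x+x_1)\big/\bigl(u^\ell(x)v^\well(x)\bigr)$, which I control by \eqref{eq:der-u}, \eqref{eq:der-v} and $|\tanh|\leq1$, obtaining factors $e^{\pm(\frac{\ell}{2}\sum_j\beta_j+\well\sum_j(j-1)\beta_j)(x_1)}$. For $\ell\geq0$ and $\well\geq0$ these combine with $\rho(m(\ell,\well))$ via the same identity to produce exactly $\rho(m(2\well))$, giving \eqref{eq:subadd-Schapira-special-well} (in fact with the sharper exponent $\rho(m(2\well))\mp\l$, which implies the stated one because $\l(x_1)\geq0$). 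The hypothesis $m_\rml\geq1$ handles $\ell<0$: the reflected parameter $\ell'=-\ell+m_\rml-1$ is then $\geq0$ and still $\leq\ell_{\rm max}$, so the $\ell\geq0$ argument applies to $F_{\ell',\well,\l}=F_{\ell,\well,\l}$, the reflection also supplying the positivity of $G_{\ell',\well,\l}$ needed in the gradient identity. I expect the bookkeeping of these $u,v$-ratios --- in particular verifying that their growth rates conspire with $\rho(m(\ell,\well))$ to produce the $\ell$-free covector $\rho(m(2\well))$ --- to be the main obstacle, since here the factors no longer cancel as in (a)--(b), and the sign constraints $\well\geq0$ and $m_\rml\geq1$ are precisely what make the $\tanh$-estimates point in the right direction.
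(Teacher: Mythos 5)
Your proposal is correct and follows essentially the same route as the paper's own proof: reduction through \eqref{eq:Fell-F}--\eqref{eq:Gell-G}, localization of $m(\ell,\well)$ via Lemma \ref{lemma:M+M3}, the same inputs (Lemma \ref{lemma:OpdamM2M3}, Propositions \ref{prop:positivity} and \ref{prop:positivity-estimates-M3}, Lemma \ref{lemma:subadd-Schapira}, Theorem \ref{thm:real}), the exponent identity \eqref{rho2well}, and the symmetry \eqref{eq:Fellm-even-ell} (with $m_\rml\geq 1$, resp.\ $m_\rms+m_\rml\geq 1$) to enlarge the $\ell$-range for $F$. The one local difference is in your favor: in (c) you control the ratio $u^{\ell}(x+x_1)v^{\well}(x+x_1)/\bigl(u^{\ell}(x)v^{\well}(x)\bigr)$ by integrating \eqref{eq:der-u}--\eqref{eq:der-v} with $|\tanh|\leq 1$, which is valid for every $x\in\fa$ and keeps the sharper exponent $(\rho(m(2\well))\mp\l)(x_1)$ until the final weakening, whereas the paper's lower duplication inequality $u^{\ell}(x)u^{\ell}(x_1)\leq u^{\ell}(x+x_1)$ in \eqref{duplication-uv} is, as written, only valid when all $\beta_j(x)\geq 0$.
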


\begin{proof}
We prove (c) and (d). For (c), first assume that $\ell \in [\ell_{\min}, \ell_{\max}].$ since $F_{\ell, \well, \lambda} = F_{-\ell+m_\rml-1, \well, \lambda}$ we may assume that $\ell \geq \frac{m_\rml-1}{2} \geq 0.$
To keep track of the exponents appearing in our formulas, it will be convenient to introduce the following notation, 
\begin{equation}\label{eq:sum-middle}
\sum_{1 \leq i < j \leq r} (\beta_j \pm \beta_i) = \sum_{1 \leq i < j \leq r} (\beta_j + \beta_i) + \sum_{1 \leq i < j \leq r} (\beta_j - \beta_i) = \sum_{j =1}^r 2(j-1) \beta_j.
\end{equation}
For $a\geq 0$ and $b\geq 0$,
$$
\cosh a \cosh b\leq \cosh(a+b)\leq \cosh a\;  e^b\,.
$$
Hence, if $\ell\geq 0$, $\well\geq 0$ and $x,x_1$ are as above,
\begin{align}
\label{duplication-uv}
\begin{split}
u^{\ell}(x)u^{\ell}(x_1)  &\leq  u^{\ell}(x+x_1)
\leq  u^{\ell}(x)e^{\frac{\ell}{2}\sum_{j=1}^r \beta_j(x_1)} \,,\\
v^{\well}(x)v^{\well}(x_1) &\leq  v^{\well}(x+x_1)
\leq   v^{\well}(x)e^{\frac{\well}{2} \sum_{1 \leq i < j \leq r} (\beta_j \pm \beta_i)(x_1)}  \,.
\end{split}
\end{align}

Notice that
\begin{equation}
\label{rho2well}
\rho(m(2\well))=\rho(m(\ell,\well))+\frac{\ell}{2}\sum_{j=1}^r \beta_j + \frac{\well}{2} \sum_{1 \leq i < j \leq r}(\beta_j \pm \beta_i)\,.
\end{equation}
By \eqref{eq:subadd-Schapira-special}, which applies since $m(\ell,\well)\in \mathcal{M}_3$,
$$
F_\l(m(\ell,\well); x+x_1) e^{-(\l+\rho(m(\ell,\well)))(x_1)}\leq
F_\l(m(\ell,\well); x) \leq
F_\l(m(\ell,\well); x+x_1) e^{(\l+\rho(m(\ell,\well)))(x_1)}\,.
$$
Multiplying each term of the inequality by $u(x)^{-\ell}v(x)^{-\well}$ and using the definition of $F_{\ell,\well, \lambda}$, we obtain
\begin{multline*}
\frac{u^{\ell}(x+x_1) v^{\well}(x+x_1)}{u^{\ell}(x) v^{\well}(x)}
F_{\ell,\well, \lambda}(m; x+x_1) e^{-(\l+\rho(m(\ell,\well)))(x_1)}
\\
\leq F_{\ell,\well, \lambda}(m; x) \leq
\frac{u^{\ell}(x+x_1) v^{\well}(x+x_1)}{u^{\ell}(x) v^{\well}(x)}
F_{\ell,\well, \lambda}(m; x+x_1) e^{(\l+\rho(m(\ell,\well)))(x_1)}\,.
\end{multline*}
The right hand side inequality in (c) follows from the second inequalities in \eqref{duplication-uv} and
\eqref{rho2well}. Again, using \eqref{rho2well} (and \eqref{eq:sum-middle}), the expression on the left hand side inequality of (d) becomes 
$$ \frac{u^\ell (x+x_1) v^{\well}(x+x_1)}{u^\ell(x_1) v^{\well}(x_1)} e^{\frac{\ell}{2} \sum_{j=1}^r \beta_j(x_1)} e^{\frac{\well}{2} \sum_{1 \leq i < j \leq r} (\beta_j \pm \beta_i)(x_1)} F_{\ell, \well, \lambda}(m; x+x_1) e^{-(\lambda + \rho(m(2\well)))(x_1)}  .$$ This implies (c) as
$$\frac{u^\ell (x+x_1) v^{\well}(x+x_1)}{u^\ell(x_1) v^{\well}(x_1)} e^{\frac{\ell}{2} \sum_{j=1}^r \beta_j(x_1)} e^{\frac{\well}{2} \sum_{1 \leq i < j \leq r} (\beta_j \pm \beta_i)(x_1)} \geq 1.$$ For $\ell \in [\ell_{\min}-1, \ell_{\min}[,$ notice that $\ell_{\min} \leq -\ell+m_\rml-1 \leq \ell_{\max}$ as $m_\rml \geq 1$ and use the equality $F_{\ell, \well, \lambda} = F_{-\ell+m_\rml-1, \well, \lambda}.$
Next we prove (d). For $a\geq 0$ we have $\cosh a \asymp e^a$. Hence
\begin{align}
\label{trivial estimate}
\begin{split}
u^{-\ell}(x) &\asymp e^{-\frac{\ell}{2}\sum_{j=1}^r \beta_j(x)} \\
v^{-\well}(x) &\asymp e^{\frac{-\well}{2} \sum_{1 \leq i < j \leq r} (\beta_j \pm \beta_i)(x)}\,.
\end{split}
\end{align}
Since $m(\ell,\well)\in \mathcal{M}_3^0$, we have by Theorem \ref{thm:real},
$$
F_{\lambda}(m(\ell,\well);x)\asymp \big[\prod_{\a\in\Sigma_{\l}^0}
(1+\a(x))\big] e^{(\l-\rho(m(\ell,\well)))(x)}\,.
$$
Then \eqref{asymp-ell-well} follows by multiplying both sides of this asymptotics by
$u^{-\ell}(x)v^{-\well}(x)$, together with \eqref{trivial estimate} and \eqref{rho2well}.

If $\ell\in ]\ell_{\min}-1,\ell_{\min}[$, then $-\ell+m_\rml-1\in ]-\ell_{\min}+m_\rml-1,
\ell_{\max}[$. The lower bound satisfies $-\ell_{\min}+m_\rml -1 \geq \ell_{\min}$
if and only if $m_\rms+m_\rml\geq 1$. In this case, the above asymptotics hold for
$-l+m_\rml-1$ as well. They lead to \eqref{asymp-ell-well}
using $F_{\ell,\well,\lambda}(m)=F_{-\ell+m_\rml-1,\well,\lambda}(m)$.
\end{proof}

\subsection{Bounded $\tauwell$-hypergeometric functions}

In this section we address the problem of characterizing the
$\tauwell$-hypergeometric functions which are bounded.
Recall that we are considering a multiplicity function $m = (m_\rms, m_\rmm, m_\rml)\in \mathcal{M}_+$ and its
deformation $m(\ell, \well)$ as in \eqref{eq:mult-l-tl-gen}. To apply the estimates from subsection \ref{subsection:taul}, we will assume that $\ell \in [\ell_{\min}, \ell_{\max}]$. The asymptotics from Theorem \ref{thm:hc-expansion} hold under the stronger assumption that $\ell \in ]\ell_{\min}, \ell_{\max}[$.

Recall also that
$$\rho(m(\ell, \well))=\frac{1}{2} \sum_{\a \in \Sigma^+} m_\a(\ell, \well) \a=
 \rho(m)-\frac{\ell}{2}\, \sum_{j=1}^r \beta_j + \frac{\well}{2} \sum_{1 \leq i < j \leq r} (\beta_j \pm \beta_i)$$
 where $\sum_{1 \leq i < j \leq r} (\beta_j \pm \beta_i)$ is given by \eqref{eq:sum-middle} and so, $$ \rho(m(2\well)) = \rho(m(\ell, \well))+\frac{\ell}{2} \sum_{j=1}^r \beta_j + \frac{\well}{2} \sum_{1 \leq i < j \leq r} (\beta_j \pm \beta_i) .$$ 
Notice that, if we suppose $\ell \in [0,\ell_{\rm max}],\well \geq -\frac{m_\rms}{2}$ then $m(\ell, \well)\in \mathcal{M}_+\cup \mathcal{M}_3$ by Lemma \ref{lemma:M+M3}. Hence  $\rho(m(\ell, \well))\in \overline{(\mathfrak{a}^*)^+}$. Notice also that $\rho(m(2\well)) \geq \rho(m)$ if $\well \geq 0.$

\begin{Thm}
\label{thm:bdd}
Assume that $m_\rml \geq 1$ and $\well \geq 0$ if $m_\rmm > 0,$ $\well > 0$ if $m_\rmm =0.$
Fix $\ell \in \R$ with $\ell \in ]\ell_{\rm min}-1, \ell_{\rm max}[$. Then, the $\tauwell$-hypergeometric
function $F_{\ell, \well, \l}(m)$ is bounded if and only if $\l \in
C(\rho(m(2\well))) + i \fa^\ast,$ where $C(\rho(m(2\well)))$ is the convex hull of
the set $\{w\rho(m(2\well)):~w \in W\}.$ 
\end{Thm}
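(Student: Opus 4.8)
The plan is to transport the proof of Theorem \ref{thm:bddhyp} (i.e.\ of \cite[Theorem 4.2]{NPP}) to the $\tauwell$-setting, the essential change being that $\rho(m)$ is systematically replaced by $\rho(m(2\well))$. The conceptual reason is \eqref{rho2well}: on $\overline{\fa^+}$ the prefactor $u^{-\ell}v^{-\well}$ in \eqref{eq:Fell-F} behaves like $e^{-(\rho(m(2\well))-\rho(m(\ell,\well)))(\cdot)}$, so that the growth of $F_\l(m(\ell,\well))$ --- governed by $\rho(m(\ell,\well))$ --- is shifted, and the growth of $F_{\ell,\well,\l}(m)$ becomes governed by $\rho(m(2\well))$. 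Since $m_\rml\geq1$ forces $m_\rms+m_\rml\geq1$ and the hypotheses on $\well$ hold, the sharp estimates and asymptotics of Corollary \ref{cor:pos-est-ell} are available on the whole range $\ell\in\,]\ell_{\rm min}-1,\ell_{\rm max}[$ (using the symmetry \eqref{eq:Fellm-even-ell} for the left part of the interval); moreover $\rho(m(2\well))$ is then strictly dominant, a fact I will use below. As $F_{\ell,\well,\l}(m)$ is $W$-invariant in $\l$ and $C(\rho(m(2\well)))+i\fa^*$ is $W$-invariant, I may assume throughout that $\Re\l$ is dominant.

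For sufficiency, suppose $\nu:=\Re\l\in C(\rho(m(2\well)))$. By \eqref{eq:b-e1-F2} one has $|F_{\ell,\well,\l}(m)|\leq F_{\ell,\well,\nu}(m)$, so it suffices to bound the real function $F_{\ell,\well,\nu}(m)$. Writing $\rho(m(2\well))-\nu=\sum_i c_i\a_i$ with $c_i\geq0$ and $\a_i$ simple (the convex-hull characterization of a Weyl orbit), we get $(\nu-\rho(m(2\well)))(x)\leq0$ on $\overline{\fa^+}$, and feeding this into the asymptotics \eqref{asymp-ell-well} gives boundedness on $\overline{\fa^+}$, hence on $\fa$ by $W$-invariance. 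The delicate point is the polynomial prefactor $\prod_{\a\in\Sigma_\nu^0}(1+\a(x))$ on the rays $x_0\in\overline{\fa^+}$ where $(\nu-\rho(m(2\well)))(x_0)=0$: there $\a_i(x_0)=0$ for every $i$ with $c_i>0$, and strict dominance of $\rho(m(2\well))$ forces $c_i>0$ for every simple root orthogonal to $\nu$; since each root in $\Sigma_\nu^0$ is a combination of the simple roots orthogonal to $\nu$, it vanishes at $x_0$ and the prefactor stays bounded. This is exactly where the hypotheses $m_\rml\geq1$ and $\well\geq0$ ($\well>0$ if $m_\rmm=0$) enter, through the strict dominance of $\rho(m(2\well))$.

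For necessity I argue by contraposition: if $\nu=\Re\l\notin C(\rho(m(2\well)))$, then, $\nu$ and $\rho(m(2\well))$ being dominant, there is a regular $x_0\in\fa^+$ with $(\nu-\rho(m(2\well)))(x_0)>0$. Transporting Theorem \ref{thm:hc-expansion} to the present setting via \eqref{eq:Fell-F} (as indicated after Corollary \ref{cor:leading-termF-realcase}), the leading behaviour of $F_{\ell,\well,\l}(m;tx_0)$ as $t\to+\infty$ is a nonzero multiple of $b_0\big(m(\ell,\well);\l\big)\,\pi_0(tx_0)\,e^{(\l-\rho(m(2\well)))(tx_0)}$, and $b_0\neq0$ because $m(\ell,\well)\in\mathcal{M}_1$ and $\Re\l$ is dominant, cf.\ \eqref{eq:b-nonzero}. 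The modulus therefore grows at least like $e^{t(\nu-\rho(m(2\well)))(x_0)}\to\infty$, so $F_{\ell,\well,\l}(m)$ is unbounded.

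The main obstacle is the sufficiency argument on the walls of $\overline{\fa^+}$: one must prevent the polynomial prefactor from destroying boundedness in directions where the exponential is only nondecaying. As explained above, this is precisely controlled by the strict dominance of $\rho(m(2\well))$, which guarantees that every root orthogonal to $\nu$ lies in the support of $\rho(m(2\well))-\nu$ and hence vanishes on the critical rays. All remaining steps --- the reduction to dominant $\Re\l$, the estimate \eqref{eq:b-e1-F2}, the asymptotics \eqref{asymp-ell-well}, and the non-vanishing of $b_0$ --- are faithful transcriptions of the corresponding steps in \cite[Theorem 4.2]{NPP} and Theorem \ref{thm:bddhyp}.
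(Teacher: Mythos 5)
Your proof is correct, but its sufficiency half takes a genuinely different route from the paper's, while your necessity half coincides with the paper's. For boundedness when $\Re\l\in C(\rho(m(2\well)))$, the paper uses holomorphy in $\l$ and a Phragm\'en--Lindel\"of argument (as in \cite[Theorem 4.2]{NPP}) to reduce to the single extreme point $\l=\rho(m(2\well))$, and then bounds $F_{\ell,\well,\rho(m(2\well))}(m)$ by combining the exact identity $F_{\rho(m(\ell,\well))}(m(\ell,\well))\equiv 1$ with the monotonicity estimate \eqref{eq:basic-estimate3}; the factors $u^{-\ell}v^{-\well}$ then absorb the exponential shift, giving the explicit bound $2^M$, uniform in $\l$ over the whole tube and valid on the closed range $\ell\in[\ell_{\min}-1,\ell_{\max}]$, $\well\geq 0$ (cf.\ the Remark following the theorem). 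You instead stay with the real parameter $\nu=\Re\l$ after \eqref{eq:b-e1-F2} and invoke the sharp two-sided estimates \eqref{asymp-ell-well}, reducing everything to a root-theoretic fact which you state correctly and which does hold: if $\rho:=\rho(m(2\well))$ is strictly dominant and $\rho-\nu=\sum_i c_i\a_i$ with $c_i\geq 0$, then $c_j>0$ for every simple $\a_j$ orthogonal to $\nu$ (indeed $0<\inner{\rho}{\a_j}=\inner{\rho-\nu}{\a_j}\leq c_j|\a_j|^2$, since $\inner{\a_i}{\a_j}\leq 0$ for $i\neq j$). One refinement: rather than checking only the critical rays where the exponent vanishes, state the quantitative consequence that every $\a\in\Sigma_\nu^0$ satisfies $\a(x)\leq C\sum_i c_i\a_i(x)$ on $\overline{\fa^+}$, so that the prefactor times the exponential is bounded by $(1+Cs)^N e^{-s}$ with $s=\sum_i c_i\a_i(x)\geq 0$; boundedness must be verified along arbitrary sequences tending to infinity in $\overline{\fa^+}$, not only along rays. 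As for what each approach buys: yours avoids the complex-analytic maximum principle and makes explicit the geometric role of the hypotheses on $m_\rml$ and $\well$ (strict dominance of $\rho(m(2\well))$), but it requires the sharp estimates of Corollary \ref{cor:pos-est-ell}(d) — hence the open interval in $\ell$ — and produces a bound whose constant depends on $\Re\l$; the paper's argument needs only the basic estimates, yields a bound uniform over the tube, and survives on the closed parameter range. Your necessity argument (transport of Theorem \ref{thm:hc-expansion} via \eqref{eq:Fell-F}, nonvanishing of $b_0$ for $m(\ell,\well)\in\mathcal{M}_1$ after the symmetry reduction in $\ell$, oscillatory details deferred to \cite{NPP}) is exactly the paper's, deferred to the same source.
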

\begin{proof}
First note that $m(\ell, \well) \in \mathcal{M}_1.$ Since $F_{\ell, \well, \l} = F_{-\ell+m_\rml-1, \well, \l}$ we may assume that $ 0 \leq \frac{m_\rml -1}{2} \leq \ell < \ell_{\rm max} $. We show that $F_{\ell, \well, \l}$ is bounded if $\l \in
C(\rho(m(2\well)))+i \fa^\ast.$  Let $\l \in C(\rho(m(2\well)))+i \fa^\ast$ and consider the holomorphic function $\l \to F_{\ell, \well, \l}(m ; x)$
(for a fixed $x$). Since $$ |F_{\ell, \well, \l}(m ; x)| \leq F_{\ell, \well,
\Re \l}(m ; x) = u^{-\ell}(x) v^{-\well}(x) F_{\Re \l}(m(\ell, \well); x),$$ we can
argue as in the proof of \cite[Theorem 4.2]{NPP} to obtain that
the maximum of $|F_{\ell, \well, \l}(m; x)|$ is attained at
$\{w\rho(m(2\well)):~w \in W\}.$ That is
$$
|F_{\ell, \l}(m; x)| \leq u^{-\ell}(x) v^{-\well}(x) F_{\rho(m(2\well))}(m(\ell, \well); x).
$$
As noticed before,  $\rho(m(\ell, \well))\in \overline{(\fa^\ast)^+}.$
Applying \eqref{eq:basic-estimate3}, we obtain
\begin{multline*}
F_{\rho(m(2\well))}(m(\ell, \well); x) =
F_{\rho(m(\ell, \well)) + \frac{\ell}{2} \sum_{j=1}^r \beta_j + \frac{\well}{2} \sum_{1 \leq i < j \leq r} (\beta_j \pm \beta_i)}(m(\ell, \well);
x) \\
\leq F_{\rho(m(\ell, \well))}(m(\ell, \well) ; x)e^{\max_w w \left [ \frac{\ell}{2}
\sum_{j=1}^r \beta_j(x) + \frac{\well}{2} \sum_{1 \leq i < j \leq r} (\beta_j \pm \beta_i)(x) \right ]}.
\end{multline*}
Now, $F_{\rho(m(\ell, \well))}(m(\ell, \well); x) =1$; see e.g. \cite[Lemma 4.1]{NPP} (the proof extends to every multiplicity function for which the symmetric and non-symmetric hypergeometric functions are defined).
Choose $x_0$ in the $W$-orbit of $x$ so that
$$
\max_w w \left [ \frac{\ell}{2}
\sum_{j=1}^r \beta_j(x) + \frac{\well}{2} \sum_{1 \leq i < j \leq r} (\beta_j \pm \beta_i)(x) \right ] = \frac{\ell}{2} \sum_{j=1}^r
\beta_j(x_0) + \frac{\well}{2} \sum_{1 \leq i < j \leq r} (\beta_j \pm \beta_i)(x_0)
$$
Since $u$ and $v$ are $W$-invariant, from the above we get
$$
|F_{\ell, \well, \l}(m; x)| \leq u^{-\ell}(x_0) v^{-\well}(x_0)
e^{ \frac{\ell}{2}\sum_{j=1}^r \beta_j(x_0)} e^{\frac{\well}{2} \sum_{1 \leq i < j \leq r}(\beta_j \pm \beta_i)(x_0)} \leq 2^M
$$
where $M$ is a constant depending on $\ell, \well$ and $r.$

To prove the
other way, we proceed as in \cite{NPP} (see pages 251--252). Let
$\lambda_0$ be such that $\Re \lambda_0 \in
\overline{(\fa^\ast)^+} \setminus C(\rho(m(2\well))).$ Let $x_1 \in \fa^+$
be such that $(\Re \lambda_0 - \rho(m(2\well)))(x_1) > 0.$ If $F_{\ell, \well,
\lambda_0}(m)$ is bounded we have $$ \lim_{t \to \infty} F_{\ell, \well,
\lambda_0}(m;tx_1) e^{-t(\Re \lambda_0 -\rho(m(2\well)))(x_1)} t^{-d} = 0.$$
Since, $F_{\ell, \well \lambda_0}(m;x)  = u^{-\ell}(x) v^{-\well}(x) F_{\lambda_0}(m(\ell, \well);
x)$ and $\cosh a$ behaves like $e^a$ for large positive $a,$
we have
$$ F_{\lambda_0}(m(\ell, \well); tx_1) e^{-t(\Re \lambda_0 - \rho(m(\ell, \well)))(x_1)}
t^{-d} \to 0 \qquad t \to \infty.$$
So, by Theorem \ref{thm:hc-expansion} (which is applicable as $m(\ell, \well) \in \mathcal{M}_1$), the proof can be completed as in Theorem 4.2 of \cite{NPP}.
\end{proof}

\begin{Rem}
The proof of Theorem \ref{thm:bdd} shows that the $\tauwell$-hypergeometric
function $F_{\ell, \well, \l}(m)$ is bounded for $\l \in C(\rho(m(2\well))) + i \fa^\ast$, provided $\ell \in [\ell_{\rm min}-1, \ell_{\rm max}]$ and $\well \geq 0.$
For general root systems of type $BC_r$, we cannot prove that when
$\ell=\ell_{\rm max}$ the function $F_{\ell, \well, \l}(m)$ is not bounded for $\l \notin
C(\rho(m(2\well))) + i \fa^\ast$. This is because the function $b_0$ appearing in Theorem \ref{thm:hc-expansion} might vanish. However the above theorem continues to hold for $\ell = \ell_{\rm max}$ provided $r =1.$
Indeed, for $\ell=\ell_{\rm max}$ we have $m_\rms(\ell)+2m_\rml(\ell)=0$ and, by \eqref{eq:b-nonzero},  the function $b_0(m(\ell_{\rm max});\lambda_0)$ vanishes for $\lambda_0 \in \frakacs$ with $\Re \lambda_0 \in \overline{(\fa^\ast)^+},$ if and only if there is $j\in \{1,\dots,r\}$ such that
$(\lambda_0)_{\beta_j}=0$. Outside these $\lambda_0$, the asymptotics from Theorem \ref{thm:hc-expansion} still hold. In the rank-one case, there is
only one $\lambda_0$ for which the asymptotics do not hold, namely $\lambda_0=0$ which is not outside $C(\rho(m)) + i \fa^\ast$. So the above result holds under the assumption $\ell \in [\ell_{\rm min}, \ell_{\rm max}],$ in the rank one case.
\end{Rem}

\begin{Rem} 
It is natural to expect that $F_{\ell, \well, \lambda}$ is bounded by one if $\lambda \in C(\rho(m(2\well))) + i \fa^\ast$, though we are not able to prove this. However, this holds true for the geometric case as can be seen from Corollary \ref{Cor:estimates-spherical} and from the integral formula \eqref{integral-formula-tau} together with the case-by-case computation that $\rho_{G/K}=\rho(m(2\well))$; see subsection \ref{subsection:smallKtype}.
\end{Rem}

\section{Some geometric cases}
\label{section:geometric}

\subsection{Spherical functions on line bundles over Hermitian symmetric spaces}
\label{subsection:tauell-spherical}

In this subsection we shall assume that $\well =0$ and consequently suppress the index $\well$ from the notation.

Let $m=(m_\rms,m_\rmm,m_\rml=1)$ be a geometric multiplicity function
corresponding to a non-compact Hermitian symmetric space $G/K$, as in Example \ref{rem:hermitian}.
Let $\tau_\ell$ be a fixed one dimensional unitary representations of $K$. So 
$\ell\in \Z$ (or $\ell \in \R$ by passing to universal covering spaces).
Let $E_\ell$ denote the homogeneous line bundle on $G/K$ associated with $\tau_\ell$. The smooth $\tau_{\ell}$-spherical sections of $E_\ell$ can be identified with the space $C^\infty(G//K;\tau_\ell)$ of functions on $G$ satisfying
$f(k_1gk_2)=\tau_{-\ell}(k_1k_2)f(g)$ for all $g\in G$ and $k_1,k_2\in K$. 
Recall our convention of identifying the Cartan subspace $\mathfrak{a}$ with its diffeomorphic image $A=\exp(\mathfrak{a})\subset G$. So every element $\varphi\in C^\infty(G//K;\tau_\ell)$ is uniquely determined by its Weyl-group invariant restriction $\varphi|_{\mathfrak{a}}$ to $A=\mathfrak{a}$. 
Recall also the notation 
$\D(G/K;\tau_\ell)$ for the (commutative) algebra of the $G$-invariant differential operators on $E_\ell$. Then the $\tau_{\ell}$-spherical functions on $G/K$ are the (suitably normalized) joint eigenfunctions of $\D(G/K;\tau_\ell)$ belonging to $C^\infty(G//K;\tau_\ell)$.  For a fixed $\ell$, they are parametrized by $\lambda\in \mathfrak a_\C^*$ (modulo the Weyl group). We denote by $\varphi_{\ell,\lambda}$ the $\tau_{\ell}$-spherical function of spectral parameter $\lambda$.

Many authors have studied the $\tau_{\ell}$-spherical functions by relating them to hypergeometric functions on root systems. See e.g. \cite[\S 5.2--5.5]{HS94}, \cite[\S 6]{ShimenoEigenfunctions}, \cite[\S 3 and
5]{ShimenoPlancherel}, \cite{HoOl14}.
Indeed, $\D_\ell(m)=\delta_\ell(\D_\ell(G/K))$, where $\delta_\ell$ denotes the $\tau_\ell$-radial component in the sense of Harish-Chandra \cite{HC60}; see \cite[Section 3.6]{OS17}. More specifically, 
\begin{equation}
\label{eq:Lell-deltaell}
D_{\ell, p_L} =\delta_\ell(\Omega)-\tau_{-\ell}(\Omega_\fm)\,,
\end{equation}
where $\Omega$ and $-\Omega_\fm$ are the Casimir operator on $\mathfrak{g}_\C$ and 
$\mathfrak{m}_\C$, respectively.
See \cite[Lemma 2.4]{ShimenoPlancherel} and \cite[Section 3.5]{OS17}. 
Hence
\begin{equation}
\label{phillambda-as-F}
    \varphi_{\ell,\lambda}|_\mathfrak{a}=F_{\ell,\l}(m)=u^{-\ell}F_\lambda(m(\ell))\,.
\end{equation}
Since $m_\rml=1$, the classical symmetry $\varphi_{\ell,\l}=\varphi_{-\ell,\l}$ is a special case of the symmetry $F_{\ell,\l}=F_{-\ell+m_\rml-1,\l}$ from \eqref{eq:Fellm-even-ell}.

In the usual parametrization, the trivial representation of $K$ corresponds to $\ell=0$. In this case, $C^\infty(G//K;\tau_0)$ is the space of the smooth $K$-invariant functions on $G/K$, $\D_0(G/K)$ coincides with the algebra $\D(G/K)$ of $G$-invariant differential operators on $G/K$, and the $\tau_{0}$-spherical functions $\varphi_{0,\lambda}$ are precisely Harish-Chandra's spherical functions $\varphi_{\lambda}$ on $G/K$.

The following proposition summarizes the basic properties of the
$\tau_{\ell}$-spherical functions. As in the $K$-biinvariant case, they can be explicitly given by an integral formula, which extends to arbitrary real $\ell$'s the classical integral formula by Harish-Chandra's for the spherical function $\varphi_\l=\varphi_{0,\lambda}$ on $G/K$.

\begin{Prop}
For $\ell \in \mathbb R$ and $\lambda \in \frakacs$ set
\begin{equation}
\label{eq:integral-formula}
 \varphi_{\ell,\lambda}(g) = \int_{K/Z(G)} e^{(\lambda -
\rho)(H(gk))} \tau_{\ell}(k\kappa(gk)^{-1})~dk, \qquad g \in G.
\end{equation}
Then the set of functions $\varphi_{\ell, \lambda}$, $\lambda \in
\frakacs$ exhausts the class of (elementary) $\tau_{\ell}$-spherical functions on $G.$ Two such functions $\varphi_{\ell,\lambda}$
and $\varphi_{\ell,\mu}$ are equal if and only if $\mu =
w\lambda$ for some $w \in W.$ Moreover, for a fixed $g \in G,$
$\varphi_{\ell, \lambda}(g)$ is holomorphic in $(\lambda, \ell)
\in \frakacs \times \mathbb C$. Furthermore, $\varphi_{\ell, \lambda}=\varphi_{-\ell, \lambda}$.
\end{Prop}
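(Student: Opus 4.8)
The plan is to show that, as $\lambda$ ranges over $\frakacs$, the integral \eqref{eq:integral-formula} produces exactly the joint eigenfunctions of $\D(G/K;\tau_\ell)$ lying in $C^\infty(G//K;\tau_\ell)$, and then to read off the parametrization, the holomorphy and the symmetry from this identification together with the properties of the hypergeometric functions $F_{\ell,\lambda}(m)$ recorded in \eqref{phillambda-as-F} and \eqref{eq:Fellm-even-ell}. First I would verify the covariance $\varphi_{\ell,\lambda}(k_1 g k_2)=\tau_{-\ell}(k_1 k_2)\varphi_{\ell,\lambda}(g)$, so that $\varphi_{\ell,\lambda}\in C^\infty(G//K;\tau_\ell)$. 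This is a direct computation from the Iwasawa projection: left $K$-invariance gives $H(k_1 x)=H(x)$ and $\kappa(k_1 x)=k_1\kappa(x)$, so the left factor contributes $\tau_\ell(k_1^{-1})$, while for the right factor one substitutes $k\mapsto k_2^{-1}k$ in the Haar integral. Since $\tau_\ell$ is a one-dimensional character, both factors come out of the integral and combine, using $\tau_\ell(k^{-1})=\tau_{-\ell}(k)$, into $\tau_{-\ell}(k_1 k_2)$. (The integrand descends to $K/Z(G)$ because $Z(G)\subset K$ is central and cancels in $k\kappa(gk)^{-1}$, and it is smooth in $g$ over the compact domain, so $\varphi_{\ell,\lambda}$ is smooth.)

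Next, to identify $\varphi_{\ell,\lambda}$ as a $\tau_\ell$-spherical function I would use that the plane wave $g\mapsto e^{(\lambda-\rho)(H(g))}$ is a joint eigenfunction of the $G$-invariant differential operators, the $K$-average in \eqref{eq:integral-formula} being precisely the projection onto the $\tau_\ell$-isotypic bundle; equivalently, \eqref{eq:integral-formula} is the normalized matrix coefficient of the unique $K$-type-$\tau_\ell$ vector in the principal series $\pi_\lambda$ induced from $MAN$ with inducing datum $\tau_\ell|_M$ (unique by Frobenius reciprocity, since $\tau_\ell|_M$ is irreducible for a small $K$-type). Either way $\varphi_{\ell,\lambda}$ is a joint eigenfunction of $\D(G/K;\tau_\ell)$ with spectral parameter $\lambda$, which at the level of the radial Casimir operator is the content of \eqref{eq:Lell-deltaell} and is classical by \cite{HC60,ShimenoPlancherel,OS17}. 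Since, by \eqref{phillambda-as-F} and the Gelfand-triple theory for $(G,K,\tau_\ell)$, every $\tau_\ell$-spherical function is determined by its $W$-invariant restriction to $\fa$, and these restrictions are exactly the functions $F_{\ell,\lambda}(m)=u^{-\ell}F_\lambda(m(\ell))$, $\lambda\in\frakacs$, the family \eqref{eq:integral-formula} exhausts all $\tau_\ell$-spherical functions. The parametrization follows at once: $\varphi_{\ell,\lambda}=\varphi_{\ell,\mu}$ if and only if $F_\lambda(m(\ell))=F_\mu(m(\ell))$ (the factor $u^{-\ell}$ being nonvanishing), which holds if and only if $\mu\in W\lambda$ (cf. \cite[Theorem 4.4.2]{HS94}).

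For the holomorphy in $(\lambda,\ell)\in\frakacs\times\C$ at fixed $g$, I would differentiate under the integral sign: the integrand $e^{(\lambda-\rho)(H(gk))}\tau_\ell(k\kappa(gk)^{-1})$ is entire in $(\lambda,\ell)$ and, being continuous on the compact domain $K/Z(G)$, is uniformly bounded on compact subsets of $\frakacs\times\C$, so Morera's theorem together with Fubini yields joint holomorphy. Finally, the symmetry $\varphi_{\ell,\lambda}=\varphi_{-\ell,\lambda}$ is, as already observed after \eqref{phillambda-as-F}, the specialization $m_\rml=1$ of the relation $F_{\ell,\lambda}(m)=F_{-\ell+m_\rml-1,\lambda}(m)$ in \eqref{eq:Fellm-even-ell}, transported through \eqref{phillambda-as-F} to the restriction to $A\equiv\fa$ that determines the spherical function.

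The hard part will be the identification step that simultaneously yields the eigenfunction property and the exhaustion. Making it rigorous requires either the precise normalization of the principal series realization (checking that the matrix coefficient of the $K$-type-$\tau_\ell$ vector reduces, after the Iwasawa change of variables, to exactly \eqref{eq:integral-formula}) or an explicit computation of the restriction of \eqref{eq:integral-formula} to $\fa$ matching $u^{-\ell}F_\lambda(m(\ell))$; once this is secured, the remaining assertions are formal consequences of results already available here and in \cite{HC60,Go52,ShimenoPlancherel,OS17}.
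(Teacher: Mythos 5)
Your proposal is not a different route from the paper so much as a reconstruction of the route the paper outsources: the paper's entire proof is the citation ``See \cite[Proposition 6.1]{ShimenoEigenfunctions}'', and what you have written is, in outline, exactly that classical Godement--Harish-Chandra argument for one-dimensional $K$-types, realized through the principal series with inducing datum $\tau_\ell|_M$. Within that outline, your covariance computation is correct; your use of \eqref{phillambda-as-F} and \eqref{eq:Fellm-even-ell} is non-circular, since both are established in the paper before the Proposition via the radial-part identity \eqref{eq:Lell-deltaell} and not via the integral formula; and the parametrization ($W$-invariant polynomials separate $W$-orbits) and the Morera/Fubini holomorphy argument are fine. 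But the step you yourself flag as ``the hard part'' is the entire mathematical content of the Proposition: one must actually prove that \eqref{eq:integral-formula} is a joint eigenfunction of $\D(G/K;\tau_\ell)$ with Harish-Chandra parameter exactly $\lambda$, equivalently that its restriction to $\fa$ equals $u^{-\ell}F_\lambda(m(\ell))$. Without this, neither the exhaustion nor the identification with the abstractly defined $\varphi_{\ell,\lambda}$ follows. The step does not fail --- it is Harish-Chandra's computation that $U(\frakg_\C)^K$ acts by the scalar determined by the Harish-Chandra homomorphism on the one-dimensional $\tau_\ell$-isotypic subspace of the principal series (one-dimensional by your Frobenius-reciprocity remark), or equivalently the radial-part computation on the horospherical wave $e^{(\lambda-\rho)(H(\cdot))}\tau_\ell(\kappa(\cdot))^{-1}$ --- but as written it remains a pointer, so your text is a correct proof sketch rather than a proof; the missing computation is precisely what the paper's citation covers.

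One genuine mathematical caveat you should make explicit: the final identity $\varphi_{\ell,\lambda}=\varphi_{-\ell,\lambda}$ cannot hold as an identity of functions on $G$. The two sides have opposite covariance, $\varphi_{\pm\ell,\lambda}(k_1gk_2)=\tau_{\mp\ell}(k_1k_2)\varphi_{\pm\ell,\lambda}(g)$, and already on $K$ one computes from \eqref{eq:integral-formula} that $\varphi_{\ell,\lambda}(k)=\tau_{-\ell}(k)\neq\tau_{\ell}(k)=\varphi_{-\ell,\lambda}(k)$ whenever $\tau_{2\ell}$ is nontrivial. The statement must be read as equality of the restrictions to $A\equiv\fa$ (equivalently, of the scalar radial functions), and that is exactly what your argument via \eqref{eq:Fellm-even-ell} with $m_\rml=1$ proves, in agreement with the paper's remark following \eqref{phillambda-as-F}. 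Your phrase ``the restriction to $A$ that determines the spherical function'' is where care is needed: the restriction determines the function on $G$ only after the type ($\ell$ versus $-\ell$) is fixed, so equality of restrictions does not upgrade to equality on $G$.
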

\begin{proof}
See \cite[Proposition 6.1]{ShimenoEigenfunctions}.
\end{proof}

The integral formula together with the properties of Harish-Chandra's spherical functions $\varphi_\l$ ($\ell = 0$ case) automatically
implies several of the properties of the $\tau_{\ell}$-spherical functions we are studying in this paper. 
Nevertheless, others, such as their positivity or the full characterization of the bounded spherical functions, do not seem to follow from \eqref{eq:integral-formula}. We collect all properties in the following corollary. 
Recall that $\ell_{\max}=\frac{m_\rms}{2}+1$ because $m_\rml=1$.


\begin{Cor}\label{Cor:estimates-spherical}
Let $\ell\in \R,$ then we have:
\begin{enumerate}
\item $\varphi_{\ell, \l}$ is real valued for $\l\in\fa^*$.
\item $\varphi_{\ell, \l}|_{\mathfrak a}$ is positive for $\l \in \fa^*$ and $|\ell|\leq \ell_{\max}.$
\item $|\varphi_{\ell,\l}|\leq \varphi_{\Re\l}$ for $\l\in\fa_\C^*$.
\item $|\varphi_{\ell,\l}(m)|\leq 1$ for $\l \in
C(\rho(m)) + i \fa^\ast$, where $C(\rho(m))$ is the convex hull of
the set $\{w\rho(m): w \in W\}.$
\item Let
$|\ell|<\ell_{\max}$. The $\tau_{\ell}$-spherical function $\varphi_{\ell, \l}(m)$ on $G/K$ is bounded if and only if $\lambda \in C(\rho(m)) + i \fa^\ast,$ where $C(\rho(m))$ is the convex hull of
the set $\{w\rho(m):~w \in W\}.$ Moreover, $|\varphi_{\ell, \l}(m;x)| \leq 1$ for all $\l \in
C(\rho(m)) + i \fa^\ast$ and $x \in \fa$.
\end{enumerate}
\end{Cor}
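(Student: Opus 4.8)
The plan is to reduce every assertion either to the integral representation \eqref{eq:integral-formula} or, through the identity $\varphi_{\ell,\l}|_{\mathfrak a}=F_{\ell,\l}(m)=u^{-\ell}F_\l(m(\ell))$ from \eqref{phillambda-as-F}, to the properties of the $\tauwell$-hypergeometric functions with $\well=0$ established in subsection \ref{subsection:taul}. The guiding observation is that, since $\tau_\ell$ is one-dimensional and unitary, the transformation rule $\varphi_{\ell,\l}(k_1gk_2)=\tau_{-\ell}(k_1k_2)\varphi_{\ell,\l}(g)$ gives $|\varphi_{\ell,\l}(k_1ak_2)|=|\varphi_{\ell,\l}(a)|$ for $a\in A$; hence by the Cartan decomposition $G=KAK$ the modulus, the boundedness, and the supremum of $\varphi_{\ell,\l}$ on $G$ coincide with those of $F_{\ell,\l}(m)$ on $\mathfrak a$. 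This is what lets properties proved on $\mathfrak a$ propagate to all of $G$.

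For (1) I would argue directly on \eqref{eq:integral-formula}. Taking complex conjugates and using that $H(gk)$, $\rho$ and $dk$ are real while $\overline{\tau_\ell(\,\cdot\,)}=\tau_{-\ell}(\,\cdot\,)$ for the unitary character $\tau_\ell$, one gets $\overline{\varphi_{\ell,\l}(g)}=\varphi_{-\ell,\l}(g)$ whenever $\l\in\mathfrak a^*$. Combined with the symmetry $\varphi_{\ell,\l}=\varphi_{-\ell,\l}$ from the proposition preceding this corollary, this yields $\overline{\varphi_{\ell,\l}}=\varphi_{\ell,\l}$, i.e. real-valuedness. For (3) I would again estimate inside \eqref{eq:integral-formula}: bounding $|\tau_\ell(\,\cdot\,)|=1$ and $|e^{(\l-\rho)(H(gk))}|=e^{(\Re\l-\rho)(H(gk))}$ turns the integral into Harish-Chandra's integral formula for the ordinary spherical function, giving $|\varphi_{\ell,\l}(g)|\le \varphi_{\Re\l}(g)$ for every $g\in G$ and every $\ell\in\R$.

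For (2) I would use $\varphi_{\ell,\l}|_{\mathfrak a}=u^{-\ell}F_\l(m(\ell))$ with $u>0$, so that the sign is governed by $F_{\ell,\l}(m)$. Since $m$ is geometric with $m_\rml=1$ we have $\ell_{\min}=-\tfrac{m_\rms}{2}$ and $\ell_{\max}=\tfrac{m_\rms}{2}+1$, whence $\ell_{\min}-1=-\ell_{\max}$ and the hypothesis $|\ell|\le\ell_{\max}$ is exactly $\ell\in[\ell_{\min}-1,\ell_{\max}]$. The strict positivity of $F_{\ell,\l}(m)$ on this range (for $\l\in\mathfrak a^*$, $\well=0$) is furnished by Corollary \ref{cor:pos-est-ell}(b), and multiplying by the positive factor $u^{-\ell}$ gives (2).

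Finally, (4) and (5) rest on the boundedness theory. For the ``bounded iff'' part of (5) I would invoke Theorem \ref{thm:bdd} with $\well=0$: its hypotheses $m_\rml\ge1$ and $\ell\in\,]\ell_{\min}-1,\ell_{\max}[$ hold precisely when $|\ell|<\ell_{\max}$, and $\rho(m(2\well))=\rho(m)$, so $F_{\ell,\l}(m)$ is bounded on $\mathfrak a$ iff $\l\in C(\rho(m))+i\mathfrak a^*$; the equivalence then transfers to $\varphi_{\ell,\l}$ on $G$ by the unitarity remark above. For the sharp bound $|\varphi_{\ell,\l}|\le1$ in (4) and (5) I would combine (3) with the estimate $\varphi_{\Re\l}\le1$ for $\Re\l\in C(\rho(m))$; the latter is the $\ell=\well=0$ instance of the first part of the proof of Theorem \ref{thm:bdd}, where \eqref{eq:basic-estimate3} together with $F_{\rho(m)}(m)=1$ (see \cite[Lemma 4.1]{NPP}) yields $F_{\Re\l}(m)\le F_{\rho(m)}(m)=1$, this being the classical Helgason--Johnson estimate for ordinary spherical functions. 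The main obstacle is precisely this constant: Theorem \ref{thm:bdd} by itself only bounds $F_{\ell,\well,\l}$ by $2^M$, so the value $1$ cannot be read off from the hypergeometric estimates alone and genuinely requires the geometric input of \eqref{eq:integral-formula}, as flagged in the remark following Theorem \ref{thm:bdd}.
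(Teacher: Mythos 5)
Your proposal is correct and follows essentially the same route as the paper: parts (1), (3) and (4) are obtained from the integral formula \eqref{eq:integral-formula} together with the symmetry $\varphi_{\ell,\l}=\varphi_{-\ell,\l}$ and unitarity of $\tau_\ell$, while parts (2) and (5) are reduced via \eqref{phillambda-as-F} to the positivity results of Section \ref{section:basic} and to Theorem \ref{thm:bdd} with $\well=0$, exactly as in the paper's proof. The only divergence is minor: for the bound $\varphi_{\Re\l}\leq 1$ in (4) the paper simply cites the Helgason--Johnson theorem \cite{HeJo}, whereas you re-derive it internally from the first part of the proof of Theorem \ref{thm:bdd} at $\ell=\well=0$ --- which is legitimate, though there the operative ingredient is the Phragm\'en--Lindel\"of reduction to the vertices $w\rho(m)$ (the step ``argue as in \cite[Theorem 4.2]{NPP}'') together with $F_{\rho(m)}(m)=1$, rather than \eqref{eq:basic-estimate3}, which plays no role when the shift $\rho(m(2\well))-\rho(m(\ell,\well))$ vanishes.
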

\begin{proof}
The first property follows from the equality $\varphi_{\ell,\l}=\varphi_{-\ell,\l}$ because $\tau_\ell+\tau_{-\ell}$ is real valued. 
For part (2), by the symmetry in $\ell$, 
we can suppose that $\ell\geq 0$. So $m(\ell)\in \mathcal{M}_1$. The positivity 
of $\varphi_{\ell,\l}|_{\mathfrak{a}}$ follows then from Propositions \ref{prop:positivity},(a) and \ref{prop:positivity-estimates-M3}. Part (3) is a consequence of the integral formula, and the fact that $\tau_\ell$ is a unitary character. Part (4) follows from (3) and the theorem by Helgason and Johnson characterizing the parameters $\lambda$ for which the Harish-Chandra's spherical functions $\varphi_\lambda$ are bounded; see \cite{HeJo}. 
Finally, for (5), we can suppose that $\ell\geq 0$ so that $m(\ell)\in \mathcal{M}_1$. Then (5) is an immediate consequence of (3) together with the second part of Theorem \ref{thm:bdd}. 
\end{proof}


\subsection{$\tau$-spherical functions for other small $K$-types $\tau$}
\label{subsection:smallKtype}

We start by briefly recalling the main result from \cite{OS17}, which expresses spherical functions associated to small $K$-types as hypergeometric functions multiplied with an explicit function involving hyperbolic sines and cosines. We refer to \cite{OS17} for the proofs of the properties mentioned here and for further information.

Let $G$ be a non-compact connected real semisimple Lie group with finite center. Let $G = KAN$ be an Iwasawa decomposition and let $M$ be the centralizer of $A$ in $K.$
Recall that a $K$-type $(\tau, V_\tau)$ (i.e. an irreducible representation of $K$) is called \textit{small} if it is irreducible as an $M$-module. We denote by $\phi_\lambda^\tau$
the $\tau$-spherical function on $G$ with spectral parameter $\lambda\in \mathfrak{a}^*$. Moreover, with a slight abuse of notation, we denote by $\phi_\lambda^\tau|_\mathfrak{a}$ the $W$-invariant scalar function such that
the restriction of $\phi_\lambda^\tau$ to $A\equiv\mathfrak{a}$ 
is $\phi_\lambda^\tau|_\mathfrak{a} \id$, where 
$\id$ denotes the identity operator on $V_\tau$.

\begin{Thm} \textup{(}See \cite[Theorem 1.6]{OS17}\textup{)}
\label{thm:smalltypes}
Suppose $(\tau, V)$ is a small $K$-type of a non-compact real simple Lie group $G$ with finite center. If $G$ is a simply-connected split Lie group $\widetilde{G}_2$ of type $G_2$, we further suppose that $\tau$ is not the small type $\pi_2$ specified in Theorem 2.2 of \cite{OS17}. Then there exists a root system $\Sigma(\tau)$ in $\mathfrak{a}^\ast$ and a multiplicity function $m(\tau)$ on $\Sigma(\tau)$ such that 
$$
\phi_\lambda^\tau|_{\mathfrak{a}}= \widetilde{\delta}^{-\frac{1}{2}}_{G/K}~\widetilde{\delta}_{\Sigma(\tau)}(m(\tau))^{\frac{1}{2}}~F_\lambda(\Sigma(\tau), m(\tau))
$$ 
for all $\lambda \in \mathfrak{a}_{\mathbb C}^\ast.$
\end{Thm}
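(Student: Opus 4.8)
The plan is to transfer the identity to a statement about differential operators and their normalized eigenfunctions. Since $\tau$ is small, $\tau|_M$ is irreducible, so Deitmar's criterion makes $\D(G/K;\tau)$ commutative and Schur's lemma forces the restriction of $\phi_\lambda^\tau$ to $A\equiv \fa$ to be scalar: $\phi_\lambda^\tau|_{\fa}\cdot\id$ with $\phi_\lambda^\tau|_{\fa}$ a $W$-invariant function on $\fa$ equal to $1$ at the origin. This scalar function is a joint eigenfunction of the $\tau$-radial parts of the operators in $\D(G/K;\tau)$. The pivotal operator is the Casimir $\Omega$ of $\frakg_\C$: by Harish-Chandra's radial reduction, its $\tau$-radial component $\delta_\tau(\Omega)$, restricted to the scalar functions, is a $W$-invariant second-order operator on $\fa_{\rm reg}$ whose first-order part coincides with that of the scalar Laplacian $L$ and whose zeroth-order potential $V_\tau$ records the action of $\tau$ on the root vectors. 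The prototype of the relation to be established is \eqref{eq:Lell-deltaell}.

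First I would conjugate $\delta_\tau(\Omega)$ by the half-density $\wt\delta_{G/K}^{1/2}$, exactly as in the scalar case $\ell=0$; since the first-order part is $\tau$-independent this absorbs it and produces a Schr\"odinger operator $L_\fa + V_\tau + c_\tau$, where the constant $c_\tau$ comes from $\inner{\rho}{\rho}$ and the scalar $\tau(\Omega_\fm)$. On the hypergeometric side, Lemma \ref{comp-formula1} puts the Heckman--Opdam Laplacian $L_{\Sigma(\tau)}(m(\tau))+\langle\rho_{\Sigma(\tau)}(m(\tau)),\rho_{\Sigma(\tau)}(m(\tau))\rangle$ into the same shape $L_\fa + f_{\Sigma(\tau)}(m(\tau))$. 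The goal is then to produce a root system $\Sigma(\tau)\subset\fa^*$ and a multiplicity $m(\tau)$ for which the two potentials agree up to an additive constant, i.e. $f_{\Sigma(\tau)}(m(\tau)) = V_\tau + \mathrm{const}$. In view of the explicit form \eqref{fSigma}, in which $f_\Sigma(m)$ is a sum of $(e^\alpha-e^{-\alpha})^{-2}$ terms with coefficients $m_\alpha(2-m_\alpha-2m_{2\alpha})\inner{\alpha}{\alpha}$, this reduces to a system of quadratic equations for the values of $m(\tau)$ on each root length, solved one root string at a time.

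The hard part is exactly this matching, and it does not proceed uniformly: it rests on the classification of the small $K$-types of the simple groups $G$ and, for each of them, on an explicit evaluation of $V_\tau$ from the structure of $\frakg_\C$. The $G_2$ exception in the statement records the only failure, where for the distinguished small type $\pi_2$ the potential $V_\tau$ is genuinely not of the form $f_\Sigma(m)$, so no $(\Sigma,m)$ exists and the reduction collapses. Once the potentials are matched, I would upgrade the equality of the two second-order operators to an equality of the full commutative algebras: by Lemma \ref{lemma:commutatorL}, $\D(m(\tau))$ is the centralizer of $L(m(\tau))$ in $\D_{\mathcal R}^W$, and the $\tau$-radial image of $\D(G/K;\tau)$ is likewise the centralizer of $\delta_\tau(\Omega)$ (this is the content used in Proposition \ref{prop:Dellwell}); since the two generators coincide after conjugation, the two centralizers do as well.

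Finally, conjugating back, both $\wt\delta_{G/K}^{1/2}\,\wt\delta_{\Sigma(\tau)}(m(\tau))^{-1/2}\,\phi_\lambda^\tau|_{\fa}$ and $F_\lambda(\Sigma(\tau),m(\tau))$ are $W$-invariant joint eigenfunctions of the same algebra $\D(m(\tau))$, with eigenvalues prescribed by $\lambda$ through the Harish--Chandra homomorphism \eqref{eq:HChomo}, and both equal $1$ at the origin (the normalizing factor $\wt\delta_{G/K}^{-1/2}\wt\delta_{\Sigma(\tau)}(m(\tau))^{1/2}$ having value $1$ there). By the uniqueness of the normalized solution of the hypergeometric system \eqref{eq:hypereq1}, the two functions coincide, which is the asserted formula. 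The only delicate point in this last step is that $\lambda$ need not be regular; this causes no trouble because the $W$-invariant normalized solution of \eqref{eq:hypereq1} is unique for every $\lambda\in\frakacs$.
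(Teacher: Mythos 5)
The paper does not prove this theorem: it is quoted from Oda--Shimeno \cite[Theorem 1.6]{OS17}, with all proofs explicitly deferred to that reference, so there is no internal argument to compare yours against. That said, your proposal is a faithful reconstruction of the strategy actually used in \cite{OS17}: Schur reduction of $\phi_\lambda^\tau|_{\fa}$ to a scalar $W$-invariant function, Harish-Chandra's radial component of the Casimir, conjugation by $\wt{\delta}_{G/K}^{1/2}$ into the Schr\"odinger form $L_{\fa}+V_\tau+\mathrm{const}$, case-by-case matching of $V_\tau$ with $f_{\Sigma(\tau)}(m(\tau))$ of \eqref{fSigma} via the classification of small $K$-types (the type $\pi_2$ of $\widetilde{G}_2$ being precisely the case where no pair $(\Sigma,m)$ matches), passage from the Casimir to the full commutative algebra by the centralizer characterization (Lemma \ref{lemma:commutatorL} together with \cite[Lemma 5.2]{OS17}, exactly as invoked in Proposition \ref{prop:Dellwell}), and uniqueness of the normalized $W$-invariant joint eigenfunction of \eqref{eq:hypereq1}. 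The two points you compress --- that the conjugated radial-part algebra genuinely fills out $\D(m(\tau))$ (needed because a single second-order equation does not determine the solution when $\dim\fa>1$, so the full system \eqref{eq:hypereq1} is required), and that the spectral parameter $\lambda$ is the same on both sides (tracking constants as in the analogue of \eqref{eq:Lell-deltaell}, equality up to $W$ sufficing by $W$-invariance of $F_\lambda$ in $\lambda$) --- are real verifications, but they are carried out in \cite{OS17} and your outline identifies the right tools for them.
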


In the above, $F_\lambda(\Sigma(\tau),m(\tau))$ stands for the Heckman-Opdam hypergeometric function associated with the triple $(\mathfrak{a},\Sigma(\tau), m(\tau))$. Here, we recall that we use the symmetric space notation and hence our $\Sigma(\tau)$ and $m(\tau)$ are related to
 $\Sigma^\tau$ and ${\bf{k}}^\tau$ of \cite{OS17} by $\Sigma^\tau = 2 \Sigma(\tau)$ and ${\bf{k}}^\tau_{2\alpha} = \frac{m_\alpha(\tau)}{2}.$ For any root system $(\Sigma, m)$, the function $\widetilde{\delta}_\Sigma(m)$  is defined as:
\begin{equation}
\label{eqn:delta-function}
\widetilde{\delta}_\Sigma(m) = \prod_{\alpha \in \Sigma^+} \left | \frac{\sinh \alpha}{|\alpha|} \right |^{m_\alpha}
\end{equation} 
and $\widetilde{\delta}_{G/K}$ corresponds to the root system for $G/K.$

The classification of the small $K$-types $\tau$ and the concrete choices (which are one or two) of pairs $(\Sigma(\tau),m(\tau))$ occurring in Theorem \ref{thm:smalltypes} can be found in \cite[Section2]{OS17}. 
Clearly, the trivial $K$-type is always small. If $G/K$ is Hermitian, a $K$-type $\tau$ is small if and only if it is a one-dimensional unitary character $\tau_\ell$, as in subsection \ref{subsection:tauell-spherical}. 
All other small $K$-types have dimension bigger than one. 

As in the scalar and the Hermitian cases, the $\tau$-spherical functions corresponding to a small $K$-type can be represented by the integral formula
\begin{equation}
\label{integral-formula-tau}
\varphi_\lambda^\tau(g)=\int_K e^{(\lambda-\rho_{G/K})(H(gk))} \tau(k\kappa(gk))^{-1} \, dk\,,
\end{equation}
see \cite[(3.7)]{OS17}, where $\rho_{G/K}$ is computed from the root system for $G/K$. Formula \eqref{integral-formula-tau} is a special instance of \cite[9.1.5, p. 300]{Warner2} and \cite[(3.9)]{Ca97}.  

Based on the case-by-case analysis of \cite[Section 2]{OS17}, we now show that every $\tau$-spherical function for a symmetric space $G/K$ with restricted root system of type $BC$ and for which $\dim\tau\geq 2$ is a $(\ell,\well)$-hypergeometric function for suitable choices of a multiplicity function $m=(m_\rms,m_\rmm,m_\rml)\in \mathcal{M}_+$, defined on a root subsystem of $\Sigma(\tau)$, and of values $\ell$ and $\well$. It follows, in particular, that $m$ is in general different from both the original root multiplicity function of $G/K$ and $m(\tau)$. 

This identification, together with \eqref{integral-formula-tau}, allows us to 
prove an analogue of 
Corollary \ref{Cor:estimates-spherical}, in which one has to replace $\varphi_{\ell,\lambda}$ with 
$\varphi_\lambda^\tau|_{\mathfrak{a}}$ and the condition $|\ell|\leq \ell_{\max}$ by the conditions $\ell_{\min}(m)-1\leq \ell_{\max}(m)$ and $\well\geq -\frac{m_\rmm}{2}$. Notice that the condition on $\ell$ is equivalent to the more symmetric condition 
$\big|\ell-(m_\rml-1)/2\big|\leq (m_\rms+m_\rml+1)/2$. The precise statement depends on the specific values of 
$\ell_{\min}(m)=-m_\rms/2$ and $\ell_{\max}(m)=
m_\rms/2+m_\rml$ determined below. We leave to the reader the task of writing down the precise statement in each case. 
In fact, the symmetry \eqref{eq:Fellm-even-ell} shows the multiplicity functions $m(\ell,\well)$ and  $m(-\ell+m_\rml-1,\well)$ are different if 
$\ell\neq \frac{m_\rml-1}{2}$ but yield
to the same $\tau$-spherical function.

 As remarked in \cite[Section 2]{OS17}, the classification of $K$-types can be given at the level of Lie algebras of non-compact type since a small $K$-type of $G$ always lifts to that of a finite cover of $G$. We keep the 
convention that a root multiplicity $0$ is equivalent to the fact that the corresponding root does not belong to the fixed root system. 


\subsubsection{The case of $\mathfrak{g} = \mathfrak{sp}(p, 1)$, $p\geq 2$.}
Let $G=\Sp(p,1)$ (which is simply connected) and $K=\Sp(p)\times \Sp(1)$. The small $K$-types are precisely the representations of the form $\tau_n=1\otimes \tau'_n$, the tensor product of the trivial representation of $\Sp(p)$ and the $n$-dimensional irreducible representation of $\Sp(1)=\SU(2)$ with $n\in \N$. The original root system is given by 
$\Sigma = \{\pm \alpha, \pm 2\alpha \}$ (of type $BC_1$),
with multiplicities $m_{\pm \alpha} = 4(p-1)$ and $m_{\pm 2\alpha} = 3.$ 
For the small type $\tau_n$, we have 
$\Sigma(\tau_n) = \Sigma$ and $m(\tau_n)_\pm= (4p-2\pm 2n, 1\mp 2n )$; see \cite[Section 2.3]{OS17}. The associated spherical function is given by 
\begin{equation}
\label{eq:sp-case}
\phi_\lambda^{\tau_n}|_\mathfrak{a} = 
(\cosh \alpha)^{-1\mp n}~F_\lambda(\Sigma, m(\tau_n)_\pm)\,.
\end{equation}
Notice that $m(\tau_n)_+=m(\ell_n)$ for $m=(4(p-1),3)\in \mathcal{M}_+$ and $\ell_n=n+1$. Moreover,
$\ell_{\min} = -2(p-1)$ and
$\ell_{\max} = 2p+1$. So $m(\tau_n)_+ \in \mathcal{M}_+ \cup \mathcal{M}_3$ provided $n \leq  2p$ and $m(\tau_n)_+ \in \mathcal{M}_1$ if $n<2p$. Notice that $\rho(m)=\rho_{G/K}=(2p+1)\alpha$.

Since $m(\tau_n)_-=m(-\ell+m_\rml-1)$, where $-\ell+m_\rml-1=-n+1$, the fact that the two multiplicity functions lead to the same $\tau_n$-spherical function
is a special instance of the symmetry \eqref{eq:Fellm-even-ell}.


\subsubsection{The case of $\mathfrak{g}=\mathfrak{so}(2r,1)$, $r\geq 2$}
\label{SO2r1}
Let $G=\Spin(2r,1)$ be the double cover of $\SO(2r,1)$ and $K=\Spin(2r)$. Then $G$ is simply connected. The non-trivial small $K$-types are the irreducible representations $\tau_s^\pm$ with highest weight $(s/2,\dots, s/2,\pm s/2)$ in standard notation, where $s\in \N$. The case $s=1$ corresponds to the 
positive and negative spin representations and the corresponding $\tau$-spherical analysis was studied in \cite{CaPe}. 

The root system of $G/K$, say $\{\pm \alpha\}$, is of type $A_1$ and $m_\alpha=2r-1$. According to \cite[Section  2.4]{OS17}, $\Sigma(\tau_s^\pm)=\{\pm \alpha/2,\pm \alpha\}$, $m(\tau_s^\pm)=(-2s,2r+2s-1)$ and
$$
\varphi_\lambda^{\tau_s^\pm}|_{\mathfrak{a}}=\cosh^s\big(\frac{\alpha}{2}\big) \; F_\lambda(\Sigma(\tau_s^\pm), m(\tau_s^\pm))\,.
$$
Then $m(\tau_s^\pm)=m(\ell_s)$ for $m=(0,2r-1)\in \mathcal{M}_+$ on $\Sigma(\tau_s^\pm)$ and $\ell_s=-s$. Notice that $\ell_{s,{\min}}=0$ and $\ell_{s,{\max}}=2r-1$. The symmetry \eqref{eq:Fellm-even-ell} gives (with $\well=0$) 
$F_{-s,\lambda}(m)=F_{s+2r-2,\lambda}(m)$. For $s=1$, the symmetric of $\ell_s=-1$ is $2r-1$ and $m(2r-1)\in \mathcal {M}_3$ (but $\notin \mathcal {M}_1$). Observe also that
$\rho(m)=\rho_{G/K}=(r-1/2)\alpha$.


\subsubsection{The case of $\mathfrak{g}=\mathfrak{so}(p,q)$, $p > q\geq 3$}
\label{SOpq}
Let $G$ be the double cover of $\Spin(p,q)$ (which is simply connected) and $K=\Spin(p)\times \Spin(q)$. The root systems of $G/K$ is $\{\pm \beta_j; 1 \leq j \leq q\} \cup \{\pm \beta_j\pm \beta_i; 1 \leq i< j \leq q\}$, with root multiplicities 
$m_{\beta_j}=p-q$ and $m_{\beta_j\pm\beta_i}=1$.
According to \cite[Section 2.5]{OS17}, the small $K$-types are of two forms:
\begin{itemize}
    \item[(i)] $\tau=1\otimes \sigma$, where $\sigma$ is the spin representation of $\Spin(q)$ if $q$ is odd, and either of the two spin representations of $\Spin(q)$ if $q$ is even.
    \item[(ii)] $\tau=\sigma\otimes 1$, where $\sigma$ is either of the two spin representations of $\Spin(p)$ if $p$ is even and $q$ is odd.
\end{itemize}
In case (i), $\Sigma(\tau)=
\{\pm\beta_j; 1\leq j\leq q\}\cup \{\frac{\pm \beta_j\pm \beta i}{2}; 1\leq i < j \leq q\}$ with 
$m(\tau)=(0,1,p-q)$. In case (ii), $\Sigma(\tau)=
\{\pm\frac{\beta_j}{2}; 1\leq j\leq q\}\cup \{\frac{\pm \beta_j\pm \beta i}{2}; 1\leq i < j \leq q\} \cup \{\pm\beta_j; 1\leq j\leq q\}$ with 
$m(\tau)=(2(p-q), 1,-(p-q))$.
Furthermore, $\phi_\lambda^\tau|_{\mathfrak{a}}$ equals
$$
\prod_{1\leq i\leq j\leq q} \Big( \cosh\big(\frac{\beta_j+ \beta_i}{2}\big)
    \cosh\big(\frac{\beta_j- \beta_i}{2}\big)
    \Big)^{-1/2} F_\lambda(\Sigma(\tau),  m(\tau)), \qquad \text{in case (i)},
$$
and
\begin{multline*}
\prod_{j=1}^q \Big( \cosh\big(\frac{\beta_j}{2}\big)\Big)^{-(p-q)}
    \prod_{1\leq i\leq j\leq q} \Big( \cosh\big(\frac{\beta_j+ \beta_i}{2}\big)
    \cosh\big(\frac{\beta_j- \beta_i}{2}\big)
    \Big)^{-1/2} F_\lambda(\Sigma(\tau),  m(\tau)),\\
\text{in case (ii)}\,.
\end{multline*}
The two cases come from the multiplicity on $\Sigma(\tau)$ equal to
$m=(0,0,p-q)\in \mathcal{M}_+$ and 
\begin{itemize}
\item[]
$(\ell,\well)=(0,1/2)$ in case (i), so that $m(\ell,\well)=(0,1,p-q)$\,; 
\item[]
$(\ell,\well)=(p-q,1/2)$ in case (ii), so that $m(\ell,\well)=(2(p-q),1,-(p-q))$\,.
\end{itemize}
Notice that $\ell_{\min}(m)=0$ and $\ell_{\max}(m)=m_\rml=p-q \geq 1$.
Hence, in both cases, $m(\ell,\well)\in \mathcal{M}_+\cup \mathcal{M}_3$ and $m(\ell,\well)\notin \mathcal{M}_1$. The symmetry \eqref{eq:Fellm-even-ell} with $\ell+m_\rml-1=-\ell+p-q-1$ yields the equalities
\begin{eqnarray*}
&&F_{0,1/2,\lambda}(m)=
F_{p-q-1,1/2,\lambda}(m)\,, \qquad\text{in case (i)}\,,\\
&&F_{p-q,1/2,\lambda}(m)=
F_{-1,1/2,\lambda}(m)\,, \qquad\text{in case (ii)}\,.
\end{eqnarray*}
Since $m_\rml=p-q \geq 1$, we see for instance that Theorem \ref{thm:bdd} applies to case (i). Notice also that $\rho_{G/K}=\rho(m(2\well))=\sum_{j=1}^q \big(\frac{p-q}{2}+(j-1)\big) e_j$.

\appendix

\section{The Heckman-Opdam hypergeometric functions as functions of their parameters}
\label{appendix:A}

Let $\fa$ be an $r$-dimensional Euclidian real vector space, with an inner product
$\inner{\cdot}{\cdot}$, and let $\Sigma$ be a root system $\Sigma$ in $\fa^*$ of Weyl group $W$.
Let $\mathcal M_\C$ denote the set of complex-valued multiplicity functions $m=\{m_\alpha\}$ on $\Sigma$.
(Hence $\mathcal M_\C\equiv \C^d$ where $d$ is the number of Weyl group orbits in $\Sigma$.)

In this appendix we collect the regularity properties of the (symmetric and nonsymmetric) hypergeometric
functions $F_\l(m;x)$ and $G_\l(m;x)$ as functions of $(m,x,\l)\in \mathcal M_\C\times \fa_\C \times \fa_\C^*$.
Most of the results are known, but scattered in the literature.

Recall the Gindikin-Karpelevich formula for Harish-Chandra's $c$-function:
\begin{equation}\label{eq:c}
c(m;\l)=\frac{\wt{c}(m;\l)}{\wt{c}(m;\rho(m))}  \qquad (\l\in\fa_\C^*)\,,
\end{equation}
where $\wt{c}(m;\l)$ is given in terms of the positive indivisible roots $\a \in \Sigma_{\rm i}^+$ by
\begin{equation} \label{eq:wtc}
\wt{c}(m;\l)=\prod_{\a \in \Sigma_{\rm i}^+} \frac{2^{-\la} \; \Gamma(\la)}
{\Gamma\Big(\frac{\la}{2}+\frac{m_\a}{4}+\frac{1}{2}\Big)
\Gamma\Big(\frac{\la}{2}+\frac{m_\a}{4}+\frac{m_{2\a}}{2}\Big)}
\end{equation}
and $\Gamma$ is the classical gamma function. Set (see \cite[p. 196]{OpdamIV})
$$
\mathcal{M}_{F, {\rm reg}}=\{m \in \mathcal{M}_\C: \text{$\frac{1}{\wt{c}(m;\rho(m))}$ is not singular}\}
$$
For an irreducible representation $\delta\in \widehat{W}$ and $m\in \mathcal{M}_\C$, let
$\varepsilon_\delta(m)=\sum_{\a\in\Sigma^+} m_\a (1-\chi_\delta(r_\alpha)/\chi_\delta(\id))$,
where $r_\alpha$ is the reflection across $\Ker(\alpha)$ and $\chi_\delta$ is the character of $\delta$.
Let $d_\delta$ be the lowest embedding degree of $\delta$ in $\C[\fa_\C]$, and
set (see \cite[Definition 3.13]{OpdamActa})
$$
\mathcal{M}_{G, {\rm reg}}=\{m \in \mathcal{M}: \text{$\Re(\varepsilon_\delta(m))+d_\delta>0$ for all $\delta\in
\widehat{W}\setminus \{{\rm triv}\}$}\}\,.
$$
Finally, let
$$
\Omega=\{x \in \fa: \text{$|\alpha(x)|<\pi$ for all $\alpha\in \Sigma^+$}\}\,.
$$
The regularity properties of the (symmetric and nonsymmetric) functions are summarized in the following theorem.

\begin{Thm}
\label{thm:reg-prop}
The hypergeometric function $F_\lambda(m;x)$ is  holomorphic in $(m,x,\l)\in
\mathcal{M}_{F, {\rm reg}} \times (\fa+i\Omega) \times \fa_\C^*$ and satisfies
$$F_{\l}(m;wx) = F_{\l}(m;x)=F_{w\l}(m;x) \qquad (m \in \mathcal{M}_{F, {\rm reg}}, \, w\in W, \, x \in \fa+i\Omega, \l\in \fa_\C^* )\,.$$
The (non-symmetric) hypergeometric function $G_\lambda(m;x)$ is a holomorphic function of $(m,x,\l)\in
\mathcal{M}_{G, {\rm reg}} \times (\fa+i\Omega) \times \fa_\C^*$.
\end{Thm}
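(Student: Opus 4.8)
The plan is to assemble both assertions from the foundational regularity results of Opdam and of Heckman-Schlichtkrull, combined with the Harish-Chandra series machinery recalled in subsection~\ref{subsection:asymptotics}, and then to check that the domains appearing in the statement are exactly those on which the known holomorphy persists. The two functions are handled separately, since they rest on different pieces of the theory: $F_\lambda$ on the $c$-function expansion, and $G_\lambda$ on the spectral theory of the Cherednik operators.

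First I would treat the symmetric function $F_\lambda(m;x)$. The $W$-invariance relations $F_\lambda(m;wx)=F_\lambda(m;x)=F_{w\lambda}(m;x)$ are immediate from the defining property: $F_\lambda(m)$ is the unique $W$-invariant solution of the system \eqref{eq:hypereq1} normalized by $f(0)=1$, and since $p(w\lambda)=p(\lambda)$ for every $p\in\polya^W$, the function $F_{w\lambda}(m)$ solves the same system with the same normalization, forcing the equality. For the holomorphy I would start from the recursion for the coefficients $\Gamma_\mu(m;\lambda)$ recalled earlier, which exhibits them as rational functions of $(m,\lambda)$ with poles confined to the hyperplanes $\langle\mu,\mu-2\lambda\rangle=0$; the Harish-Chandra series then defines $\Phi_\lambda(m;x)$ as a function holomorphic in $x$ on a tube over the chamber, the set $\Omega$ being precisely the region of convergence. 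For $\lambda$ regular one has $F_\lambda(m)=\sum_{w\in W}c(m;w\lambda)\,\Phi_{w\lambda}(m)$, and $F_\lambda$ extends to all of $\fa+i\Omega$ by $W$-invariance and analytic continuation. Holomorphy in $(x,\lambda)$ on $(\fa+i\Omega)\times\frakacs$ is \cite[Theorem 4.4.2]{HS94}; the point requiring care is holomorphy in $m$, which I would read off from the gamma-function structure \eqref{eq:c}--\eqref{eq:wtc} of the $c$-function and \cite[p.~196]{OpdamIV}, noting that $\mathcal{M}_{F,\mathrm{reg}}$ is designed so that the normalizing factor $1/\wt{c}(m;\rho(m))$ stays nonsingular, which is exactly what renders the $W$-symmetrization holomorphic in $m$.

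For the non-symmetric function $G_\lambda(m;x)$ I would argue directly from Opdam's construction. By definition $G_\lambda(m)$ is the joint eigenfunction of the Cherednik operators $T_\xi(m)$ with eigenvalues $\lambda(\xi)$, solving \eqref{eq:hypereq2} with $g(0)=1$; its existence and holomorphy are established in \cite[Theorem 3.15]{OpdamActa}, where the relevant domain in the multiplicity variable is exactly $\mathcal{M}_{G,\mathrm{reg}}$ of \cite[Definition 3.13]{OpdamActa}. The condition $\Re(\varepsilon_\delta(m))+d_\delta>0$ for all nontrivial $\delta\in\widehat{W}$ guarantees the nondegeneracy of the spectral data entering the construction, and holomorphy in $(x,\lambda)$ on $(\fa+i\Omega)\times\frakacs$ follows from the same reference. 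The proof then reduces to recording that these are precisely the domains asserted in the statement.

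The main obstacle is the holomorphy in the multiplicity parameter $m$, not the holomorphy in $(x,\lambda)$, which is comparatively routine once convergence of the series is in hand. The individual terms $c(m;w\lambda)\,\Phi_{w\lambda}(m)$ in the expansion of $F_\lambda(m)$ carry genuine poles along the singular hyperplanes of the $c$-function, and one must show that these poles cancel in the $W$-sum exactly on $\mathcal{M}_{F,\mathrm{reg}}$; the analogous subtlety for $G_\lambda(m)$ is the control of the spectral degeneracies governed by $\varepsilon_\delta$. Both difficulties are fully resolved in the cited works of Opdam, so in practice the argument amounts to identifying our sets $\mathcal{M}_{F,\mathrm{reg}}$ and $\mathcal{M}_{G,\mathrm{reg}}$ with the regularity sets appearing there and quoting the corresponding holomorphy statements.
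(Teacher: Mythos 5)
Your route is the same as the paper's: both claims are reduced to citations of Opdam and Heckman--Schlichtkrull (\cite[Theorem 2.8]{OpdamIV}, \cite[Theorem 3.15 and Definition 3.13]{OpdamActa}, \cite[Theorem 4.4.2]{HS94}), with the $W$-invariance relations for $F_\lambda$ read off from uniqueness in the defining system; that part is fine and matches the paper's proof almost verbatim.

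There is, however, one genuine gap: the justification of the \emph{explicit} domain $\fa+i\Omega$ in the $x$-variable. The results you quote establish holomorphy of $F_\lambda(m;x)$ and $G_\lambda(m;x)$ only on \emph{some} $W$-invariant tubular neighborhood $V$ of $\fa$ in $\fa_\C$, without identifying it; the statement that one may take $V=\fa+i\Omega$, and that this is the maximal such tube, is a separate point, which the paper settles by citing Faraut's remark \cite[Remark 3.17]{BOP}. Your substitute justification --- that ``$\Omega$ is precisely the region of convergence'' of the Harish-Chandra series --- is not correct: the series $\sum_{\mu}\Gamma_\mu(m;\lambda)e^{-\mu(x)}$ converges on (a tube over) the positive chamber, i.e.\ its convergence is governed by the condition $\alpha(\Re x)>0$ on the \emph{real} part, whereas $\Omega$ is a constraint $|\alpha(\Im x)|<\pi$ on the \emph{imaginary} part, reflecting the singularities $\alpha(x)\in\pi i\Z\setminus\{0\}$ of the coefficients $(1-e^{-2\alpha})^{-1}$ of the hypergeometric system. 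Consequently the expansion $F_\lambda=\sum_{w\in W}c(m;w\lambda)\Phi_{w\lambda}(m)$ by itself yields holomorphy only over the chamber, and passing to the full $W$-invariant tube $\fa+i\Omega$ (across the walls, where the system has regular singularities) requires an additional argument or the citation above. The same issue affects your treatment of $G_\lambda$, for which you also assert the domain $(\fa+i\Omega)\times\fa_\C^*$ on the strength of \cite[Theorem 3.15]{OpdamActa} alone.
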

\begin{proof}
For $\fa+i\Omega$ replaced by a $W$-invariant tubular domain $V$ of $\fa$ in $\fa_\C$, these results are due to Opdam, see \cite[Theorem 2.8]{OpdamIV} and \cite[Theorem 3.15]{OpdamActa}. See also \cite[Theorem 4.4.2]{HS94}. The remark that the maximal tubular domain $V$ is $\fa+i\Omega$ was made by Jacques Faraut, see \cite[Remark 3.17]{BOP}.
\end{proof}

\begin{Prop}
Set
\begin{eqnarray}
\label{eq:MC+}
\mathcal{M}_{\C,+}&=&\{m\in \mathcal{M}_\C: \text{$\Re(m_\a)\geq 0$ for every $\alpha\in \Sigma^+$}\}\\
\label{eq:MC0}
\mathcal{M}_{\C,0}&=&\{m\in \mathcal{M}_\C: \text{$\Re(m_\a+m_{2\a})\geq 0$ for every $\alpha\in \Sigma_{\rm i}^+$}\}\,.
\end{eqnarray}
Then
$$\mathcal{M}_{\C,+}\subset \mathcal{M}_{\C,0} \subset
\mathcal{M}_{F, {\rm reg}} \cap \mathcal{M}_{G, {\rm reg}}\,.$$
Moreover, $\mathcal{M}_{\C,0}$ is stable under deformations by $\ell\in \C$ as in \eqref{eq:mult-l-tl-gen}:
$m\in \mathcal{M}_{\C,0}$ if and only if $m(\ell)\in \mathcal{M}_{\C,0}$.
\end{Prop}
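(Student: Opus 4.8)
The plan is to treat the three assertions in order of increasing difficulty. Since for a \emph{reduced} root system every indivisible $\a$ has $m_{2\a}=0$, in that case $\mathcal{M}_{\C,0}=\mathcal{M}_{\C,+}$ and there is nothing new; so the content lies entirely in the non-reduced ($BC$) case, which I keep in mind throughout. The inclusion $\mathcal{M}_{\C,+}\subset\mathcal{M}_{\C,0}$ is immediate: if $\Re(m_\a)\ge 0$ for all $\a\in\Sigma^+$, then for each $\a\in\Sigma_{\rm i}^+$ we get $\Re(m_\a+m_{2\a})=\Re(m_\a)+\Re(m_{2\a})\ge 0$ (with $m_{2\a}=0$ when $2\a\notin\Sigma$). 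The deformation statement is equally soft: the deformation \eqref{eq:mult-l-tl-gen} with $\well=0$ replaces $(m_\rms,m_\rmm,m_\rml)$ by $(m_\rms+2\ell,m_\rmm,m_\rml-2\ell)$ and hence leaves $m_\rms+m_\rml$ and $m_\rmm$—equivalently all the sums $m_\a+m_{2\a}$—unchanged; since these are precisely the quantities whose real parts are constrained in \eqref{eq:MC0}, membership in $\mathcal{M}_{\C,0}$ is preserved, and the equivalence follows because $m\mapsto m(\ell)$ is inverted by $m\mapsto m(-\ell)$.

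For $\mathcal{M}_{\C,0}\subset\mathcal{M}_{G,{\rm reg}}$ the point is that $\a$ and $2\a$ define the same reflection, $r_\a=r_{2\a}$. Grouping each indivisible root with its double therefore gives
$$\varepsilon_\delta(m)=\sum_{\a\in\Sigma_{\rm i}^+}(m_\a+m_{2\a})\Big(1-\frac{\chi_\delta(r_\a)}{\chi_\delta(\id)}\Big).$$
As $r_\a$ has order two, the eigenvalues of $\delta(r_\a)$ are $\pm1$, so $\chi_\delta(r_\a)$ is an integer with $|\chi_\delta(r_\a)|\le\chi_\delta(\id)$ and each coefficient $1-\chi_\delta(r_\a)/\chi_\delta(\id)$ lies in $[0,2]$. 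For $m\in\mathcal{M}_{\C,0}$ every $\Re(m_\a+m_{2\a})\ge0$, whence $\Re(\varepsilon_\delta(m))\ge 0$. Since the trivial representation is the only one of embedding degree $0$, we have $d_\delta\ge1$ for $\delta\neq{\rm triv}$, and thus $\Re(\varepsilon_\delta(m))+d_\delta\ge1>0$, which is the defining condition of $\mathcal{M}_{G,{\rm reg}}$.

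The inclusion $\mathcal{M}_{\C,0}\subset\mathcal{M}_{F,{\rm reg}}$ is the heart of the proposition and I would prove it from the Gindikin--Karpelevich formula \eqref{eq:wtc}. With $\rho_\a:=\langle\rho(m),\a\rangle/\langle\a,\a\rangle$,
$$\frac{1}{\wt{c}(m;\rho(m))}=\prod_{\a\in\Sigma_{\rm i}^+}2^{\rho_\a}\,\frac{\Gamma\big(\tfrac{\rho_\a}{2}+\tfrac{m_\a}{4}+\tfrac12\big)\,\Gamma\big(\tfrac{\rho_\a}{2}+\tfrac{m_\a}{4}+\tfrac{m_{2\a}}{2}\big)}{\Gamma(\rho_\a)}\,,$$
so a singularity can only arise when a numerator $\Gamma$ hits a pole (a non-positive integer argument) that is not compensated by a pole of some denominator $\Gamma(\rho_\a)$. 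Computing $\rho_\a$ explicitly for $\Sigma$ of type $BC_r$, I would first record that the first argument $\tfrac{\rho_\a}{2}+\tfrac{m_\a}{4}+\tfrac12$ has real part $\ge\tfrac12$ for short and for ``difference'' middle roots once $m\in\mathcal{M}_{\C,0}$, so these contribute no pole. The remaining potential poles are matched, root by root, against the zeros coming from the factors $\Gamma(\rho_\a)^{-1}$: the rank-one case is the model, where $\tfrac{\rho_\a}{2}+\tfrac{m_\a}{4}+\tfrac{m_{2\a}}{2}=\rho_\a$, the quotient collapses, and $\tfrac{1}{\wt{c}(m;\rho(m))}=2^{\rho_\a}\Gamma\big(\tfrac{m_\rms+m_\rml+1}{2}\big)$ is manifestly holomorphic because $\Re(m_\rms+m_\rml)\ge0$. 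In higher rank the same coincidence recurs in the shape $\tfrac{\rho_\a}{2}+\tfrac{m_\a}{4}+\tfrac{m_{2\a}}{2}=\rho_{\a'}$ for an explicit companion root $\a'$—for instance the second argument attached to the short root $\beta_j/2$ equals $\rho_{\a'}$ for the middle root $\a'=(\beta_j+\beta_1)/2$—which pairs each numerator pole with a denominator pole.

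The main obstacle is this final bookkeeping: one must verify that for every $m\in\mathcal{M}_{\C,0}$ the numerator $\Gamma$-poles are matched \emph{with multiplicity} by the $\Gamma(\rho_\a)$-poles throughout the product, so that no net pole survives. A clean way to organize it is to observe that the pole locus of $\tfrac{1}{\wt{c}(m;\rho(m))}$ in $\C^3$ should depend only on the $\ell$-invariant combinations $m_\rms+m_\rml$ and $m_\rmm$; granting this, regularity is stable under the $\ell$-deformation, and since every $m\in\mathcal{M}_{\C,0}$ equals $m^{0}(\ell)$ for some $m^{0}\in\mathcal{M}_{\C,+}$ and some real $\ell\in[-\tfrac{\Re m_\rml}{2},\tfrac{\Re m_\rms}{2}]$ (an interval non-empty precisely because $\Re(m_\rms+m_\rml)\ge0$), the inclusion reduces to the classical positive case $\mathcal{M}_{\C,+}\subset\mathcal{M}_{F,{\rm reg}}$. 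Proving that $\ell$-invariance of the pole locus is exactly the content of the explicit product simplification above, and is where the genuine work lies.
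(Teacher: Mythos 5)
Your handling of $\mathcal{M}_{\C,+}\subset\mathcal{M}_{\C,0}$, of the $\ell$-deformation statement, and of the inclusion $\mathcal{M}_{\C,0}\subset\mathcal{M}_{G,{\rm reg}}$ is correct, and for the last one your argument is the paper's own: the paper records exactly the regrouped formula $\varepsilon_\delta(m)=\sum_{\alpha\in\Sigma_{\rm i}^+}(m_\alpha+m_{2\alpha})\bigl(1-\chi_\delta(r_\alpha)/\chi_\delta(\id)\bigr)$ together with the bound $\chi_\delta(r_\alpha)/\chi_\delta(\id)\in[-1,1]$; your remark that $d_\delta\geq 1$ for $\delta\neq{\rm triv}$ makes explicit a step the paper leaves tacit.

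The genuine gap is the one you flag yourself: the inclusion $\mathcal{M}_{\C,0}\subset\mathcal{M}_{F,{\rm reg}}$ is not actually proved. Your structural observations are right — in rank one the second numerator argument of \eqref{eq:wtc} equals $\rho_\alpha$, and for $j\geq 2$ the second argument of the short root $\beta_j/2$, namely $\tfrac{m_\rms}{2}+m_\rml+\tfrac{(j-1)m_\rmm}{2}$, coincides identically with $\rho_{\alpha'}$ for $\alpha'=(\beta_j+\beta_1)/2$ — but this pairing disposes of exactly one Gamma per short root. Left unmatched are the denominators $\Gamma(\rho_{\beta_j/2})$, the denominators of $(\beta_j+\beta_i)/2$ with $i\geq 2$, and, crucially, \emph{both} numerator Gammas of every root $(\beta_j+\beta_i)/2$: their arguments $\tfrac{m_\rms+2m_\rml+(i+j-1)m_\rmm+2}{4}$ and $\tfrac{m_\rms+2m_\rml+(i+j-1)m_\rmm}{4}$ are governed by $m_\rms+2m_\rml$, which has no sign constraint on $\mathcal{M}_{\C,0}$; for instance $(m_\rms,m_\rmm,m_\rml)=(10,0,-8)\in\mathcal{M}_{\C,0}$ makes the root $(\beta_2+\beta_1)/2$ contribute a factor $\Gamma(-1)$. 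So genuine poles survive your pairing, and your proposed way around the bookkeeping — "grant" that the singular locus depends only on $m_\rms+m_\rml$ and $m_\rmm$, then deform into $\mathcal{M}_{\C,+}$ — assumes precisely what must be shown: that invariance is not a formal feature of the product \eqref{eq:wtc} but is equivalent to the cancellation in question, and the reduction would not help anyway, since the same colliding numerator/denominator poles occur at boundary points of $\mathcal{M}_{\C,+}$ (e.g.\ where $m_\rmm=0$).

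What actually closes the argument is a telescoping based on Legendre's duplication formula, which you never invoke. For $\alpha=(\beta_j+\beta_i)/2$ the two numerator arguments differ by $\tfrac12$, so, setting $H_k:=\Gamma\bigl(\tfrac{m_\rms+2m_\rml+k m_\rmm}{2}\bigr)$, their product equals $H_{i+j-1}$ up to an entire nonvanishing factor and that root contributes $H_{i+j-1}/H_{i+j-2}$; the short root $\beta_j/2$ contributes $\Gamma\bigl(\tfrac{m_\rms+m_\rml+(j-1)m_\rmm+1}{2}\bigr)H_{j-1}/H_{2j-2}$; and since $\prod_{i=1}^{j-1}H_{i+j-1}/H_{i+j-2}=H_{2j-2}/H_{j-1}$, all the $H$'s cancel for each fixed $j$. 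The roots $(\beta_j-\beta_i)/2$ likewise collapse, via duplication, to ratios $\Gamma\bigl(\tfrac{(d+1)m_\rmm}{2}\bigr)/\Gamma\bigl(\tfrac{d m_\rmm}{2}\bigr)$, each a ratio of simple poles in the single variable $m_\rmm$ and hence regular near $m_\rmm=0$. One is left with $\prod_{j=1}^{r}\Gamma\bigl(\tfrac{m_\rms+m_\rml+(j-1)m_\rmm+1}{2}\bigr)$ times factors regular near $\mathcal{M}_{\C,0}$, and these Gammas are pole-free there since their arguments have real part at least $\tfrac12$; this computation also establishes, a posteriori, your invariance principle. Note finally that the paper does not carry out this bookkeeping either: it disposes of the inclusion by citing \cite[Remark 4.4.3]{HS94}, so what your proposal is missing is precisely the content of that citation.
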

\begin{proof}
The inclusion $\mathcal{M}_{\C,0} \subset \mathcal{M}_{F, {\rm reg}} $ was observed in \cite[Remark 4.4.3]{HS94} and
follows from the properties of the Gamma function. Notice that $\Re(\rho(m)_\alpha)\geq \Re(\frac{m_\a}{2}+m_{2\a})$ for $\a\in \Sigma_{\rm i}^+$, with simplifications in the factor of $c(m;\rho(m))$ corresponding to $\a$ in case of
equality with $\frac{m_\a}{2}+m_{2\a}$ real.
For the inclusion  $\mathcal{M}_{\C,0} \subset \mathcal{M}_{G, {\rm reg}}$, observe that
$\varepsilon_\delta(m)=\sum_{\a\in\Sigma_{\rm i}^+} (m_\a+m_{2\a}) (1-\chi_\delta(r_\alpha)/\chi_\delta(\id))$,
with $\chi_\delta(r_\alpha)/\chi_\delta(\id))\in [-1,1]$.
\end{proof}

\section{Some computations}
\label{appendix:B}

In this appendix, we prove two inequalities stated in subsection \ref{subsection:sharp}. 
We begin showing that for every
$\lambda \in\mathfrak{a}^*$ and $\xi \in \mathfrak{a}$
\begin{equation}
\label{eq:inequality}
\partial_\xi \Big( e^{K_\xi \frac{\inner{\xi}{\cdot}}{|\xi|^2}} F_{\lambda}(m;\cdot) \Big) \geq 0\,.
\end{equation}
Firstly, as in the proof of Lemma 3.4 in \cite{SchThese}, for every $x\in \overline{\mathfrak{a}^+}$, $\xi\in \mathfrak{a}$ and $\lambda\in \mathfrak{a}^*$ we have
$$
\partial_\xi F_\lambda(x)=\frac{1}{|W|}\sum_{w\in W} (\lambda-\rho(m))(w\xi)G_\lambda(wx)
$$
(where $\xi$ is fixed and acts on the $x$-variable).
Here $G_\lambda(wx)\geq 0$ for $m\in \mathcal{M}_3$ (as in \cite{SchThese} for $m\in \mathcal{M}_+$).
Set $K_\xi=\max_{w\in W} (\rho(m)-\lambda)(w\xi)$. So
$K_\xi=-\min_{w\in W} (\lambda-\rho(m))(w\xi)$ and
$$
\partial_\xi F_\lambda(x)\geq -K_\xi \frac{1}{|W|}\sum_{w\in W} G_\lambda(wx)=-K_\xi
F_\lambda(x)\,.$$
Since $x\mapsto F_\lambda(x)$ and $\xi\mapsto K_\xi$ are $W$-invariant, the last inequality extends to all $x\in \mathfrak{a}$.
Hence, for every $x,\xi\in \mathfrak{a}$ and every $\lambda\in \mathfrak{a}^*$ we have
$$
\partial_\xi \big( e^{K_\xi \frac{\inner{\xi}{\cdot}}{|\xi|^2}} F_{\lambda}(m;\cdot) \big)
=e^{K_\xi \frac{\inner{\xi}{\cdot}}{|\xi|^2}} \Big( \partial_\xi F_\lambda+K_\xi F_\lambda\Big) \geq 0\,.
$$
The previous argument is essentially the one leading to (8) in \cite[Lemma 3.3]{SchThese}. 
\medskip

We pass from \eqref{eq:inequality} to the inequality \eqref{eq:subadd-Schapira} in Lemma \ref{lemma:subadd-Schapira} by using the following arguments, which are repeatedly used in \cite{SchThese}. This is why we say that Lemma \ref{lemma:subadd-Schapira} is implicit in \cite{SchThese}.

Let $\lambda \in \mathfrak{a}^*$ and $x\in \mathfrak{a}$ be fixed.  Set
$$f(t)=e^{K_\xi \frac{\inner{\xi}{x+t\xi}}{|\xi|^2}} F_{\lambda}(m;x+t\xi)\,.$$
By Lemma 3.2 in \cite{SchThese} and since
$$
f'_d(t)=\left.\frac{d}{dt}\right|_{t\searrow 0} f(t)=
\partial_\xi \Big( e^{K_\xi \frac{\inner{\xi}{\cdot}}{|\xi|^2}} F_{\lambda}(m;\cdot) \Big) (x)\geq 0\,,
$$
we conclude that $f(t)$ is increasing on $[0,+\infty[$ and hence
$$
e^{K_\xi \frac{\inner{\xi}{x}}{|\xi|^2}} e^{K_\xi t} F_{\lambda}(m;x+t\xi)=f(t)\geq f(0)=
e^{K_\xi \frac{\inner{\xi}{x}}{|\xi|^2}} F_{\lambda}(m;x)
$$
i.e.
\begin{equation}
\label{eq:inequality2}
e^{K_\xi t} F_{\lambda}(m;x+t\xi)\geq F_{\lambda}(m;x)
\end{equation}
for all $t\geq 0$.
Choose first $\xi=x_1\in \mathfrak{a}$ and $t=1$. Then
\eqref{eq:inequality2} gives
\begin{equation}
\label{half-inequalityLemma2.9-1}
e^{\max\limits_{w\in W} (\rho(m)-\lambda)(wx_1)} F_{\lambda}(m; x+x_1)
\leq F_\lambda(m; x)\,.
\end{equation}
Replace $x$ by $x+x_1$ and choose $\xi=-x_1$ and $t=1$. Then
\eqref{eq:inequality2} gives
\begin{equation}
\label{half-inequalityLemma2.9-2}
e^{-\min\limits_{w\in W} (\rho(m)-\lambda)(wx_1)} F_{\lambda}(m;x)
\geq F_\lambda(m; x+x_1)\,,
\end{equation}
as required.


\end{document}